\newtheorem{theorem}{Theorem}[section]
\newtheorem{definition}[theorem]{Definition}
\newtheorem{lemma}[theorem]{Lemma}
\newtheorem{proposition}[theorem]{Proposition}
\newtheorem{remark}[theorem]{Remark}
\newtheorem{example}[theorem]{Example}
\newcommand{\vanish}[1]{}\parskip=12pt
\newcommand{\sign}{\mbox{\rm sign}}
\def\p{\prime}
\def\b{\textbf{b}}
\numberwithin{equation}{section}
\begin{document}
\title{A Diagrammatic Approach for Determining the Braid Index of Alternating Links}
\author{Yuanan Diao$^\dagger$, Claus Ernst$^*$, Gabor Hetyei$^\dagger$ and Pengyu Liu$^\dagger$}
\address{$^\dagger$ Department of Mathematics and Statistics\\
University of North Carolina Charlotte\\
Charlotte, NC 28223}
\address{$^*$ Department of Mathematics\\
Western Kentucky University\\
Bowling Green, KY 42101, USA}
\email{}
%\thanks{$^Dag$supported by NSF grant DMS-0712958.
%$^\ast$supported by NSA grant \# H98230-07-1-0073} Dedicatory{}
\subjclass[2010]{Primary: 57M25; Secondary: 57M27}
\keywords{knots, links, braid index, alternating, Seifert graph.}

\begin{abstract}
It is well known that the braid index of a link equals the minimum number of Seifert circles among all link diagrams representing it. For a link with a reduced alternating diagram $D$, $s(D)$, the number of Seifert circles in $D$, equals the braid index $\b(D)$ of $D$ if $D$ contains no {\em lone crossings} (a crossing in $D$ is called a {\em  lone crossing} if it is the only crossing between two Seifert circles in $D$). If $D$ contains lone crossings, then $\b(D)$ is strictly less than $s(D)$. However in general it is not known how $s(D)$ is related to $\b(D)$.  In this paper, we derive explicit formulas for many alternating links based on any minimum projections of these links. As an application of our results, we are able to determine the braid index for any alternating Montesinos link explicitly (which include all rational links and all alternating pretzel links).  
\end{abstract} 

\maketitle
\section{Introduction}

\medskip
It is well known that any oriented link can be represented by the closure of a braid. The minimum  number of strands needed in a braid whose closure represents a given link is called the braid index of the link. Although it is difficult to determine the braid index of a link in general, although there have been some successes. Examples include the closed positive braids with a full twist (in particular the torus links)~\cite{FW}, the 2-bridge links and fibered alternating links~\cite{Mu}, and a new class of links discussed in a more recent paper \cite{Lee}. For more readings on related topics, interested readers can refer to \cite{Bir, Crom, El, MS, Na1, Sto}. 

\medskip
Of the main results concerning braid index of a link, two of them are of particular relevance and importance to our paper. The first one relates the braid index of an oriented link $L$ to any given link diagram of $L$, and the second one relates the braid index of $L$ to its HOMFLY polynomial. While we will defer the discussion of the HOMFLY polynomial and how it is related to the braid index of $L$ to the next section, we outline the other result here.
For any given oriented link diagram $D$, a crossing in it is either a positive or a negative crossing as shown in Figure \ref{fig:cross} (marked by $D_+$ and $D_-$ respectively, and the summation of these signs is called the {\em writhe} of $D$). If we smooth all such crossings so that the diagram at each crossing looks like the one shown in Figure \ref{fig:cross} marked by $D_0$, then we obtain a link diagram that contains topological circles that do not intersect each other. These are called {\em Seifert circles} and the collection of these Seifert circles is called the {\em Seifert circle decomposition} of $D$. It turns out that the Seifert circle decomposition of $D$ is closely related to its braid index. If the Seifert circles of $D$ are concentric to each other, then $D$ is already in a closed braid form hence the number of Seifert circles in $D$ clearly gives an upper bound for the braid index of $D$ in this case. In fact, Yamada \cite{Ya} showed that one can obtain a closed braid presentation of any link diagram $D$ from its Seifert circle decomposition with the same number of Seifert circles (number of strings in the closed braid) without changing the writhe of the diagram. It follows immediately that the braid index of a link equals the minimum number of Seifert circles among all link diagrams of the link. 

\begin{figure}[htb!]
\includegraphics[scale=.6]{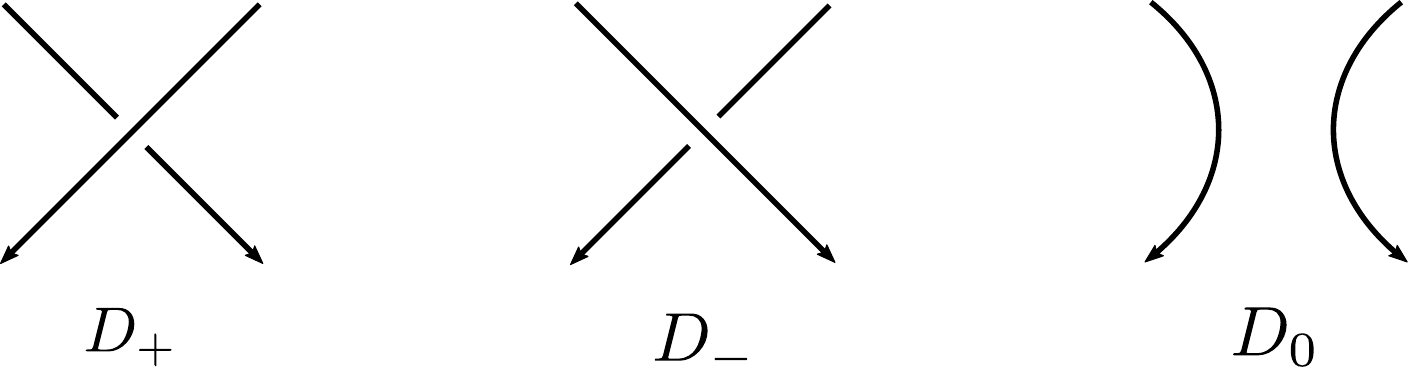}
\caption{The sign convention at a crossing of an oriented link and the splitting of the crossing: the crossing in $D_+$ ($D_-$) is positive (negative) and is assigned $+1$ ($-1$) in the calculation of the writhe of the link diagram.}
\label{fig:cross}
\end{figure}

\medskip
In this paper we are able to determine the braid index of many alternating links as an explicit function based on any minimum projection diagram of the link.
Let $D$ be a reduced alternating link diagram of some oriented alternating link $L$. A crossing in $D$ is called a {\em lone crossing} (or l-crossing for short) if it is the only crossing between two Seifert circles in $D$. If there are more than one crossings between two Seifert circles, then each of these crossings is called a {\em regular crossing} (or r-crossing for short). It is known \cite{DHL2017} that the braid index of $D$ equals the number of Seifert circles in $D$ if and only if $D$ contains no lone crossings. Thus, if $D$ contains no lone crossings, then we know its braid index is simply the number of Seifert circles in $D$. When $D$ contains lone crossings the problem of how to determine the braid index remains open in general, and this paper provides a solution for certain link families. Through careful examination of the structures of Seifert circle decompositions, we are able to explicitly determine the braid indices for many alternating links with simple formulas. In fact, our results are applicable to a very large class of links that include all alternating Montesinos links (which contain all two bridge links and alternating pretzel links).

\medskip
This paper is structured as follows. In Section~\ref{s2}, we introduce the HOMFLY polynomial and state several known results that will play key roles in our proofs later. In Section~\ref{s3} we introduce the concepts of {\em reduction numbers} and {\em base link diagrams}. In Section~\ref{s4}  we identify several classes of alternating link diagrams that are base link diagrams and show how a base link diagram can be obtained from another base link diagram. In
Section~\ref{s5}, we first derive the formulas that allow us to calculate the maximum and minimum powers of the variable $a$ in the HOMFLY polynomial for a rational link diagram based on a minimum projection of it. We then extend this result to derive a closed formula for the braid index of an alternating Montesinos link, that is also based only on a minimum projection of the Montesinos link. 

\section{Preparations and prior results}\label{s2}

\smallskip
For the sake of convenience, from this point on, when we talk about a
link diagram $D$, it is with the understanding that it is the link
diagram of some oriented link $L$. Since we will only be dealing with link
invariants such as braid index and the HOMFLY polynomial, it should not cause any confusion for
us to use $D$ as a link without mentioning $L$. Let $D_+$, $D_-$, and
$D_0$ be oriented link diagrams that coincide with each other except at a
small neighborhood of a crossing where the diagrams are presented as in
Figure~\ref{fig:cross}. We say the crossing presented in $D_+$ has a
{\em positive} sign and the crossing presented in $D_-$ has a {\em negative} sign. The following result appears in~\cite{Fr,Ja}.

\begin{proposition}\label{Ho}
There is a unique function that maps each oriented link diagram $D$ to a two-variable Laurent polynomial with integer coefficients $H(D,z,a)$ such that 
\begin{enumerate}
\item If $D_1$ and $D_2$ are ambient isotopic, then $H(D_1,z,a)=H(D_2,z,a)$.
\item $aH(D_+,z,a) - a^{-1}H(D_-,z,a) = zH(D_0,z,a)$. 
\item If $D$ is an unknot, then $H(D,z,a)=1$. 
\end{enumerate}
\end{proposition}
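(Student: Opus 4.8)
The statement is the existence and uniqueness of the two-variable HOMFLY polynomial, so the plan splits accordingly, with the uniqueness argument also producing the recursive recipe that the existence argument must then show is well defined.

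For uniqueness, the first step is to pin down the values on trivial links. Applying the skein relation (2) at a single crossing of a kinked diagram of the $(n-1)$-component unlink---so that both $D_+$ and $D_-$ represent $U_{n-1}$ while the oriented smoothing $D_0$ represents $U_n$---gives $(a-a^{-1})H(U_{n-1}) = zH(U_n)$, and together with axiom (3) this forces $H(U_n) = \delta^{\,n-1}$ where $\delta = (a-a^{-1})z^{-1}$. For an arbitrary diagram I would then induct on the lexicographically ordered pair $(c(D),u(D))$, where $c(D)$ is the number of crossings and $u(D)$ is the least number of crossings whose over/under information must be reversed to make $D$ descending with respect to a fixed ordering and choice of basepoints on its components. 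If $u(D)=0$ then $D$ is descending, hence ambient isotopic to an unlink, and (1) together with the preceding computation fixes $H(D)$; if $u(D)>0$, choosing a crossing that must be reversed lets me solve (2) for $H(D)$ in terms of the reversed diagram (same $c$, smaller $u$) and the smoothed diagram (smaller $c$), both already determined. Since any function obeying (1)--(3) is forced through the identical computation, two such functions must coincide.

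For existence, the plan is to promote this recursion to a definition and then prove it is well defined. Note first that the ring-theoretic part of the claim is automatic: the unlink values lie in $\Z[z^{\pm 1},a^{\pm 1}]$, and solving (2) for either $H(D_+)$ or $H(D_-)$ only multiplies previously computed values by $a^{\pm 2}$ and $a^{\pm 1}z$ and adds, so integrality and the Laurent form propagate with no division by $z$. What remains is (i) independence of the auxiliary data---the ordering of components, the basepoints, and the order in which crossings are reversed and smoothed---and (ii) invariance under the three oriented Reidemeister moves, which is exactly what upgrades the diagram function to the link invariant demanded by (1).

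The hard part is precisely this well-definedness. Because the recursion branches at every crossing, one cannot check the choices and the Reidemeister moves in isolation; the cleanest route is the simultaneous induction of Lickorish--Millett, proving for all diagrams with at most $n$ crossings that the recipe is independent of every choice and invariant under each move, and using the $n-1$ case to absorb the new crossings or the reordering created by a single local modification. I expect Reidemeister move III, together with the mixed-orientation instances of move II, to demand the bulk of the case analysis, since these are the configurations in which several branches of the recursion must be reconciled at once.
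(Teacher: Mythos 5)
The paper does not actually prove this proposition: it is quoted as a known result with the citation to Freyd--Yetter--Hoste--Lickorish--Millett--Ocneanu and Jaeger, so there is no in-paper argument to compare yours against. Measured against the standard proof in those references, your outline is the right one. The uniqueness half is essentially complete: the computation $H(U_n)=\bigl((a-a^{-1})z^{-1}\bigr)^{n-1}$ via a kinked diagram matches the remark the paper itself makes after the proposition, and the double induction on $\bigl(c(D),u(D)\bigr)$ with $u(D)$ the distance to a descending diagram is exactly the Lickorish--Millett argument; your observation that solving the skein relation never divides by $z$, so values stay in $\mathbf{Z}[z^{\pm1},a^{\pm1}]$, is a point worth making explicit and is correct. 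The existence half, however, is a plan rather than a proof: you correctly identify that the entire difficulty lies in showing the recursion is independent of the ordering of components, the basepoints, the order of crossing switches, and is invariant under the oriented Reidemeister moves, and you correctly name the simultaneous induction on crossing number that handles all of these at once---but you do not carry out any of that case analysis. For this particular theorem that verification \emph{is} the theorem; everything before it is bookkeeping. So your submission should be read as a correct and well-organized reduction of the statement to the Lickorish--Millett well-definedness lemma, not as a self-contained proof. If the intent is to cite that lemma, say so explicitly; if the intent is to prove the proposition from scratch, the mixed-orientation Reidemeister II and the Reidemeister III cases, and the independence-of-basepoint argument, all still need to be written out.
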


The Laurent polynomial $H(D,z,a)$ or $H(D)$  is called the {\em HOMFLY polynomial} or {\em HOMFLY-PT polynomial} 
of the oriented link $D$. The second condition in the proposition is
called the {\em skein relation} of the HOMFLY polynomial. With conditions (2) and (3) above, one can easily show that if  $D$ is a trivial link with $n$ connected components, then $H(D,z,a)=((a-a^{-1})z^{-1})^{n-1}$ (by applying these two conditions repeatedly to a simple closed curve with $n-1$ twists in its projection).
For our purposes, we will actually be using the following two equivalent forms of the skein relation:
\begin{eqnarray}
H(D_+,z,a)&=&a^{-2}H(D_-,z,a)+a^{-1}zH(D_0,z,a),\label{Skein1}\\
H(D_-,z,a)&=&a^2 H(D_+,z,a)-azH(D_0,z,a).\label{Skein2}
\end{eqnarray}

It is well known that 
\begin{equation}\label{connected_H}
H(D_1\#D_2)=H(D_1)H(D_2),
\end{equation} 
where $D_1\#D_2$ is the connected sum of the link diagrams $D_1$, $D_2$ and that 
\begin{equation}\label{mirro_H}
H(D,z,a)=H(D^\p,z,-a^{-1}),
\end{equation}
 where $D^\p$ is the mirror image of $D$. The following is a list of terms and notations used in this paper ($D$ stands for a link diagram). 

\noindent
\begin{itemize}
%\item   A {\em base link diagram}: a reduced alternating link diagram whose Seifert graph has no edges of weight one;
\item 
  $c(D)$: the number of crossings in $D$;
 \item   $c^-(D)$: the number of negative crossings in $D$.
\item 
  $s(D)$: the number of Seifert circles in $D$;
\item 
  $w(D)=c(D)-2c^{-}(D)$: the writhe of $D$;
\item 
  $E(D)$ and $e(D)$: the maximum and minimum powers of the variable $a$ in $H(D,z,a)$;
\item 
$E(P(z,a))$ and $e(P(z,a))$: the maximum and minimum powers of the variable $a$ in any Laurent polynomial $P(z,a)$;
\item 
  $H^h(D)$ and $H^\ell(D)$: the two terms corresponding to the highest and lowest powers of $a$ respectively when $H(D,z,a)$ is written as a polynomial of $a$;
\item   $p^h(D,z)$ and $p^\ell(D,z)$: the Laurent polynomials of $z$ serving as the coefficients of $a^{E(D)}$ and $a^{e(D)}$ in $H^h(D)$ and $H^\ell(D)$, respectively.
That is $H^h(D)=p^h(D,z)a^{E(D)}$ and $H^\ell(D)=p^\ell(D,z)a^{e(D)}$;
\item   $p_0^h(D)$ and $p_0^\ell(D)$: the highest power terms in $p^h(D,z)$ and $p^\ell(D,z)$ respectively. That is, $p_0^h(D)=b^hz^{{\rm deg}(p^h)}$ and $p_0^\ell(D)=b^\ell z^{{\rm deg}(p^\ell)}$ for some constants $b^h$ and $b^\ell$ respectively, where ${\rm deg}(p^h)$ and ${\rm deg}(p^l)$ denote the maximum power of $z$ in $p^h(D,z)$ and $p^\ell(D,z)$ respectively;
\item   $\sigma^+(D)$, $\sigma^-(D)$: the number of pairs of Seifert circles in $D$ that share multiple positive crossings and multiple negative crossings respectively;
\item We sometimes use the terms $p_0^h(P)$ and $p_0^\ell(P)$ where $P$ is not a diagram but a Laurent polynomial in the variables $a$ and $z$. Just as in the case of a diagram, our notation indicates the highest power terms in the $z$ variable of the Laurent polynomials of $z$ serving as the coefficients of the highest and lowest $a$ power in $P$.
\end{itemize}
 
The following result is well known:
\begin{theorem}\cite{Mo}
Let $D$ be any link diagram, then $E(D)\le s(D)-w(D)-1$ and $e(D)\ge -s(D)-w(D)+1$.
\end{theorem}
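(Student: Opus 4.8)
The plan is to prove only the upper bound $E(D)\le s(D)-w(D)-1$ and deduce the lower bound on $e(D)$ formally from the mirror relation \eqref{mirro_H}. If $D^\p$ is the mirror image of $D$, then every crossing changes sign, so $w(D^\p)=-w(D)$, while the Seifert circle decomposition is unaffected by reflection, so $s(D^\p)=s(D)$. In \eqref{mirro_H} the substitution $a\mapsto -a^{-1}$ carries the top $a$-degree of $H(D^\p)$ to the bottom $a$-degree of $H(D)$, whence $e(D)=-E(D^\p)$. Applying the upper bound to $D^\p$ then gives $e(D)=-E(D^\p)\ge -(s(D)+w(D)-1)=-s(D)-w(D)+1$, which is exactly the second inequality. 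So from here on I would concentrate solely on establishing $E(D)\le s(D)-w(D)-1$.

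The engine of the argument is the skein relation in its two rewritten forms \eqref{Skein1} and \eqref{Skein2}. The key structural observation is that in any skein triple $(D_+,D_-,D_0)$ the oriented smoothing at the distinguished crossing coincides with the Seifert smoothing, so all three diagrams share the same Seifert state: $s(D_+)=s(D_-)=s(D_0)=:s$. Their writhes instead satisfy $w(D_0)=w(D_+)-1=w(D_-)+1$. Writing $\bar E(D):=s(D)-w(D)-1$ for the target quantity, comparison of $a$-degrees in \eqref{Skein1} gives $E(D_+)\le\max(E(D_-)-2,\,E(D_0)-1)$, and a one-line check shows that if $E(D_-)\le\bar E(D_-)$ and $E(D_0)\le\bar E(D_0)$, then $E(D_+)\le\bar E(D_+)$; symmetrically, \eqref{Skein2} yields $E(D_-)\le\max(E(D_+)+2,\,E(D_0)+1)$, and the bounds for $D_+$ and $D_0$ propagate to $D_-$. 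Thus the bound for any one member of a skein triple follows from the bound for the other two.

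To convert this into a proof I would induct lexicographically on the pair $(c(D),d(D))$, where $d(D)$ is the number of crossings that must be switched to turn $D$ into a descending diagram with respect to a fixed choice of basepoints and traversal order. If $D$ has a bad crossing, switching it gives a diagram with the same $c$ but smaller $d$, while smoothing it gives a diagram with smaller $c$; both are lexicographically smaller, and depending on the sign of the bad crossing one of \eqref{Skein1}, \eqref{Skein2} propagates the bound to $D$. The base cases are the descending diagrams ($d(D)=0$): the crossingless unlink on $n$ components has $H=((a-a^{-1})z^{-1})^{n-1}$, so $E=n-1=s-w-1$ with equality, while a descending diagram carrying crossings, which represents an unlink, would be treated by reducing it to the trivial diagram through crossing-removing Reidemeister I and II moves and checking that $s(D)-w(D)$ never drops below $n$ along the way.

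I expect the main obstacle to be exactly the fact that the skein relation couples $D_+$ and $D_-$, which have the same crossing number, so a naive induction on $c(D)$ alone fails to terminate; this is what forces the secondary parameter $d(D)$ and the separate handling of descending diagrams. The most delicate bookkeeping lies in that descending base case, where one must control $s-w$ under Reidemeister reductions (an $R$-I move alters both $s$ and $w$, and an $R$-II move can alter $s$), whereas the rest of the argument reduces to the routine degree-counting in \eqref{Skein1} and \eqref{Skein2} summarized above.
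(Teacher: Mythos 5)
The paper does not actually prove this statement --- it is quoted verbatim from Morton \cite{Mo} --- so there is no in-paper argument to compare against and I am judging your proposal on its own terms. Your framework is the classical one and most of it is sound: the reduction of the $e(D)$ bound to the $E(D)$ bound via \eqref{mirro_H} is correct (under $a\mapsto -a^{-1}$ the top $a$-degree of $H(D^\p)$ becomes minus the bottom $a$-degree of $H(D)$, while $s(D^\p)=s(D)$ and $w(D^\p)=-w(D)$); the observation that a skein triple shares one Seifert state; the degree bookkeeping in \eqref{Skein1} and \eqref{Skein2}; and the lexicographic induction on $(c(D),d(D))$, which is exactly Morton's device. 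The crossingless base case is also fine.

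The genuine gap is the other base case, a totally descending diagram that still carries crossings, and it is a missing idea rather than missing bookkeeping. Such a diagram need not admit \emph{any} crossing-removing Reidemeister I or II move: a simplifying R-I move requires a monogon face of the underlying shadow and a simplifying R-II move requires a bigon face, and there are knot shadows with neither --- for instance the standard projection of the $(3,4)$-torus knot, the closure of $(\sigma_1\sigma_2)^4$, all of whose faces are triangles or quadrilaterals. Its descending resolution is a totally descending unknot diagram on which your proposed simplification cannot even begin, so the inequality $s(D)-w(D)\ge n$ for descending diagrams --- which is the real content of the theorem, the inductive step being routine --- is left unproven. (Your monotonicity accounting is correct as far as it goes: removing a kink or a coherent bigon never increases $s-w$; the failure is the existence of the reduction sequence, not the arithmetic along it. The existence of hard unknot diagrams should already warn you that monotone simplification is not something one gets for free.) The classical treatments close this case not with Reidemeister moves but with a further induction on $c(D)$ within the class of descending diagrams, typically by smoothing the first crossing met from the basepoint and tracking how $s$, $w$ and the number of components change --- switching that crossing yields another totally descending diagram for a relocated basepoint, so the smoothed diagram, which has one fewer crossing, carries the induction. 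You would need to supply an argument of this kind to complete the proof.
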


From the above theorem, it is immediate that $2s(D)-2\ge E(D)-e(D)$ hence $s(D)\ge (E(D)-e(D))/2+1$. It follows from the result of Yamada (as we mentioned in the introduction) that $\textbf{b}(D)\ge (E(D)-e(D))/2+1$. This last inequality is called Morton-William-Frank inequality (or MWF inequality for short). Clearly, if the equality in $s(D)\ge (E(D)-e(D))/2+1$ holds, then we must have $s(D)=\textbf{b}(D)$. This is the other important result that we mentioned in the introduction section.

\medskip
In the rest of this section, we will focus on the Seifert circle decomposition of an alternating link diagram $D$. Such a decomposition has a very special property as one can easily check: let $C$ be a Seifert circle in $D$, then the crossings that $C$ shares with other Seifert circles on one side of $C$ are either all positive or all negative, while the crossings that $C$ shares with other Seifert circles on the other side of $C$ have exactly the opposite sign. Thus if we smooth all positive crossings in $D$, we obtain a link diagram $D^{-}$ (with only negative crossings) that is still alternating, but may be consisting of disjoint link diagrams. Similarly, if we smooth all negative crossings in $D$, we obtain an alternating link diagram $D^{+}$ with only positive crossings and may be consisting of disjoint link diagrams. We will call these components the {$\partial^+S$-components} ({$\partial^-S$-components}) if they are obtained by smoothing all negative crossings (positive crossings)
of $D$, or just $\partial S$-components when there is no need to stress the signs. (The term $\partial S$ refers to ``partial Seifert circle decomposition" since only crossings of one sign are smoothed.) Notice that each $\partial S$-component is completely bounded within one Seifert circle with only one possible exception involving the unbounded region. (However there is no exception if we consider the diagram drawn on the 2-sphere.) The exception can be removed without creating any additional crossings if we use a diagram $D$ where the inside of an innermost Seifert circle becomes the unbounded region. 
See Figure \ref{Fig_ast} for an example of the three diagrams $D$, $D^{+}$ and $D^{-}$.
%{\color{red} Rerouting of a single strand may not fix this problem - several strands may be needed.}{\color{blue} No, only the one that contains an ``outmost" Seifert circle can have this exception so one rerouting will remove the exception.} 
\begin{figure}[htb!]
\includegraphics[scale=.5]{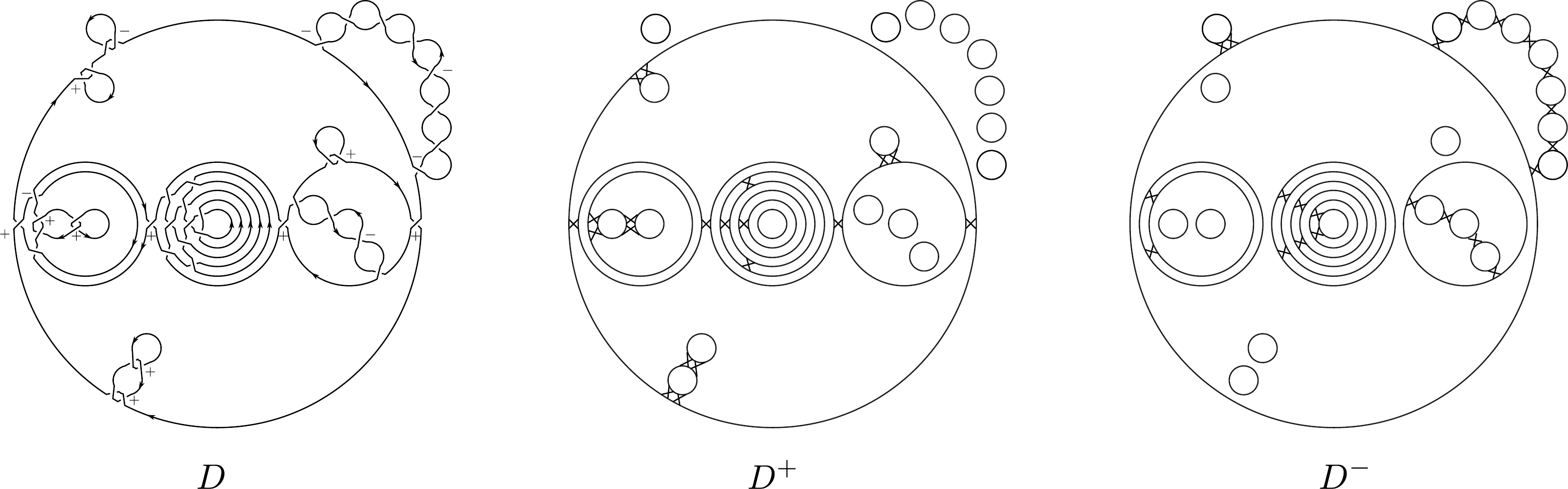} 
\caption{An alternating link diagram and its $\partial S$-components: the $\partial^+S$ components are in $D^+$ and the $\partial^-S$ components are in $D^-$. The Seifert circles in each component are connected by the remaining crossings. Notice that the $\partial^-S$-component containing the large Seifert circle is the exception where no Seifert circles in the component are contained in the interior of another in the component and it is apparent that a strand of this large Seifert circle can be rerouted (without causing any crossing changes) so the resulting (large) Seifert circle contains the other Seifert circles in this component. \label{Fig_ast}}
\end{figure}

\begin{definition}{\em 
A {\em cycle of Seifert circles} is a sequence of distinct Seifert circles $C_1$, $C_2$, ..., $C_n$ such that $C_j$ and $C_{j+1}$ share at least one crossing and $C_n$ and $C_1$ also share at least one crossing.}
\end{definition}

\begin{remark}{\em
In light of the above observations about the $\partial S$-components, and the fact that the strands over which two Seifert circles share crossings must be oriented in the same direction, we see that a cycle of Seifert circles can only occur within a $\partial S$-component hence all crossings involved must be of the same sign. Moreover the length of any cycle of Seifert circles must be even.}
\end{remark}

\begin{remark} {\em 
In \cite{M0} the concept of a {\em special} link diagram was introduced. Basically, a link diagram is called special if for every Seifert circle in the link diagram, either its interior or its exterior contains no other Seifert circles of the diagram. It is rather obvious that each $\partial S$-component of an alternating link diagram is a special link diagram.
}
\end{remark}

\section{Reduction numbers and base link diagrams}\label{s3}

Let us now consider a reduced alternating link diagram $D$ containing lone crossings. Then it is necessary that each lone crossing is part of a cycle of Seifert circles (otherwise the crossing is nugatory and $D$ is not reduced). In the case when $D$ consists of exactly one cycle of Seifert circles, such that the lone crossings occur in a consecutive manner, Figure \ref{Reduction} illustrates a systematic way to reduce the number of Seifert circles. 
The diagram on the left has eight Seifert circles. We now replace three short overpasses by the thin arcs as shown in Figure \ref{Reduction} on the left. This new diagram is isotopic to the original diagram and has only five Seifert circles as shown in Figure \ref{Reduction} on the right.

\begin{figure}[htb!]
\includegraphics[scale=.5]{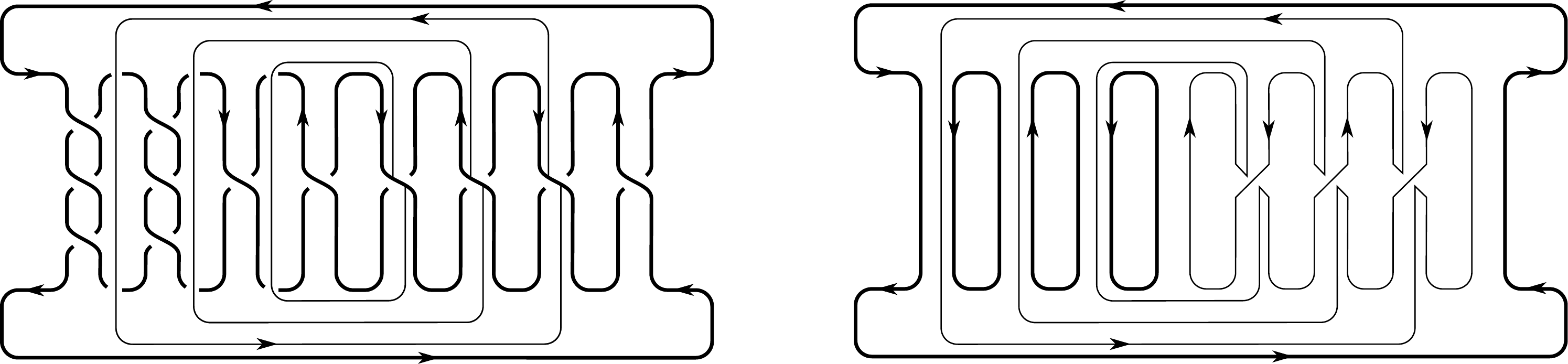}
\caption{A systematic way to reduce the number of Seifert circles in a cycle of Seifert circles. In this example, $2n=8$ is the length of cycle, $k=6$ is the number of lone crossings and the reduction number is $\min\{k,n-1\}=3$. Notice that in the figure on the right side 4 Seifert circles in the original diagram have been combined into one marked by the thin curve. \label{Reduction}}
\end{figure}

Let $2n$ be the length of the cycle and $k$ be the number of lone crossings. If the lone crossings are next to each other as shown in the figure, then in this particular case we see that we can reduce the number of Seifert circles in the diagram by $k$ if $k\le n-1$, and by $n-1$ if $k\ge n-1$. Thus we will call $\min\{n-1,k\}$ the (Seifert circle) {\em reduction number} associated to this string of lone crossings. With some modest effort, the reader can verify that (1) the same number of Seifert circles can be reduced even if the lone crossings in the cycle are not consecutive; (2) a similar operation can still be carried out if some of the Seifert circles in the cycle also share crossings with other Seifert circles in the same $\partial S$-component resulting in the same reduction number (of course, if there are two cycles in the same $\partial S$-component both containing lone crossings, then the reduction operations on one cycle may prevent the operations on the other);
 (3) this operation only involves Seifert circles in the $\partial S$-component containing the cycle. We can extend this concept to a general reduced alternating link diagram as follows: define $r^-(D)$ to be the maximum number of Seifert circles that can be reduced by rerouting strands at negative lone crossings, and define $r^+(D)$ to be the maximum number of Seifert circles that can be reduced by rerouting strands at positive lone crossings of $D$, and define $r(D)=r^-(D)+r^+(D)$ to be the (Seifert circle) reduction number of $D$. Our goal is to identify reduced alternating link diagrams $D$ satisfying the condition 
\begin{eqnarray}
E(D)&=&s(D)-w(D)-1-2r^-(D),\label{E1}\\
e(D)&=&-s(D)-w(D)+1+2r^+(D).\label{e1}
\end{eqnarray}
For if (\ref{E1}) and (\ref{e1}) hold, then we will have $s(D)-r(D)=(E(D)-e(D))/2+1\le \textbf{b}(D)$. However by the definition of $r(D)$ we also have $s(D)-r(D)\ge \textbf{b}(D)$. Thus for link diagrams satisfying (\ref{E1}) and (\ref{e1}), we have $s(D)-r(D)= \textbf{b}(D)$. 

\begin{remark}\label{r_est}{\em 
Throughout the paper, the reduction numbers are established as follows for a reduced alternating diagram $D$: First we identify $r^-(D)$ and $r^+(D)$ by showing that we can reduce the number of Seifert circles in $D$ by these numbers in the $\partial^-S$-components and $\partial^+S$-components of $D$ respectively. Then we establish that that (\ref{E1}) and (\ref{e1}) hold, that is $E(D)=s(D)-w(D)-1-2r^-(D)$ and $e(D)=-s(D)-w(D)+1+2r^+(D)$
for $r^-(D)$ and $r^+(D)$.}
\end{remark}

\begin{remark}{\em
The rerouting move used to reduce the number of Seifert circles in a link diagram at a lone crossing as described in the above is well known and is sometimes referred to as an M-P move \cite{MP}. A graph index based on the Seifert graph of a link diagram was introduced in \cite{MP} which is essentially the same as the reduction number defined here. Since we have chosen to use a diagrammatic approach in this paper, we have decided to adopt the reduction number terminology to avoid the technical details of graph theory which are unnecessary in this paper.
}
\end{remark}

\begin{remark}{\em
In general, it may be difficult to determine $r^-(D)$ and $r^+(D)$ directly. In this paper,  $r^-(D)$ and $r^+(D)$ are determined in the following way (which of course does not always apply). First we demonstrate that we can reduce the number of Seifert circles by some $r^-_0$ and $r^+_0$ in the $\partial^-S$-components and $\partial^+S$-components of $D$ respectively, then we establish that
$E(D)=s(D)-w(D)-1-2r_0^-$ and $e(D)=-s(D)-w(D)+1+2r_0^+$. The MFW inequality then implies that $\b(D)=s(D)-(r_0^-+r_0^+)$, which in turn implies that $r_0^-=r^-(D)$ and $r_0^+=r^+(D)$. Because of this, throughout the rest of the paper, we often do not distinguish $r^-_0$, $r^+_0$ and $r^-(D)$, $r^+(D)$, knowing that at the end they will be the same once we establish the equalities $E(D)=s(D)-w(D)-1-2r_0^-$ and $e(D)=-s(D)-w(D)+1+2r_0^+$.
}
\end{remark}

\begin{definition}{\em 
Let $D$ be a reduced alternating link diagram. We say that $D$ is a {\em base link diagram} if it satisfies (\ref{E1}) and (\ref{e1}).}
\end{definition}

The following theorem states that base link diagrams are additive under the connected sum operation.

\begin{theorem}\label{connectedsumtheorem}
The reduction numbers are additive under the connected sum operation and the connected sum of two base link diagrams is again a base link diagram.
\end{theorem}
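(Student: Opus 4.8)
The plan is to follow the two-step method of Remark~\ref{r_est}: first exhibit numbers $r_0^-$ and $r_0^+$ by which the Seifert circles of $D_1\# D_2$ can be reduced, then verify that (\ref{E1}) and (\ref{e1}) hold for $D_1\# D_2$ with these numbers, and finally invoke the MWF inequality to conclude simultaneously that the reduction is maximal (hence the reduction numbers are additive) and that $D_1\#D_2$ is a base link diagram. Throughout write $D=D_1\#D_2$, which is again a reduced alternating diagram since the connecting band creates no nugatory crossing and preserves the alternating pattern.

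First I would record how the basic quantities behave under $\#$. As the connecting band introduces no new crossing and respects orientation, $c(D)=c(D_1)+c(D_2)$, $c^-(D)=c^-(D_1)+c^-(D_2)$, and hence $w(D)=w(D_1)+w(D_2)$. The connected sum fuses exactly one Seifert circle of $D_1$ with one of $D_2$ into a single circle $C$ and leaves all others unchanged, so $s(D)=s(D_1)+s(D_2)-1$. For the extreme $a$-powers I would use (\ref{connected_H}): since the ring of Laurent polynomials in $z$ over $\Z$ is an integral domain, the coefficient of $a^{E(D_1)+E(D_2)}$ in $H(D_1)H(D_2)$ is the nonzero product $p^h(D_1,z)\,p^h(D_2,z)$, so no cancellation occurs and $E(D)=E(D_1)+E(D_2)$; the same argument gives $e(D)=e(D_1)+e(D_2)$.

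Next I would establish the reducibility claim with $r_0^-:=r^-(D_1)+r^-(D_2)$ and $r_0^+:=r^+(D_1)+r^+(D_2)$. The key geometric observation is that the band joins $D_1$ and $D_2$ at a single point bearing no crossing, so every cycle of Seifert circles of $D$ lies entirely in $D_1$ or entirely in $D_2$; in particular the merge creates no new cycle, and the lone crossings of $D$ are exactly those of $D_1$ together with those of $D_2$. Hence the rerouting moves realizing $r^\pm(D_1)$ and $r^\pm(D_2)$ in the $\partial S$-components of the two summands act on disjoint sets of crossings and strands (they involve the merged circle $C$ only along disjoint arcs lying on the two sides of the band), so all of them may be performed at once on $D$. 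This reduces the Seifert circles of $D$ by $r_0^-$ in its $\partial^-S$-components and by $r_0^+$ in its $\partial^+S$-components.

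Finally I would combine these facts. Using the additivity relations together with (\ref{E1}) for $D_1$ and $D_2$,
\begin{align*}
E(D)&=E(D_1)+E(D_2)\\
&=\big(s(D_1)+s(D_2)\big)-\big(w(D_1)+w(D_2)\big)-2-2\big(r^-(D_1)+r^-(D_2)\big)\\
&=s(D)-w(D)-1-2r_0^-,
\end{align*}
and the parallel computation with (\ref{e1}) gives $e(D)=-s(D)-w(D)+1+2r_0^+$. Thus (\ref{E1}) and (\ref{e1}) hold for $D$ with the exhibited reduction numbers, so the MWF inequality yields $\b(D)=s(D)-(r_0^-+r_0^+)$, which forces $r_0^-=r^-(D)$ and $r_0^+=r^+(D)$. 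This simultaneously proves additivity of the reduction numbers and that $D$ is a base link diagram. I expect the main obstacle to be the geometric bookkeeping in the reducibility step — showing that no cycle of Seifert circles runs across the connecting band, so that the lone crossings and reduction moves of the two summands stay independent and non-interfering at the merged circle $C$ — together with the verification that the connected sum drops the Seifert circle count by exactly one; the rest follows formally from (\ref{connected_H}) and the MWF machinery.
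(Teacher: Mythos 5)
Your proposal is correct and follows essentially the same route as the paper's proof: superadditivity of the reduction numbers from performing the reductions in each summand separately, additivity of $E$ and $e$ from $H(D_1\#D_2)=H(D_1)H(D_2)$, the bookkeeping identities $s(D_1\#D_2)=s(D_1)+s(D_2)-1$ and $w(D_1\#D_2)=w(D_1)+w(D_2)$, and the MWF inequality to force all the inequalities into equalities. The only differences are cosmetic: you spell out the no-cancellation argument for the extreme $a$-powers and the disjointness of the rerouting moves, where the paper simply observes that the reductions can be carried out before the diagrams are joined.
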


\begin{proof}
Let $D_1$ and $D_2$ be two base link diagrams. It is obvious that $r^-(D_1\#D_2)\ge r^-(D_1)+r^-(D_2)$ and $r^+(D_1\#D_2)\ge r^+(D_1)+r^+(D_2)$ since the reduction can be performed on each diagram first before they are connected. It follows that 
\begin{eqnarray*}
\textbf{b}(D_1\#D_2)&\le &s(D_1\#D_2)-(r^-(D_1\#D_2)+r^+(D_1\#D_2))\\
&\le & s(D_1\#D_2)-(r^-(D_1)+r^-(D_2)+r^+(D_1)+r^+(D_2))\\
&=& s(D_1\#D_2)-(r(D_1)+r(D_2)).
\end{eqnarray*}
On the other hand,  $E(D_1\#D_2)=E(D_1)+E(D_2)$ and $e(D_1\#D_2)=e(D_1)+e(D_2)$ since $H(D_1\#D_2)=H(D_1)H(D_2)$. By the facts that $s(D_1\#D_2)=s(D_1)+s(D_2)-1$, $w(D_1\#D_2)=w(D_1)+w(D_2)$ and that $D_1$, $D_2$ are base link diagrams, we have
\begin{eqnarray*}
E(D_1\#D_2)&=& s(D_1\#D_2)-w(D_1\#D_2)-1-2(r^-(D_1)+r^-(D_2)),\\
e(D_1\#D_2)&=& -s(D_1\#D_2)-w(D_1\#D_2)+1+2(r^+(D_1)+r^+(D_2)).
\end{eqnarray*} 
Thus $E(D_1\#D_2)-e(D_1\#D_2)=2s(D_1\#D_2)-2-2(r(D_1)+r(D_2))$ hence $s(D_1\#D_2)-(r(D_1)+r(D_2))=(E(D_1\#D_2)-e(D_1\#D_2))/2+1\le \textbf{b}(D_1\#D_2)$ by the MWF inequality.
Combining this with the other inequality established earlier, we see that $r^-(D_1\#D_2)+r^+(D_1\#D_2)= r^-(D_1)+r^-(D_2)+ r^+(D_1)+r^+(D_2)$. Since $r^-(D_1\#D_2)\ge r^-(D_1)+r^-(D_2)$ and $r^+(D_1\#D_2)\ge r^+(D_1)+r^+(D_2)$, we must have $r^-(D_1\#D_2)= r^-(D_1)+r^-(D_2)$ and $r^+(D_1\#D_2)= r^+(D_1)+r^+(D_2)$. Hence $D_1\#D_2$ is a base link diagram.
\end{proof}

\section{Several families of base link diagrams}\label{s4}

In this section, we introduce several families of base link
diagrams. While these link families are already quite large themselves,
we can use them as building blocks to construct even more base link
diagrams in other constructions that are additional to the connected sum
operation. We shall demonstrate this in the next section. The names of
these link diagrams are not significant at the moment, they will be
needed in Section \ref{s5} and some of reasons for these names will
become clear. 

\medskip
\subsection{Type A link diagrams}

\smallskip
A {\em Type A link diagram} is an alternating link diagram that contains no lone crossings. The reduction numbers $r^+$ and $r^-$ are both zero. This type of link diagrams qualify for a base link diagram since (\ref{E1}) and (\ref{e1}) hold due to the following theorem. The additional details in the statement of the theorem are needed in the proofs of theorems stated later in this paper.

\begin{theorem}\label{main_lemma}\cite{DHL2017}
If $D$ is a Type A base link diagram, then $E(D)= s(D)-w(D)-1$ and $e(D)= -s(D)-w(D)+1$. Furthermore, the degree and sign of $p_0^h(D,z)$ are $c(D)-2\sigma^-(D)-s(D)+1$ and $(-1)^{c^-(D)}$ respectively. On the other hand, the degree and sign of $p_0^\ell(D,z)$ are $c(D)-2\sigma^+(D)-s(D)+1$ and $(-1)^{c^-(D)+s(D)-1}$ respectively.  
\end{theorem}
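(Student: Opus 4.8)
The plan is to prove the two halves of the statement — the one about $E(D)$, $p_0^h(D,z)$ and the one about $e(D)$, $p_0^\ell(D,z)$ — by a single argument, using the mirror relation (\ref{mirro_H}) to deduce the second from the first. Passing to the mirror image $D^\p$ sends $a\mapsto -a^{-1}$, so that $e(D)=-E(D^\p)$ and the lowest-$a$ coefficient of $H(D)$ is, up to the sign $(-1)^{E(D^\p)}$, the highest-$a$ coefficient of $H(D^\p)$. Since mirroring also interchanges positive and negative crossings (hence $c^-(D^\p)=c(D)-c^-(D)$, $w(D^\p)=-w(D)$, $\sigma^{\pm}(D^\p)=\sigma^{\mp}(D)$) while preserving $s$, a direct bookkeeping check turns the $E$-statement for $D^\p$ into the $e$-statement for $D$. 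So I would only prove that $E(D)=s(D)-w(D)-1$ and that $p_0^h(D,z)$ has degree $c(D)-2\sigma^-(D)-s(D)+1$ and sign $(-1)^{c^-(D)}$.

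For this I would induct on $c(D)$, the base case being the unknot, where every quantity is checked by hand. Since $D$ has no lone crossings, every pair of Seifert circles that meets does so in a twist region of $m\ge 2$ crossings of one sign; I would isolate such a region carrying positive crossings and apply the skein relation (\ref{Skein1}) at one of them. The crossing change followed by a Reidemeister~II move turns the $m$-crossing region into an $(m-2)$-crossing region $D_{m-2}$, while the oriented smoothing leaves an $(m-1)$-crossing region $D_{m-1}$. Both have the same Seifert circles as $D$, so $s(D_{m-1})=s(D_{m-2})=s(D)$, and their writhes drop by $1$ and $2$ respectively, so (\ref{Skein1}) reads $H(D)=a^{-2}H(D_{m-2})+a^{-1}zH(D_{m-1})$.

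The engine of the induction is a degree count in the top-$a$ coefficient. Using the inductive formula $E(D_k)=s(D)-w(D_k)-1$ one finds that both summands $a^{-2}H(D_{m-2})$ and $a^{-1}zH(D_{m-1})$ have maximal $a$-degree equal to $s(D)-w(D)-1$, so the Morton bound is attained once cancellation of the leading terms is ruled out. Ruling this out is exactly where the degree formula enters: comparing $\deg p^h(D_{m-2})$ with $1+\deg p^h(D_{m-1})$ (the extra $z$ coming from the factor $z$ in the second summand), and using $\sigma^-(D_{m-1})=\sigma^-(D_{m-2})=\sigma^-(D)$ since the region is positive, the inductive degree formula shows the smoothed term strictly dominates in $z$-degree. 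Hence there is no cancellation, $E(D)=s(D)-w(D)-1$, and the leading coefficient obeys the clean recursion $p_0^h(D,z)=z\,p_0^h(D_{m-1},z)$; reading off degree and sign and using $c^-(D_{m-1})=c^-(D)$ then reproduces the claimed degree $c(D)-2\sigma^-(D)-s(D)+1$ and sign $(-1)^{c^-(D)}$.

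The main obstacle is that this reduction does not stay inside the class of Type~A diagrams: when $m=2$ the smoothing $D_{m-1}=D_1$ produces a lone crossing (and $D_{m-2}=D_0$ may split off a trivial component or disconnect the diagram), and even when $m=3$ the term $D_{m-2}=D_1$ carries a lone crossing. A crude appeal to the Morton inequality does not dispose of these terms, because the lone crossing created is positive and therefore lowers only the $e$-side, leaving the $E$-side bound potentially sharp. I expect the real work to lie in controlling these auxiliary diagrams: either by strengthening the inductive hypothesis to a class that also allows positive lone crossings — tracking that such crossings affect $\sigma^+$, not $\sigma^-$, and hence neither lower $E$ below $s-w-1$ nor raise $\deg p^h$ above $c-2\sigma^--s+1$ — or by treating the all-multiplicity-two diagrams as a separate base case, peeling off a ``leaf clasp'' of two crossings, which by a direct computation (using the split-union value of $H$ for $D_0$ and Reidemeister~I invariance for $D_1$) removes one Seifert circle and reduces to a smaller Type~A diagram. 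Verifying the degree and sign bookkeeping for these lone-crossing intermediaries, so that the non-cancellation argument still applies, is the crux.
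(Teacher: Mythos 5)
First, note that this paper does not prove Theorem \ref{main_lemma} at all: it is quoted from \cite{DHL2017}, where it is established by a global resolving-tree argument (the ``Algorithm P'' and ``Algorithm N'' referred to in Remark \ref{positive_remark} and in the proof of Theorem \ref{cycle_theorem}), in which the whole diagram is resolved into a tree of descending/ascending unlink leaves and exactly one leaf is shown to contribute the extremal $a$-power with the extremal $z$-degree. Your mirror-image reduction of the $e$/$p_0^\ell$ half to the $E$/$p_0^h$ half is correct (the sign bookkeeping closes because $w(D)=c(D)-2c^-(D)$ forces $(-1)^{w(D)+c(D)-c^-(D)}=(-1)^{c^-(D)}$), and your non-cancellation mechanism --- the smoothed branch beats the switched branch by $z$-degree $2$, giving $p_0^h(D)=z\,p_0^h(D_{m-1})$ --- is the right engine. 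But as a proof the proposal has two genuine gaps.

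The first you name yourself: the skein recursion exits the class of Type A diagrams (for $m=2,3$ the branches acquire positive lone crossings), and neither proposed repair is carried out. Repair (a) amounts to proving the full theorem for the strictly larger class of alternating diagrams whose lone crossings are all positive --- essentially the content of Part 1 of the proof of Theorem \ref{cycle_theorem} plus a degree-and-sign statement for $p_0^h$ on that larger class which you do not formulate, and which is exactly as delicate as the original claim. Repair (b) fails outright in general: a Type A diagram in which every adjacent pair of Seifert circles shares exactly two crossings need not have a ``leaf clasp'' to peel (take a cycle of four Seifert circles, each consecutive pair sharing two crossings); there is no guaranteed base case of that shape. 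The second gap is unacknowledged: two Seifert circles of an alternating diagram sharing $m\ge 2$ crossings need not share them along a single twist region, since other Seifert circles may be attached between consecutive shared crossings on either side; in that situation the switched diagram admits no Reidemeister II simplification, is non-alternating, and is therefore inaccessible to your induction hypothesis. The resolving-tree approach of \cite{DHL2017} is designed precisely to avoid both problems, since its leaves are trivial links whose contributions are computed directly rather than by recursion through intermediate alternating diagrams.
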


%\medskip
\subsection{Type B link diagrams} Let us start with a definition.

\begin{definition}{\em 
Let $D_1$, $D_2$ be two disjoint oriented link diagrams and $C_1$, $C_2$ be two Seifert circles in $D_1$ and $D_2$ respectively. Let $T_m$ be an oriented elementary torus link with parallel orientation and $m\ge 1$ crossings. We say that $C_1$ is {\em properly attached} to $C_2$  with $m$ crossings if the following hold:

(i) The attachment forms the  link $D_1\#T_m\#D_2$.

(ii) The connected sum operation of $T_m$ with $D_1$ and $D_2$ involves a subarc of $C_1$ and $C_2$ respectively, that is the Seifert circles $C_1$ and $C_2$ now meet along the $m$ crossings of $T_m$

(iii) The connected sum operation respects the orientations of  $C_1$, $C_2$ and $T_m$.

%If $D_1\#T_m\#D_2$ can be realized using a strand each from $C_1$ and $C_2$, then we say that $C_1$ is {\em properly attached} to $C_2$. Of course, $C_1$ and $C_2$ must be properly oriented in order for this to happen.
}
\end{definition}

Let $D_j$, $1\le j\le 2n$ ($n\ge 2$) be disjoint Type A link diagrams and $C_j$ be a Seifert circle in $D_j$ ($1\le j\le 2n$) that is not contained in the interior of any other Seifert circle in $D_j$. If we can properly attach each $C_j$ to $C_{j+1}$ with $m_j$ (multiple) crossings  ($1\le j\le 2n$ with $C_{2n+1}=C_1$) without creating any other crossings, then the resulting new link diagram $D$ is called a {\em Type B link diagram}. Since the original link diagrams $D_j$ are of Type A, all lone crossings in a Type B link diagram must lie on the cycle of Seifert circles $C_1C_2\cdots C_{2n}$.
It is necessary in this case that all crossings used for the attachment operation to create a Type B link diagram have the same sign. Moreover all these crossings and belong to the same $\partial S$-component of $D$. See Figure \ref{TypeB} for an illustration of a Type B link diagram.

\begin{figure}[htb!]
\includegraphics[scale=1]{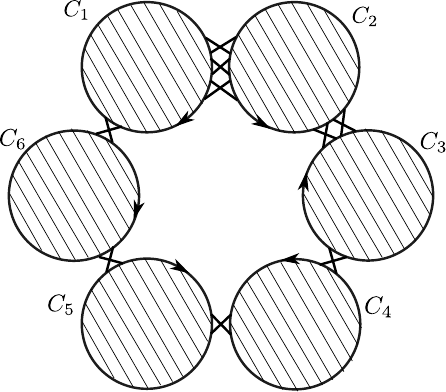}\qquad \includegraphics[scale=.85]{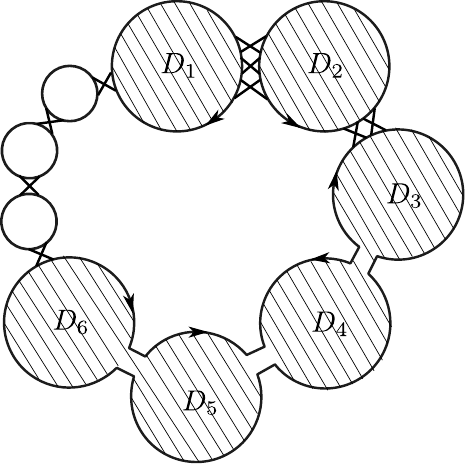}
\caption{Left: A Type B link diagram $D$ with $n=3$. Each $D_j$ is not necessarily bounded within $C_j$, the $C_j$'s are shaded in the figure only to indicate that they are part of the diagram $D_j$'s; Right: The re-arranged diagram $D$ after the flype moves, where $D_1^\p=D_1$, $D_2^\p=D_2$ and $D_3^\p=D_3\#D_4\#D_5\#D_6$. \label{TypeB}}
\end{figure}

\medskip
\begin{theorem}\label{cycle_theorem}{
Let $D$ be a Type B link diagram, then $D$ is a base link diagram. Furthermore, if the cycle of Seifert circles used to define $D$ has length $2n$ and $k$ lone crossings, then $r^-(D)=\min\{k,n-1\}$ and $r^+(D)=0$ if the lone crossings are of negative sign and $r^+(D)=\min\{k,n-1\}$ and $r^-(D)=0$ if the lone crossings are of positive sign.}
\end{theorem}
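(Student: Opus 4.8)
```latex
\textbf{Proof proposal.} The plan is to verify that a Type B link diagram $D$ satisfies the two defining equations (\ref{E1}) and (\ref{e1}) of a base link diagram, and to identify the reduction numbers $r^-(D)$ and $r^+(D)$ along the way, following the strategy laid out in Remark~\ref{r_est}. Without loss of generality I would assume the attaching crossings on the cycle $C_1C_2\cdots C_{2n}$ are all negative (the positive case follows by taking the mirror image and invoking (\ref{mirro_H}), which swaps the roles of $E$ and $e$ and of $r^-$ and $r^+$). Since all lone crossings then lie in a single $\partial^-S$-component and form one cycle, the reduction discussion surrounding Figure~\ref{Reduction} already shows that we can reduce the number of Seifert circles by exactly $\min\{k,n-1\}$ at the negative lone crossings, so I would set $r_0^-=\min\{k,n-1\}$ and $r_0^+=0$. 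The core task is then to establish $E(D)=s(D)-w(D)-1-2r_0^-$ and $e(D)=-s(D)-w(D)+1$, after which the MWF inequality and the argument in the third Remark of Section~\ref{s3} force $r^-(D)=r_0^-$, $r^+(D)=0$, and make $D$ a base link diagram.

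The first step is to compute the HOMFLY polynomial of $D$ in terms of the pieces $D_1,\dots,D_{2n}$, which are all Type A, so their extremal $a$-powers and the corresponding $p_0^h$, $p_0^\ell$ data are given by Theorem~\ref{main_lemma}. The diagram $D$ is built by properly attaching the $C_j$ along elementary torus tangles $T_{m_j}$ with parallel orientation; the flype rearrangement indicated in Figure~\ref{TypeB} lets me regroup the constituent diagrams so that the cycle structure is transparent. I would then repeatedly apply the skein relations (\ref{Skein1}) and (\ref{Skein2}) at the $m_j$ attaching crossings of each $T_{m_j}$: smoothing a crossing either merges two Seifert circles (changing $s$ and the multiplicity along an edge of the cycle) or splits off a term governed by a shorter cycle. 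Because the orientations are parallel and all attaching crossings share one sign, the recursion on the number of attaching crossings is clean, and I expect the extremal $a$-powers to be controlled by tracking, through each skein step, how $s(D)$, $w(D)$ and the highest/lowest $z$-coefficients evolve. The connected-sum multiplicativity (\ref{connected_H}) handles the $D_j$ factors, reducing everything to understanding the polynomial contributed by the cycle of $2n$ multiply-attached Seifert circles.

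The heart of the matter, and the step I expect to be the main obstacle, is showing that no cancellation destroys the extremal terms when the skein relations are iterated around the cycle. Both (\ref{Skein1}) and (\ref{Skein2}) introduce factors of $a^{\pm 2}$ and $a^{\pm 1}z$, so each smoothing can shift $a$-powers in competing directions; to pin down $E(D)$ and $e(D)$ exactly one must verify that the leading (for $E$) and trailing (for $e$) contributions survive rather than cancel against one another. Here the sign and degree information in Theorem~\ref{main_lemma} for the Type A pieces is essential: knowing the sign of $p_0^h$ is $(-1)^{c^-}$ and that of $p_0^\ell$ is $(-1)^{c^-+s-1}$ lets me check that the surviving terms accumulate with matching signs and therefore do not cancel. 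The reason the reduction is $\min\{k,n-1\}$ and not simply $k$ enters precisely at this point: once $n-1$ of the lone crossings have been used to merge Seifert circles around the cycle, the remaining circles form a single multiply-attached pair and further lone-crossing reductions are blocked, which is mirrored by the $a$-power shift saturating at $2(n-1)$. I would organize the induction on the total number of attaching crossings $\sum_j m_j$, using the base case of a single elementary torus link $T_m$ (whose extremal powers are computed directly from the skein relation) and carefully bookkeeping the change in $s(D)$ at each merge to confirm that the exponent formulas (\ref{E1}) and (\ref{e1}) hold with $r^-=\min\{k,n-1\}$ and $r^+=0$.
```
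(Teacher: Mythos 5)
Your overall shape is right---reduce to one sign by (\ref{mirro_H}), read off $r_0^\pm$ from the Figure~\ref{Reduction} rerouting, flype the lone crossings together, and run a skein induction using Theorem~\ref{main_lemma} for the Type~A pieces---and you have correctly identified cancellation of extremal terms as the crux. But your proposed remedy for that crux does not work, and this is a genuine gap. You plan to track signs of $p_0^h$ and $p_0^\ell$ through the whole resolving tree using Theorem~\ref{main_lemma}; however, that theorem only gives sign and degree data for \emph{Type~A} diagrams, while the intermediate diagrams produced by iterating (\ref{Skein1}) and (\ref{Skein2}) around the cycle are generally not Type~A (they still contain lone crossings on a shorter cycle). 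With exponentially many summands whose extremal $a$-powers shift by $\pm 2$ or $\pm 1$ at each step, sign bookkeeping alone cannot rule out that some branch contributes a strictly lower $a$-power than the claimed $e(D)$, nor that several branches attaining the claimed power cancel. The paper's proof avoids this entirely with Lemma~\ref{lowerbound_lemma}: by resolving all crossings of one sign one obtains a sum $\sum W(D^j)H(D^j)$ over \emph{positive} diagrams, and applying the MWF inequality to each $D^j$ (together with the observation that the rerouting moves survive smoothing and flipping) yields the a priori bound $e(D)\ge -s(D)-w(D)+1+2r^+(D)$. Once that one-sided bound is in hand, one only needs to exhibit a \emph{single} term in a partial resolution attaining the bound, and to check it has strictly larger $z$-degree than every other term attaining the same $a$-power---a local, finite verification rather than global cancellation control. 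Your proposal contains no substitute for this lemma.

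A second, smaller omission: your single induction on $\sum_j m_j$ with the heuristic that ``the $a$-power shift saturates at $2(n-1)$'' does not capture why the answer is $\min\{k,n-1\}$. In the paper the regimes $k<n-1$, $k\ge n$, and $k=n-1$ require genuinely different arguments (Cases 1--3 of Part~2): the branching at an $m$-fold attachment produces two representative diagrams $D_1^{max}$ (circles detach, lone crossings become nugatory) and $D_2^{max}$ (a lone crossing survives, lengthening the lone-crossing count), and \emph{which} of the two realizes the extremal $a$-power switches between the regimes---in Case~1 it is $D_1^{max}$, in Case~2 it is $D_2^{max}$, and in the boundary case $k=n-1$ both attain it and one must compare $z$-degrees to see they do not cancel. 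Any correct proof has to confront this trichotomy explicitly; your outline would need to be restructured around it. The ``easy'' side ($E(D)=s(D)-w(D)-1$ in the positive case), by contrast, is handled exactly as you suggest, by a short induction on $k$ in which both skein branches reduce to Type~A diagrams.
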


Let us point out that in the theorems of this article, whenever there are two symmetric cases, one involving the positive crossings and the other involving the negative crossings, we will always only give the proof for the positive case. The case for the negative crossings can be obtained from the positive one by (\ref{mirro_H}). The reader needs to keep this in mind when reading the proofs: in the proofs we always assume that we are dealing with the positive case, even though in the statement of the theorems we mention both cases. 

\medskip
Before we proceed to prove the theorem, let us state and prove the following lemma. This lemma will also be needed later in the next section.

\begin{lemma}\label{lowerbound_lemma}
Let $D$ be a reduced alternating link diagram with reduction numbers $r^-(D)$ and $r^+(D)>0$, then we have $E(D)\le s(D)-w(D)-1-2r^-(D)$ and $e(D)\ge -s(D)-w(D)+1+2r^+(D)$. 
\end{lemma}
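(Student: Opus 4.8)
The plan is to establish the two inequalities by relating the diagram $D$ to the reduced diagram obtained after performing the rerouting (M-P) moves at the lone crossings, and then combining the MWF inequality with the crossing-smoothing skein relations. Since the statement is symmetric under mirror image via (\ref{mirro_H}), I would only prove the claim $e(D)\ge -s(D)-w(D)+1+2r^+(D)$ involving the positive lone crossings; the bound on $E(D)$ follows by applying this to the mirror diagram $D^\p$ and using that $E(D)=-e(D^\p)$, $s(D)=s(D^\p)$, $w(D)=-w(D^\p)$, and $r^-(D)=r^+(D^\p)$.

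The key idea is that the rerouting move at a positive lone crossing does not change the underlying link type, so the HOMFLY polynomial is unchanged, while it reduces the number of Seifert circles. First I would let $\widetilde{D}$ be the diagram obtained from $D$ by rerouting at the positive lone crossings so as to reduce the Seifert circle count by exactly $r^+(D)$. Since rerouting is an isotopy, $H(\widetilde{D})=H(D)$, and in particular $e(\widetilde{D})=e(D)$. The diagram $\widetilde{D}$ has $s(\widetilde{D})=s(D)-r^+(D)$ Seifert circles, and the rerouting preserves the writhe (no crossing changes are made, only strands are rerouted), so $w(\widetilde{D})=w(D)$. Applying Morton's theorem (the cited result giving $e(D)\ge -s(D)-w(D)+1$) to $\widetilde{D}$ then yields $e(D)=e(\widetilde{D})\ge -s(\widetilde{D})-w(\widetilde{D})+1 = -(s(D)-r^+(D))-w(D)+1$, which is weaker than what is claimed by a factor in front of $r^+(D)$. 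The claimed bound has $+2r^+(D)$, so a naive application of Morton's bound only gives half the improvement, and I expect this gap to be the main obstacle.

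To close this gap I would examine more carefully what the rerouting does at the level of the writhe bookkeeping. When $r^+(D)$ Seifert circles are absorbed into a single large circle by rerouting, the $r^+(D)$ positive lone crossings that were being rerouted must be accounted for; the correct comparison is not simply Morton's bound applied blindly but rather tracking how the quantity $-s-w+1$ changes. The resolution is that each rerouting both decreases $s$ by one and, crucially, the crossings involved contribute to $w$ in a way that effectively shifts the bound by $2$ per reduction rather than $1$. Concretely, I would argue that after rerouting the relevant positive crossings become nugatory-type features that can be removed, decreasing both $s(D)$ by $r^+(D)$ and $c^-$, $w$ appropriately so that the combined shift in $-s(D)-w(D)+1$ is exactly $+2r^+(D)$. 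The cleanest route is likely to reduce each single reduction step to the local picture in Figure \ref{Reduction} and verify the shift of $2$ directly there, then iterate.

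Alternatively, and perhaps more robustly, I would bypass the writhe subtleties by invoking the definition of the reduction number together with the already-available inequality $\textbf{b}(D)\le s(D)-r(D)$. By the MWF inequality $\textbf{b}(D)\ge (E(D)-e(D))/2+1$, so $(E(D)-e(D))/2+1\le s(D)-r^-(D)-r^+(D)$, giving $E(D)-e(D)\le 2s(D)-2-2r^-(D)-2r^+(D)$. Combining this single inequality with Morton's two separate bounds $E(D)\le s(D)-w(D)-1$ and $e(D)\ge -s(D)-w(D)+1$ should let me solve for the individual refined bounds on $E(D)$ and $e(D)$; the difficulty here is that one inequality in the sum does not by itself separate into the two desired one-sided bounds, so I would need the hypothesis $r^+(D)>0$ and a parity or extremal-coefficient argument to force the reduction to be distributed as claimed between the high and low ends. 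I anticipate that making this separation rigorous, rather than just bounding the difference $E(D)-e(D)$, is where the real work lies, and I would handle it by treating the positive and negative $\partial S$-components independently so that $r^+$ and $r^-$ genuinely decouple.
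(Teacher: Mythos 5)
Your proposal does not close the key gap, and both of the routes you sketch stall at exactly the point where the real content of the lemma lies. In your first route you assert that the rerouting preserves the writhe, $w(\widetilde{D})=w(D)$, and correctly observe that Morton's bound applied to $\widetilde{D}$ then yields only $e(D)\ge -s(D)-w(D)+1+r^+(D)$, half the claimed improvement. Your proposed fix is to argue that the writhe in fact shifts by the right amount --- but this contradicts the premise you just stated, and you never establish what actually happens: each rerouting at a positive lone crossing eliminates that positive crossing and creates new crossings only in cancelling pairs, so $w$ drops by $1$ together with $s$, and only then does Morton's bound on the rerouted diagram give the full $-s(D)-w(D)+1+2r^+(D)$. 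That writhe computation (including the case of nested reroutings as in Figure \ref{Reduction}, where later detour arcs cross earlier ones) is the entire proof in this approach, and it is left as a hope rather than carried out. Your second route you correctly diagnose yourself: MWF applied to $D$ bounds only the difference $E(D)-e(D)$, and nothing in Morton's two one-sided bounds tells you how the $2r(D)$ improvement is distributed between the top and the bottom of the $a$-spread. ``A parity or extremal-coefficient argument'' and ``treating the $\partial S$-components independently'' are placeholders for the missing idea, not a proof.

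The paper's proof supplies precisely the separation mechanism you are missing, and it is worth seeing why it works. One resolves every \emph{negative} crossing of $D$ via \eqref{Skein2}, writing $H(D,z,a)=\sum W(D^j)H(D^j)$ where every $D^j$ is a \emph{positive} diagram and $W(D^j)=(-z)^{m_j}a^{w(D^j)-w(D)}$. For a positive diagram the top of the $a$-spread is not merely bounded but \emph{pinned}: $E(D^j)=s(D^j)-w(D^j)-1$ exactly (Remark \ref{positive_remark}). Since smoothing or flipping negative crossings does not disturb the reductions available at the positive lone crossings, $\textbf{b}(D^j)\le s(D^j)-r^+(D)$, and MWF applied to each $D^j$ gives $e(D^j)\ge E(D^j)+2-2\textbf{b}(D^j)\ge -s(D)-w(D^j)+1+2r^+(D)$; the weight $W(D^j)$ shifts this back to the desired bound on $e(D)$. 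The factor of $2$ comes from MWF ($E-e\le 2\textbf{b}-2$) acting on diagrams whose value of $E$ is known exactly, so the entire braid-index deficit is forced onto the $e$ side --- exactly the one-sided separation your second route could not achieve. If you want to rescue your first route instead, you must prove the writhe shift $w(\widetilde{D})=w(D)-r^+(D)$ for the full sequence of M-P moves; note that this is forced by consistency with Theorem \ref{cycle_theorem} (where $E(D)=s(D)-w(D)-1$ even when $r^+(D)>0$), so your initial assertion that the writhe is preserved cannot stand.
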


\begin{remark}\label{positive_remark}
{\em
Before we proceed to prove Lemma \ref{lowerbound_lemma}, let us make the following observation. In the case when a link diagram $D$ (not necessarily alternating) has only positive crossings (called a {\em positive diagram} in the literature), if we apply Algorithm P as defined in  \cite{DHL2017} to $D$, then it is not hard to see that $E(D)=s(D)-w(D)-1$ with a single leaf vertex in the resulting resolving tree obtained by smoothing all crossings in $D$ contributing uniquely to $p^h(D,z)$. Similarly, if $D$ contains only negative crossings, then we have $e(D)=-s(D)-w(D)+1$.}
\end{remark}

\begin{proof} We will first resolve $D$ by applying (\ref{Skein2}) to every negative crossing in $D$. This leads us to 
\begin{equation}\label{H_exp_0}
H(D,z,a)=\sum W(D^j)H(D^j)
\end{equation} 
where each $D^j$ is a positive link diagram and its weight $W(D^j)$ is a
monomial obtained from the powers of the $a$ and $z$ variables in (\ref{Skein2}).
If $D^j$ is obtained by smoothing $m_j$ negative crossings in $D$ and flipping the rest of the negative crossings (there are $c^-(D)-m_j$ of them) then $W(D^j)=(-az)^{m_j}\cdot a^{2(c^-(D)-m_j)}=(-z)^{m_j}a^{w(D^j)-w(D)}$. By the Remark \ref{positive_remark}, we have $E(D^j)=s(D^j)-w(D^j)-1=s(D)-w(D^j)-1$. Since smoothing or flipping a negative crossing does not affect how we reduce the number of Seifert circles using the original positive lone crossings in $D$, we can still reduce the number of Seifert circles in $D^j$ by at least $r^+(D)$. Hence $\textbf{b}(D^j)\le s(D^j)-r^+(D)$. By the MWF inequality $E(D^j)-e(D^j)\le 2(\textbf{b}(D^j)-1)$, we have 
$
e(D^j)
\ge 
E(D^j)+2-2\textbf{b}(D^j)
\ge s(D^j)-w(D^j)-1+2-2s(D^j)+2r^+(D)
=-s(D)-w(D^j)+1+2r^+(D).$
It follows that $e(W(D^j)H(D^j))\ge w(D^j)-w(D)+(-s(D)-w(D^j)+1+2r^+(D))=-s(D)-w(D)+1+2r^+(D)$. This shows that the lowest $a$ power of each summand $W(D^j)H(D^j)$ in $\sum W(D^j)H(D^j)$ is at least $-s(D)-w(D)+1+2r^+(D)$, therefore $e(D)\ge -s(D)-w(D)+1+2r^+(D)$ as desired. The inequality $E(D)\le s(D)-w(D)-1-2r^-(D)$ can be proven analogously and is left to the reader.
\end{proof}

\begin{proof} (of Theorem \ref{cycle_theorem}.) As we mentioned before, we will only give the proof for the case where the crossings involved in the attachment operations are positive. Let $k$ be the number of lone crossings in $D$. Using flype moves, we can arrange the lone crossings in $D$ as a single group of $k$ half-twists. That is $D$ can be re-arranged into a diagram that is obtained by attaching a string of $k$ lone crossings to $C_1^\p$ and $C_{2n-k+1}^\p$ in $D_1^\p\#T_1\#D_2^\p\#\cdots\#T_{q}\#D_{2n-k+1}^\p=\hat{D}\#T_1\#T_2\#\cdots\#T_{2n-k}$, where each $D_j^\p$ is either one of the original $D_i$'s or a connected sum of some of them (which is a Type A link diagram), each $T_j$ is an elementary torus link with $m_j\ge 2$ crossings, and the $C_i^\p$ are a subset of the original Seifert circles $C_1,C_2,\cdots ,C_{2n}$.
We denote
 $\hat{D}=D_1^\p\#D_2^\p\#\cdots\#D_{2n-k+1}^\p=D_1\#D_2\#\cdots\# D_{2n}$, and assume that $C_1^\p$ and $C_{2n-k+1}^\p$ are Seifert circles in $D_1^\p$ and $D_{2n-k+1}^\p$ respectively, see Figure \ref{TypeB}.

Part 1. In this part we will show that $E(D)=s(D)-w(D)-1$. We will do this by induction on $k$, the number of lone crossings. If $k=0$, $D$ is a Type A link diagram and the statement is true by Theorem \ref{main_lemma}. If $k=2$, apply (\ref{Skein1}) to one of the two lone crossings. For $D_-$, a Reidemeister II move reduces it to a Type A link diagram $\tilde{D}_-$ with $s(\tilde{D}_-)=s(D)-2$, $w(\tilde{D}_-)=w(D)-2$. For $D_0$, a Reidemeister I move reduces it to the Type A link diagram $\tilde{D}_0=\hat{D}\#T_1\#T_2\#\cdots\#T_{2n-2}$ with $s(\tilde{D}_0)=s(D)-1$ and $w(\tilde{D}_-)=w(D)-2$. It follows that
$-2+E(D_-)=-2+E(\tilde{D}_-)=-2+s(D)-2-(w(D)-2)-1=s(D)-w(D)-3$ and $-1+E(D_0)=-1+E(\tilde{D}_0)=-1+s(D)-1-(w(D)-2)-1=s(D)-w(D)-1$. Thus we have $E(D)=s(D)-w(D)-1$ and $p_0^h(D)=zp_0^h(\tilde{D}_0)$.
If $k=1$, wlog let us assume the lone crossing is between $C_1$ and $C_{2n}$, and we will use induction on $q$, the number crossings between $C_1$ and $C_2$. We note that the case $q=1$ corresponds to the case $k=2$ dealt with previously. If $q=2$, apply (\ref{Skein1}) to one of the two crossings between $C_1$ and $C_2$. $D_-$ reduces to $\tilde{D}_0$ in the above case $k=2$ and $D_0$ is $D$ in the case $k=2$ (or $q=1$) already discussed. We have $-2+E(D_-)=-2+s(\tilde{D}_0)-w(\tilde{D}_0)-1=-2+s(D)-(w(D)-2)-1=s(D)-w(D)-1$ with $p_0^h(a^{-2}H(D_-))= p_0^h(\tilde{D}_0)$, and $-1+E(D_0)=-1+s(D)-(w(D)-1)-1=s(D)-w(D)-1$ with $p_0^h(a^{-1}zH({D}_0))=z^2p_0^h(\tilde{D}_0)$. It follows that $E(D)=s(D)-w(D)-1$ with $p_0^h(D)=z^2p_0^h(\tilde{D}_0)$. Assume that for some $q_0\ge 2$ we have $E(D)=s(D)-w(D)-1$ with $p_0^h(D)=z^qp_0^h(\tilde{D}_0)$ for any $q$ such that $1\le q\le q_0$. Then for $q=q_0+1$, $D_-$ corresponds to the case $q=q_0-1$ and $D_0$ corresponds to the case $q=q_0$ so the induction hypothesis applies. It is then straightforward to check that we have $E(D)=s(D)-w(D)-1$ with $p_0^h(D)=z^{q_0+1}p_0^h(\tilde{D}_0)$. This complete the proof for the case of $k=1$. 

Now assume that for some $k_0\ge 2$, the statement $E(D)=s(D)-w(D)-1$ holds for any $k$ such that $1\le k\le k_0$ and let us consider the case $k=k_0+1$. Again apply (\ref{Skein1}) to one of the lone crossings. $D_-$ corresponds to the case $k=k_0-1$ and $D_0$ reduces to a Type A link diagram $\tilde{D}^\p_0$, which is similar to $\tilde{D}_0$ as discussed in the case of $k=2$. We have $s({D}_-)=s(D)-2$, $w(\tilde{D}_-)=w(D)-2$, $s(\tilde{D}^\p_0)=s(D)-k_0$ and $w(\tilde{D}^\p_0)=w(D)-k_0-1$. It follows that $-2+E(D_-)=-2+s(D)-2-(w(D)-2)-1=s(D)-w(D)-3-2r^-(D)$ and $-1+E(D_0)=-1+s(D)-k_0-(w(D)-k_0-1)-1=s(D)-w(D)-1$. Thus $E(D)=s(D)-w(D)-1$ as desired.

\begin{figure}[htb!]
\includegraphics[scale=.45]{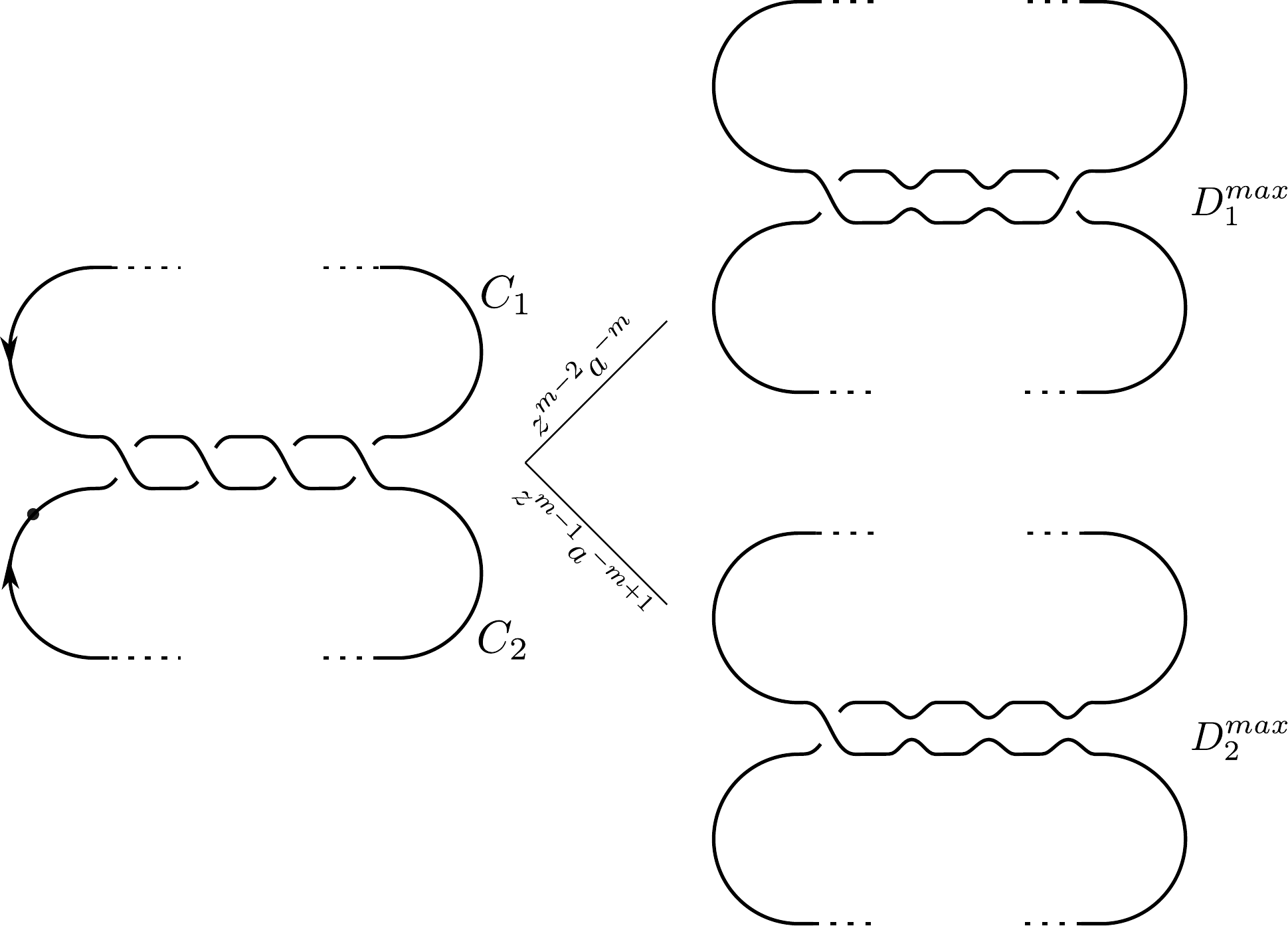}
\caption{Branching at the m-crossings between two Seifert circles corresponding to two adjacent vertices in a cycle. Shown in the figure are the representatives (one from each group) with the maximum number of crossings smoothed. }
\label{Case1Figure}
\end{figure}

\smallskip
Part 2. In this part we show that $e(D)=-s(D)-w(D)+1+2r^+(D)$. A big difference between the proof presented here and the proof in \cite{DHL2017} is that we will not be examining $H(D,z,a)$ using a complete resolving tree. Instead, we will only take several steps in that branching process using the same branching algorithm. Let $D^j$ be one of the link diagrams at the end of our branching process and let $\tilde{D}^j$ be the reduced link diagram of $D^j$. Let $W(D^j)=z^{t(D^j)}a^{w(D^j)-w(D)}$ be the combined weights through the Skein relation path leading to $D^j$ (where $t(D^j)$ is the number of crossings being smoothed along this path). Similar to (\ref{H_exp_0}), by (\ref{Skein1}) we have 
\begin{equation}\label{H_exp}
H(D,z,a)=\sum W(D^j)H(D^j)=\sum W(D^j)H(\tilde{D}^j). 
\end{equation} 
By Lemma \ref{lowerbound_lemma}, $e(D)\ge -s(D)-w(D)+1+2r^+(D)$. Thus we are done if we can demonstrate that one of the terms in the above summation attains $a^p$ as its lowest $a$ power where $p=-s(D)-w(D)+1+2r^+(D)$. Moreover among all the terms in the summation that have the same $a^p$ power, there is one term with the highest $z$ power in its corresponding $p_0^\ell$ function.
 
Case 1. $k<n-1$ so $r^+(D)=k$. If $k=0$ then $D$ is a Type A link diagram and the statement holds. For $k\ge 1$, let $C_1$ and $C_2$ be two Seifert circles in $D$ sharing $m\ge 2$ crossings. Figure \ref{Case1Figure} is an illustration of the situation with $m=4$. We will choose a starting point as shown in Figure  \ref{Case1Figure} and apply Algorithm N as defined in  \cite{DHL2017}. This means we will apply either the ascending or the descending algorithm, whichever does not allow smoothing nor flipping at the first  crossing encountered. For example in the case of Figure \ref{Case1Figure}, the ascending algorithm will be applied. We will end our branching process when we have traversed all crossings between $C_1$ and $C_2$. At this point the ${D}^j$'s fall into one of the following two groups. In the first group $C_1$ and $C_2$ detaches (via Type II Reidemeister moves) and all  lone crossings in $D^j$ become nugatory hence are removed in $\tilde{D}^j$, while in the second group $C_1$ and $C_2$ remains Seifert circles in  $\tilde{D}^j$ that share a lone crossing. Thus the $\tilde{D}^j$'s in the first group equal to the same Type A link diagram $\tilde{D}_1^{max}=\hat{D}\#T_2\#\cdots\#T_{2n-k}$ and $\tilde{D}^j$'s in the second group equal to the same link diagram $\tilde{D}_2^{max}$ which is similar to $D$ but with $k+1\le n-1$ lone crossings. The $D^j$ in each corresponding group with the maximum number of crossings smoothed is illustrated in Figure \ref{Case1Figure} which we will denote by $D_1^{max}$ and $D_2^{max}$ respectively. 

For $D_1^{max}$, we have $W(D_1^{max})=z^{m-2}a^{-m}$, $s(\tilde{D}_1^{max})=s(D)-k$, $w(\tilde{D}_1^{max})=w(D)-k-m$. For $D_2^{max}$, we have $W(D_2^{max})=z^{m-1}a^{-m+1}$, $s(\tilde{D}_2^{max})=s(D)$ and $w(\tilde{D}_2^{max})=w(D)-m+1$. 
Since $\tilde{D}_1^{max}$ is a Type A base link diagram, we have 
\begin{eqnarray*}
e(W(D_1^{max})H(D_1^{max}))&=&-m+e(\tilde{D}_1^{max})\\
&=&-m+1-s(\tilde{D}_1^{max})-w(\tilde{D}_1^{max})\\
&=&-m+1-s(D)+k-w(D)+k+m\\
&=&-s(D)-w(D)+1+2r^+(D)\ {\rm{and}}\\
p_0^\ell(W(D_1^{max})H(D_1^{max}))&=&z^{m-2}p_0^\ell(\tilde{D}_1^{max}).
\end{eqnarray*}
We observe that in the above if we replace two smoothings with the flipping of  a single crossing then we obtain the same diagram. Moreover in the term $W(D_1^{max})$ the power of $a$ remains unchanged, however the power of $z$ will be reduced by one. Thus there are multiple terms containing the same lowest $a$ power however the one obtained by the maximal number of smoothings exhibits the largest power of $z$.

On the other hand, $\tilde{D}_2^{max}$ contains $k+1\le n-1$ lone crossings, so $r^+(\tilde{D}_2^{max})=k+1$ hence by Lemma \ref{lowerbound_lemma}, $e(\tilde{D}_2^{max})\ge 1+2(k+1)-w(\tilde{D}_2^{max})-s(\tilde{D}_2^{max})=2+m+2r^+(D)-w(D)-s(D)$, thus $e(W(D_2^{max})H(D_2^{max}))\ge 3+2r^+(D)-w(D)-s(D)$.
Combining the two cases, we see that $W(D_1^{max})H(D_1^{max})$ is the unique term in the summation on the right side of Equation (\ref{H_exp}) making a contribution to the lowest $a$ power term in $H(D,z,a)$ with the highest $z$ degree in $p_0^\ell(D)$ which equals $z^{m-2}p_0^\ell(\tilde{D}_1^{max})=(-1)^{s(D)+c^-(D)}z^{c(D)-2\sigma^+(D)-s(D)+1}$. One should compare this with the similar result in Theorem \ref{main_lemma}. This proves the case $k<n-1$.

\smallskip
Case 2. $k\ge n$ and $r^+(D)=n-1$. We will use induction on $q=2n-k$. We have $n+1\ge q\ge 0$.  For $q=0$ (so $k=2n$), $\hat{D}^n=D$ is obtained by attaching a string of $2n$  lone crossings to the same Seifert circle in $\hat{D}$. Notice that the statement is true for $\hat{D}^1$  ($n=1$) since it is obtained by adding a Seifert circle to $\hat{D}$ that shares two crossings with a Seifert circle in $\hat{D}$ hence is still a Type A link diagram. 
The case $n\ge 2$ can then be proven inductively on $n$ in a manner similar to the case of $k\ge 2$ in Part 1. Furthermore, $p_0^\ell(D)=p_0^\ell(\hat{D}^1)$. The details are left to the reader. For $q=1$, let $C_1$, $C_2$ and $D_1^{max}$, $D_2^{max}$ be as described in Case 1 and assume that there are $m_1\ge 2$ crossings between $C_1$ and $C_2$. By similar discussion as in Case 1, we have $W(D_1^{max})=z^{m_1-2}a^{-m_1}$, $s(\tilde{D}_1^{max})=s(D)-k$ ($k=2n-1$), $w(\tilde{D}_1^j)=w(D)-k-m_1$, $W(D_2^{max})=z^{m_1-1}a^{-m_1+1}$, $s(\tilde{D}_2^{max})=s(D)$, $w(\tilde{D}_2^{max})=w(D)-m_1+1$ and $r^+(\tilde{D}_2^{max})=r^+(D)=n-1$. Thus
\begin{eqnarray*}
e(W(D_1^{max})H(D_1^{max}))&=&-m_1+e(\tilde{D}_1^{max})\\
&=&-m_1+1-s(\tilde{D}_1^{max})-w(\tilde{D}_1^{max})\\
&=&-m_1+1-s(D)+k-w(D)+k+m_1\\
&=&-s(D)-w(D)+1+2k
\\
&> & -s(D)-w(D)+1+2r^+(D),
\end{eqnarray*}
and 
\begin{eqnarray*}
e(W(D_2^{max})H(D_2^{max}))&=&-m_1+1+e(\tilde{D}_2^{max})\\
&=&-m_1+1-s(\tilde{D}_2^{max})-w(\tilde{D}_2^{max})+1+2r^+(\tilde{D}_2^{max})\\
&=&-m_1+1-s(D)-(w(D)-m_1+1)+1+2r^+(D)\\
&=&-s(D)-w(D)+1+2r^+(D).
\end{eqnarray*}
Thus $e(D)=-s(D)-w(D)+1+2r^+(D)$ with $p_0^\ell(D)=z^{m_1-1}p_0^\ell(\tilde{D}_2^{max})=z^{m_1-1}p_0^\ell(\hat{D}^1)$ since $\tilde{D}_2^{max}$ reduces to $\hat{D}^1$. Repeating the above argument, we can similarly show that
in general, $1\le q\le n$ (namely $k\ge n$), with $m_1\ge 2$, ..., $m_q\ge 2$ being the number of corresponding m-crossings, we have $e(D)=-s(D)-w(D)+1+2r^+(D)$ and $p_0^\ell(D)=z^{\sum_{1\le j\le q}(m_j-1)}p_0^\ell(\hat{D}^1)$. In particular, for $q=n$, we have $p_0^\ell(D)=z^{\sum_{1\le j\le n}(m_j-1)}p_0^\ell(\hat{D}^1)$.

\smallskip
Case 3. Let us now consider the last case $q=n+1$ (or $k=n-1$). Let $C_1$, $C_2$ and $D_1^{max}$, $D_2^{max}$ be as defined before and assume that there are $m_{1}\ge 2$ crossings between $C_1$, $C_2$. By the discussion earlier, we now have
$e(W(D_1^{max})H(D_1^{max}))=-s(D)-w(D)+1+2r^+(D)$ with $p_0^\ell(W(D_1^{max})H(D_1^{max}))=z^{m_{1}-2}p_0^\ell(\tilde{D}_1^{max})$ where in this case $\tilde{D}_1^{max}=\hat{D}\#T_2\#\cdots\#T_{n}$. It follows that  $p_0^\ell(W(D_1^{max})H(D_1^{max}))=(-1)^nz^{1+\sum_{1\le j\le n+1}(m_j-3)}p_0^\ell(\hat{D})$ by applying Theorem \ref{main_lemma} to the individual connected sum component $T_j$. On the other hand, we also have $e(W(D_2^{max})H(D_2^{max}))=-s(D)-w(D)+1+2r^+(D)$, but with $p_0^\ell(W(D_2^{max})H(D_2^{max}))=z^{m_{1}-1}p_0^\ell(\tilde{D}_2^{max})$. Since $\tilde{D}_2^{max}$ corresponds to the case of $q=n$, the result in Case 2 above applies to it. Hence $p_0^\ell(W(D_2^{max})H(D_2^{max}))=z^{\sum_{1\le j\le n+1}(m_j-1)}p_0^\ell(\hat{D}^1)$. By the construction of $\hat{D}^1$, we have $s(\hat{D}^1)=s(\hat{D})+1$, $c(\hat{D}^1)=c(\hat{D})+2$, $c^-(\hat{D}^1)=c^-(\hat{D})$ and $\sigma^+(\hat{D}^1)=\sigma^+(\hat{D})+1$. Since $\hat{D}^1$ is also a Type A link diagram, it follows that $p_0^\ell(\hat{D}^1)=-z^{-1}p_0(\hat{D})$ by Theorem \ref{main_lemma}. Thus $p_0^\ell(W(D_2^{max})H(D_2^{max}))=-z^{-1+\sum_{1\le j\le n+1}(m_j-1)}p_0^\ell(\hat{D})$. Since $n\ge 2$, $-1+\sum_{1\le j\le n+1}(m_j-1)>1+\sum_{1\le j\le n+1}(m_j-3)$. Thus $e(D)=-s(D)-w(D)+1+2r^+(D)$ and $p_0^\ell(D)=-z^{-1+\sum_{1\le j\le n+1}(m_j-1)}p_0^\ell(\hat{D})$.
\end{proof}

\subsection{Type M base link diagrams}

\smallskip
A {\em Type M link diagram} is an alternating link diagram $D$  obtained by attaching $m\ge 2$ strings of Seifert circles to two Seifert circles $C_1$ and $C_2$ belonging to two disjoint Type A link diagrams $D_1$ and $D_2$ such that all crossings involved in each string are positive (negative) lone crossings with one exception as depicted in Figure~\ref{TypeM}. That is, the Seifert circle in the string that is attached to $C_1$ may share multiple positive (negative) crossings. Notice that the case $m=2$ results in a base link diagram of Type B. Thus the case of interest is $m\ge 3$. Depending on the orientations of $C_1$ and $C_2$, the strings  will either all contain an even number of Seifert circles or all contain an odd number of Seifert circles, see Figure~\ref{TypeM}. 
We will call these two types of link diagrams by {\em Type M1} if $C_1$ and $C_2$ are parallel (so each string contains an even number of Seifert circles) or {\em Type M2} if $C_1$ and $C_2$ are antiparallel (so each string contains an odd number of Seifert circles). The two Seifert circles $C_1$ and $C_2$ used for this construction are called the {\em anchor Seifert circles}. We should point out that, strictly speaking, in the case when two or more strings in a Type M1 link diagram contain only one crossing each, then these crossings are actually not lone crossings since they are now multiple crossings between $C_1$ and $C_2$.

\begin{figure}[htb!]
\includegraphics[scale=.475]{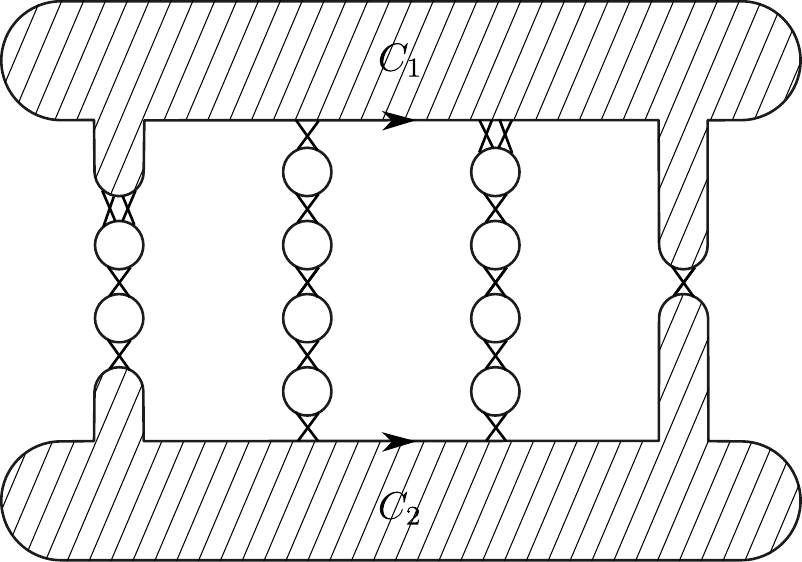}\qquad \includegraphics[scale=.54]{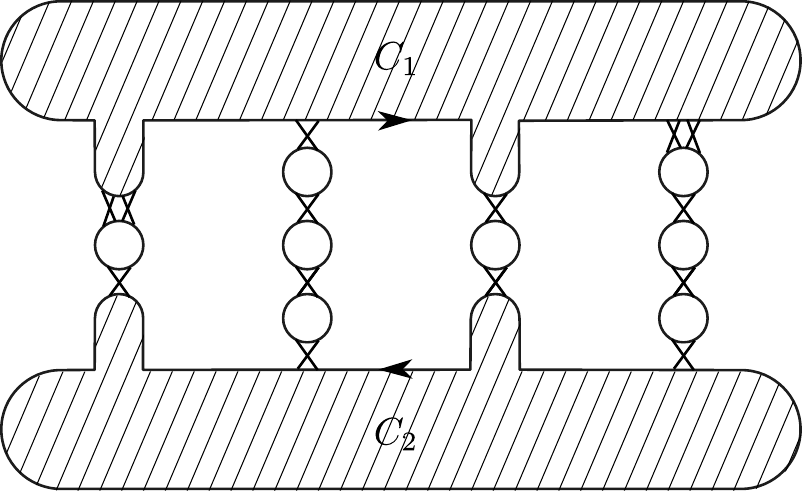}
\caption{Left: A Type M1 link diagram where the two anchor Seifert circles are parallel and the attached strings contain an even number of Seifert circles; Right: A Type M2 link diagram where the two anchor Seifert circles are antiparallel and the attached strings contain an odd number of Seifert circles. The anchor Seifert circles are shaded to indicate that they belong to two disjoint Type A link diagrams. Notice that a Seifert circle in a string attached to $C_1$ may share multiple crossings with $C_1$.
}
\label{TypeM} 
\end{figure}

In the case of Type M1, let each string contain $2k_j$ Seifert circles and at least $2k_j$ lone crossings. Then the number of Seifert circles in the string can be reduced by $k_j$ (using the reduction scheme illustrated in Figure \ref{Reduction}). If these crossings are positive (negative), then we have $r^+(D)=\sum_{1\le j\le m}k_j$, $r^-(D)=0$ ($r^+(D)=0$, $r^-(D)=\sum_{1\le j\le m}k_j$). In the case of Type M2, the situation is slightly different. Let each string contain $2k_j-1$ Seifert circles and at least $2k_j-1$ lone crossings. Then the number of Seifert circles in this string can be reduced by $k_j-1$. Figure \ref{caseM2} illustrates that an additional Seifert circle can be eliminated from the diagram. That is, if a Type M2 link diagram $D$ has $m\ge 2$ strings such that its $j$-th string ($k_j\ge 1$, $1\le j\le m$) contains $2k_j-1$ Seifert circles with at least $2k_j-1$ positive (negative) lone crossings, then we have $r^+(D)=-m+1+\sum_{1\le j\le m}k_j$, $r^-(D)=0$ ($r^+(D)=0$, $r^-(D)=-m+1+\sum_{1\le j\le m}k_j$). As we indicated in Remark \ref{r_est}, we shall see that these are indeed the reduction numbers of $D$ after we establish Theorem \ref{multipath_theorem1}.

\begin{figure}[htb!]
\includegraphics[scale=.35]{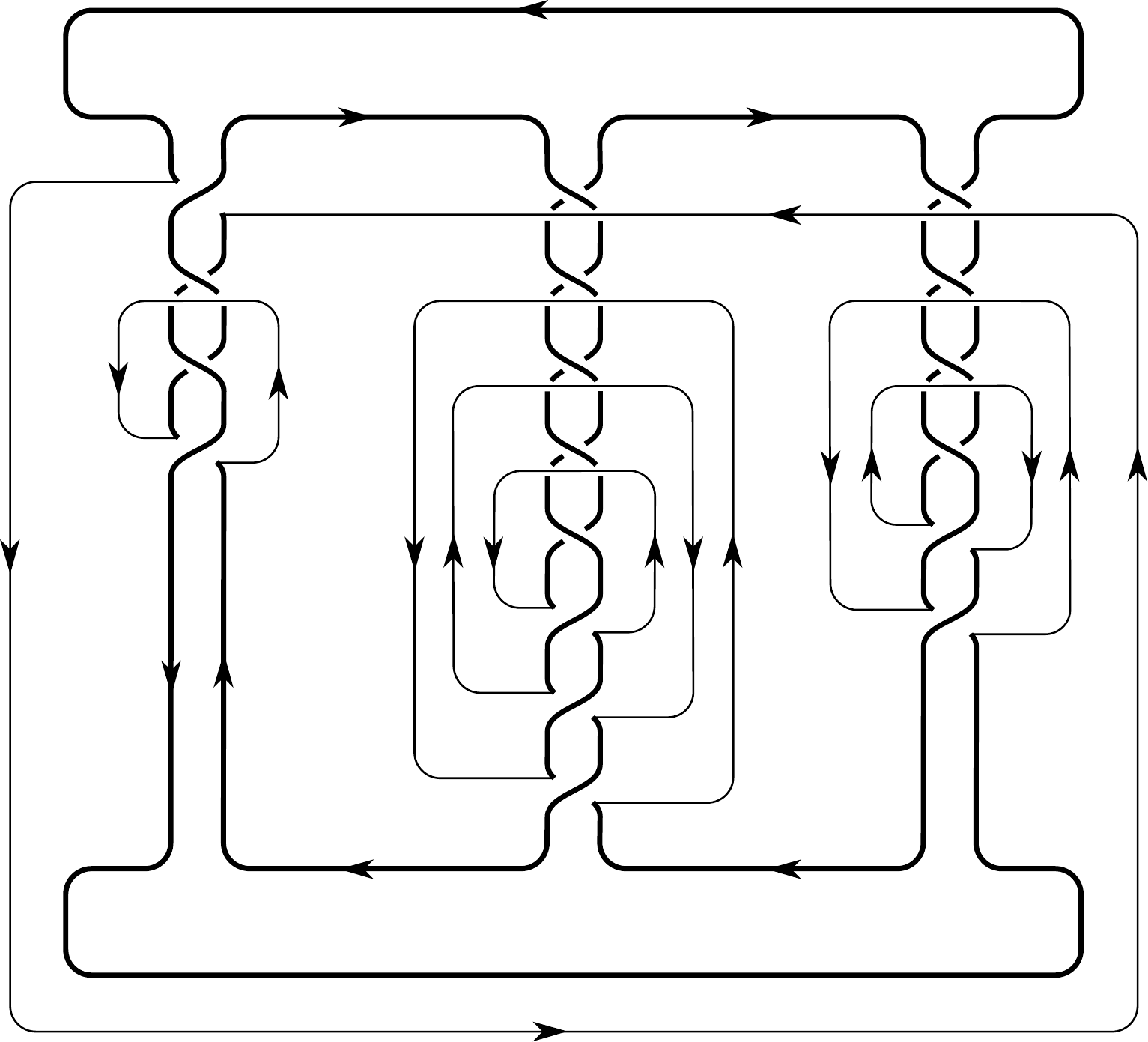}
\caption{How an additional Seifert circle can be eliminated from the first string in a Type M2 base link diagram.}
\label{caseM2} 
\end{figure}

\medskip
\begin{theorem}\label{multipath_theorem1}
A Type M link diagram $D$ is a base link diagram with the reduction numbers as defined above. Then $\textbf{b}(D)=s(D)-r(D)$, where $r(D)$ is the reduction number discussed in the previous paragraph.
\end{theorem}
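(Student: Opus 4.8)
Following the convention adopted after Theorem~\ref{cycle_theorem}, I would prove only the case in which every attaching crossing is positive, so that the reduction numbers to be confirmed are $r^-(D)=0$ together with $r^+(D)=\sum_{1\le j\le m}k_j$ (type M1) or $r^+(D)=-m+1+\sum_{1\le j\le m}k_j$ (type M2); the negative case then follows from (\ref{mirro_H}). Exactly as in Remark~\ref{r_est}, the whole statement reduces to verifying the two equalities (\ref{E1}) and (\ref{e1}) for these values: once $E(D)=s(D)-w(D)-1$ and $e(D)=-s(D)-w(D)+1+2r^+(D)$ are known, the MWF inequality gives $s(D)-r(D)=(E(D)-e(D))/2+1\le\textbf{b}(D)$, while the explicit reductions of Figures~\ref{Reduction} and~\ref{caseM2} give $\textbf{b}(D)\le s(D)-r(D)$; equality then both proves the braid-index formula and certifies that the demonstrated $r^+(D)$ is optimal. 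Note that, because the anchor diagrams $D_1,D_2$ are only assumed to be of Type~A, $D$ itself need not be a positive diagram, so Remark~\ref{positive_remark} cannot be applied to $D$ wholesale and both equalities genuinely require a skein computation.

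The equality $E(D)=s(D)-w(D)-1$ I would obtain by the same inductive argument as Part~1 of Theorem~\ref{cycle_theorem}. Since $r^-(D)=0$, Lemma~\ref{lowerbound_lemma} already gives $E(D)\le s(D)-w(D)-1$, so only the matching lower bound is needed. Resolving the positive attaching crossings with (\ref{Skein1}) and tracking, at each step, which of the two skein branches carries the top $a$-power (as in the $k=1,2$ and general-$k$ inductions of Theorem~\ref{cycle_theorem}), the surviving smoothing branches reduce $D$ to Type~A diagrams---connected sums of the anchor diagrams $D_1,D_2$ and of the elementary torus links created along the strings---on which Theorem~\ref{main_lemma} evaluates $E$. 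One checks that the value $s(D)-w(D)-1$ is attained and is not lowered by the branching across the $m$ distinct strings. This is the more routine half.

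The substance is the equality $e(D)=-s(D)-w(D)+1+2r^+(D)$. The lower bound $e(D)\ge -s(D)-w(D)+1+2r^+(D)$ is immediate from Lemma~\ref{lowerbound_lemma}, so it remains to produce a single term of a partial skein resolution whose lowest $a$-power equals $-s(D)-w(D)+1+2r^+(D)$, and to show it contributes uniquely, with maximal $z$-degree, to $p_0^\ell(D)$. I would run the branching process of Theorem~\ref{cycle_theorem}, i.e.\ apply (\ref{Skein1}) along Algorithm~N, processing the $m$ strings successively and, within a string, first resolving the block of $m_j\ge 2$ crossings between two consecutive circles at the anchor $C_1$. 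As in Cases~1--3 there, this splits the diagrams into a $D_1^{max}$-family, where the two circles detach by Reidemeister~II moves and the subsequent lone crossings of that string become nugatory (leaving a Type~A diagram after reduction), and a $D_2^{max}$-family, where one lone crossing survives and the remainder is again a Type~M diagram with a shortened or deleted string. Recording the weights $W(D^j)=z^{t}a^{w(D^j)-w(D)}$ along with $s,w,r^+$ of each reduced leaf, and applying Theorem~\ref{main_lemma} to the Type~A summands $\hat{D}$ and $T_j$, the dominant contribution comes from reducing each string maximally, which accounts for $\sum_{1\le j\le m}k_j$ reductions in type~M1 and $\sum_{1\le j\le m}(k_j-1)$ reductions in type~M2; the natural scaffolding is an outer induction on $m$ with base case $m=2$ supplied by Theorem~\ref{cycle_theorem}, and an inner induction on the string lengths as in Parts~1--2 of that proof.

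The hard part will be the type~M2 bookkeeping. There each odd string of $2k_j-1$ Seifert circles can be internally reduced only by $k_j-1$, yet $r^+(D)=\sum_{1\le j\le m}(k_j-1)+1$: there is exactly one further Seifert circle eliminable across the diagram as a whole, as in Figure~\ref{caseM2}. Realizing this extra ``$+1$'' at the level of the resolving tree---exhibiting a branch whose lowest $a$-power drops by two beyond the per-string reductions while proving that no branch drops it further---calls for a global argument that exploits the antiparallel orientation of the anchors $C_1,C_2$, rather than a string-by-string one, and I expect this to be the principal obstacle. The remaining delicate point is the uniqueness needed for $p_0^\ell(D)$: among all leaves attaining the minimal $a$-power one must isolate a single leaf of maximal $z$-degree and determine its sign, exactly as the uniqueness of $W(D_1^{max})H(D_1^{max})$ was argued in Case~1 of Theorem~\ref{cycle_theorem}; with $m$ interacting strings this requires comparing $z$-degrees of the Type~A leaves produced by all admissible reduction patterns. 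Once (\ref{E1}) and (\ref{e1}) are secured, $D$ is a base link diagram and $\textbf{b}(D)=s(D)-r(D)$ follows as in the first paragraph.
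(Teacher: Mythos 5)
Your plan follows essentially the same route as the paper's proof: reduce to the positive case, use Lemma~\ref{lowerbound_lemma} and the MWF inequality for the one-sided bounds, and establish (\ref{E1}) and (\ref{e1}) by an outer induction on $m$ (base case $m=2$ via Theorem~\ref{cycle_theorem}) with an inner induction on the string parameters, resolving crossings via (\ref{Skein1}) and evaluating the Type~A leaves with Theorem~\ref{main_lemma}. The Type~M2 ``extra $+1$'' you flag as the principal obstacle is handled in the paper exactly by the scaffolding you propose --- first settling the all-$k_j=1$ case (where $r^+(D)=1$ for every $m$) by induction on $m$, with the $D_-$ branch collapsing to a Type~A diagram --- and the $p_0^\ell$ uniqueness worry evaporates because at each skein step only one branch attains the extremal $a$-power.
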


%\medskip
%\begin{theorem}\label{multipath_theorem1}
%A Type M link diagram $D$ is a base link diagram with the reduction numbers as defined above. Consequently, if $D$ is of Type M1, then $\textbf{b}(D)=s(D)-r(D)$, where $r(D)$ is the summation of the reduction numbers of the individual strings attached to the two anchor Seifert circles, whereas if $D$ is of Type M2, then $\textbf{b}(D)=s(D)-r(D)$, where $r(D)$ is the summation of the reduction numbers of the individual strings attached to the two anchor Seifert circles plus one.
%\end{theorem}

\begin{proof}
We will only prove the case where all lone crossings in the attachments are positive so $r^-(D)=0$. 

\noindent
\underline{The case of Type M1 link diagrams.} Assume that we have $m\ge 3$ strings and the $j$-th string contains $2k_j$ Seifert circles with $2k_j$ ($k_j\ge 0$) lone crossings while the Seifert circle in the string attached to $C_1$ shares $\gamma_j\ge 1$ crossings with $C_1$. We have $r^-(D)=0$, $s(D)=s(D_1)+s(D_2)+2\sum_{1\le j\le m}k_j$, $w(D)=w(D_1)+w(D_2)+\sum_{1\le j\le m}(2k_j+\gamma_j)$ and we need to show that $E(D)=s(D)-w(D)-1-2r^-(D)=s(D)-w(D)-1$. We will use induction on $m\ge 2$. For $m=2$, $D$ is a Type B link diagram so the statement of the theorem follows from Theorem \ref{cycle_theorem}. Assume that the statement holds for some $m_0\ge 2$ and consider the case $m=m_0+1\ge 3$. If $k_1=k_2=\cdots=k_m=0$, then $D$ is a Type A base link diagram and the statement holds by Theorem \ref{main_lemma}. Assume that the claim is true for some $k_j=k_j^\p\ge 0$, $1\le j\le m$ and let us consider the case when one of the $k_j$'s has been increased by one. W.l.o.g. we assume that $k_m=k_m^\p+1$ and apply (\ref{Skein1}) to a lone crossing within the $m$-th string. 
Then $D_-$ is the diagram with $k_j=k_j^\p$, $1\le j\le m$, $s(D_-)=s(D)-2$, $w(D_-)=w(D)-2$, $r^-(D_-)=r^-(D)=0$, $r^+(D_-)=r^+(D)-1$ hence 
$-2+E(D_-)=-2+(s(D)-2)-(w(D)-2)-1=s(D)-w(D)-3= E(D)-2$ and 
$-2+e(D_-)=-2-(s(D)-2)-(w(D)-2)+1+2(r^+(D)-1)=-s(D)-w(D)+1+2r^+(D)$.
On the other hand, $D_0$ simplifies to $\tilde{D}_0=D^\p\#T_{\gamma_m}$, where $D^\p$ is the Type M1 link diagram with $m-1=m_0\ge 2$ strings (obtained from $D$ by removing the $m$-th string from it) and $T_{\gamma_m}$ is trivial if $\gamma_m=1$. If $\gamma_m=1$, then
$s(\tilde{D}_0)=s(D)-2k_m$, $w(\tilde{D}_0)=w(D)-2k_m-1$, $r^-(\tilde{D}_0)=r^-(D)=0$, $r^+(D_0)=r^+(D)-k_m$ hence by the induction hypothesis 
$-1+E(D_0)=-1+E(\tilde{D}_0)=-1+(s(D)-2k_m)-(w(D)-2k_m-1)-1=s(D)-w(D)-1=E(D),$ and
$
-1+e(D_0)=-1+e(\tilde{D}_0)=-1-(s(D)-2k_m)-(w(D)-2k_m-1)+1+2(r^+(D)-k_m)=-s(D)-w(D)+1+2k_m+2r^+(D)>-s(D)-w(D)+1+2r^+(D).$ Thus $E(D)=s(D)-w(D)-1$ and $e(D)=-s(D)-w(D)+1+2r^+(D)$.
Similarly, if $\gamma_m>1$, then 
$s(\tilde{D}_0)=s(D)-2k_m+1$, $w(\tilde{D}_0)=w(D)-2k_m$, $r^-(\tilde{D}_0)=r^-(D)=0$, $r^+(D_0)=r^+(D)-k_m$. Thus by the induction hypothesis we also have
$-1+E(D_0)=-1+E(\tilde{D}_0)=-1+(s(D)-2k_m+1)-(w(D)-2k_m)-1=s(D)-w(D)-1=E(D)$ and
$
-1+e(D_0)=-1+e(\tilde{D}_0)=-1-(s(D)-2k_m+1)-(w(D)-2k_m)+1+2(r^+(D)-k_m)=-s(D)-w(D)+1+2k_m+2r^+(D)>-s(D)-w(D)+1+2r^+(D).$
Thus we still have $E(D)=s(D)-w(D)-1$ and $e(D)=-s(D)-w(D)+1+2r^+(D)$, as desired.
This proves the case of Type M1 link diagrams. 

\noindent
\underline{The case of Type M2 link diagrams.} We will skip the case of $E(D)$ it is similar to the Type M1 link diagrams. 
Below we only provide the proof for $e(D)$, keep in mind that in the case of a Type M2 diagram, the crossings in the attached strings are all lone crossings.
Let us first consider the case $k_j=1$ for $1\le j\le m$. Here $r^+(D)=1$ for all $m\ge 2$. If $m=2$, the result holds by Theorem \ref{cycle_theorem}. Assume now that the formula in the theorem is true for $m=n\ge 2$, $k_1=k_2=\cdots=k_n=1$. Let us consider the case for $m=n+1$, $k_1=k_2=\cdots=k_n=k_{n+1}=1$. If the last string contains two lone crossings, then we apply (\ref{Skein1}) to a lone crossing in the last string attached. $D_-$ simplifies to a Type A base link diagram $\tilde{D}_-$ with $s(\tilde{D}_-)=s(D)-2$, $w(\tilde{D}_-)=w(D)-2$. Thus by Theorem \ref{main_lemma} we have $-2+e(D_-)=-2-s(\tilde{D}_-)-w(\tilde{D}_-)+1=-2-(s(D)-2)-(w(D)-2)+1=-s(D)-w(D)+3=-s(D)-w(D)+1+2r^+(D)$.  On the other hand, 
$D_0$ simplifies to the link diagram $\tilde{D}_0$ from the previous step with $m=n$, $s(\tilde{D}_0)=s(D)-1$, $w(\tilde{D}_0)=w(D)-2$ and $r^+(D_0)=1=r^+(D)$. By the induction hypothesis we have $-1+e(D_0)=-1+e(\tilde{D}_0)=-1-s(\tilde{D}_0)-w(\tilde{D}_0)+1+2r^+(\tilde{D}_0)=-1-(s(D)-1)-(w(D)-2)+1+2r^+(D)=-s(D)-w(D)+3+2r^+(D)>s(D)-w(D)+1+2r^+(D)$. Hence $e(D)=-s(D)-w(D)+1+2r^+(D)$. If on the other hand, the last string contains only one lone crossing and the Seifert circle in the string attached to $C_1$ shares $\gamma_m\ge 2$ crossings with $C_1$, then we resolve these crossings using the method depicted in Figure \ref{Case1Figure}. In this case $D_1^{max}$ simplifies to the diagram $D_n$ corresponding to $m=n$ with $s(D_n)=s(D)-1$, $w(D_n)=w(D)-\gamma_m-1$, hence (by the induction hypothesis)
$-\gamma_m+e(D_1^{max})=-m-(s(D)-1)-(w(D)-m-1)+1+2r^+(D)=-s(D)-w(D)+3+2r^+(D)$. On the other hand, $D_2^{max}$ corresponds to the diagram with $m=n+1$ where the last string contains two lone crossings, we have $s(D_2^{max})=s(D)$, $w(D_2^{max})=w(D)-\gamma_m+1$, hence by the previous step we have 
$-\gamma_m+1+e(D_2^{max})=-\gamma_m+1-s(D)-(w(D)-\gamma_m+1)+1+2r^+(D)=-s(D)-w(D)+1+2r^+(D)$. Thus we have $e(D)=-s(D)-w(D)+1+2r^+(D)$ as desired.
This completes the proof that the theorem holds for any number of $m$ strings as long as each string contains exactly one Seifert circle, that is, $k_1=k_2=\cdots=k_m=1$.

Assume now that the statement of the theorem holds for some $k_1\ge 1$, ..., $k_m\ge 1$. Notice that if $m=2$ then $r^+(D)=k_1+k_2-1$ and the statement of the theorem holds by Theorem \ref{cycle_theorem}. So we can further assume that $m\ge 3$ and consider the case when one of the $k_j$'s is increased by one.  W.l.o.g. assume that $k_m$ is increased to $k^\p_m=k_m+1\ge 2$ (so the number of lone crossings in the $m$-th string is increased by two). Apply (\ref{Skein1}) to a lone crossing within the $m$-th string. $D_-$ is the link diagram in the previous step and $D_0$ simplifies to $D_n\#T_{\gamma_m}$, where $D_n$ is the Type M2 base link diagram with $m-1=n$ strings obtained from $D$ by removing the last string of Seifert circles attached to it, and $\gamma_m\ge 1$ is the number of crossings between $C_1$ and the Seifert circle in the $m$-string that is attached to it. We have $r^+(D_-)=r^(D)-1$ and $r^+(D_n)=r^+(D)-k^\p_m+1$. In the case that $\gamma_m> 1$, $s(D_n\#T_{\gamma_m})=s(D)-2k^\p_m+2$, $w(D_n\#T_{\gamma_m})=w(D)-2k^\p_m+1$, thus we have (by the induction hypothesis) $-2+e(D_-)=-2-s(D_-)-w(D_-)+1+2r^+(D_-)=-2-(s(D)-2)-(w(D)-2)+1+2(r^+(D)-1)=-s(D)-w(D)+1+2r^+(D)$ and $-1+e(D_0)=-1-(s(D)-2k^\p_m+2)-(w(D)-2k^\p_m+1)+1+2(r^+(D)-k^\p_m+1)=-s(D)-w(D)+1+2r^+(D)+2(k^\p_m-1)>-s(D)-w(D)+1+2r^+(D)$ since $k^\p_m\ge 2$. Thus $e(D)=-s(D)-w(D)+1+2r^+(D)$ as desired. This finishes the proof.
\end{proof}

\subsection{Base link diagrams through further attachment operations}

In general, any alternating link diagram $D$ can be constructed from Type A link diagrams by attaching strings of lone crossings to Seifert circles in these link diagrams, or/and by attaching Seifert circles to one another (so that they will share lone crossings or multiple crossings). We will call these Type A link diagrams used to construct $D$ this way the {\em underlining} Type A link diagrams. The Type B or M link diagrams are two such examples. In the case of Type B or M link diagrams, our construction attached lone crossings to rather arbitrary Seifert circles in the Type A link diagrams and no particular properties of the Type A link diagrams were needed. Thus if we modify the underlining Type A link diagrams or replace the underlining Type A link diagrams with different ones to construct different Type B or M link diagrams, then the resulting (alternating) new link diagrams are still Type B or M link diagrams with the same reduction numbers. The only restriction on the modification or replacement of Type A link diagrams is that the new Type A link diagrams provide the needed Seifert circles with an unchanged orientation of these circles.
Such modifications include (but certainly are not limited to) attaching (not necessarily properly) a new Seifert circle to one of the Type A link diagrams, or adding more crossings between two Seifert circles (in one of the Type A link diagrams) that already share multiple crossings. This observation motivates the following definition.

\begin{definition}\label{strong_definition}{\em
Let $D$ be a base link diagram constructed from certain underlining Type A link diagrams. 
Let $D^\p$ be an alternating link diagram that is constructed in the same way as $D$ except that the construction uses different Type A link diagrams. If any such diagram $D^\p$ is a base link diagram with the same reduction numbers as that of $D$, then $D$ is called a {\em strong base link diagram}.}
\end{definition}

\begin{remark}{\em
While we do not know whether all base link diagrams are strong, we know at least a connected sum of an arbitrary number of the Type A, B and M link diagrams is also a strong base link diagram. 
}
\end{remark}

In this subsection we introduce an approach to construct new base link diagrams from an existing strong base link diagram by attaching strings of Seifert circles with lone crossings.  

\begin{definition}{\em 
Let $C$ be a Seifert circle in an alternating link diagram. A string $S$ consisting of $2k-1$ ($k\ge 1$) Seifert circles attached to $C$ is called a {\em Type I string} if any two consecutive Seifert circles in the string are connected by a lone crossing, and either the first or the last Seifert circle in the string is attached to $C$ by a lone crossing, while the other one is connected to $C$ either by a lone crossing or by multiple crossings.
}
\end{definition}
Notice that attaching $S$ adds $2k-1$ Seifert circles to the diagram and increases the reduction number (either $r^-$ or $r^+$) by $k-1$. Multiple Type I strings are allowed to be attached to the same Seifert circle. See Figure \ref{TypeIattachment} for an illustration. 

\begin{figure}[htb!]
\includegraphics[scale=.4]{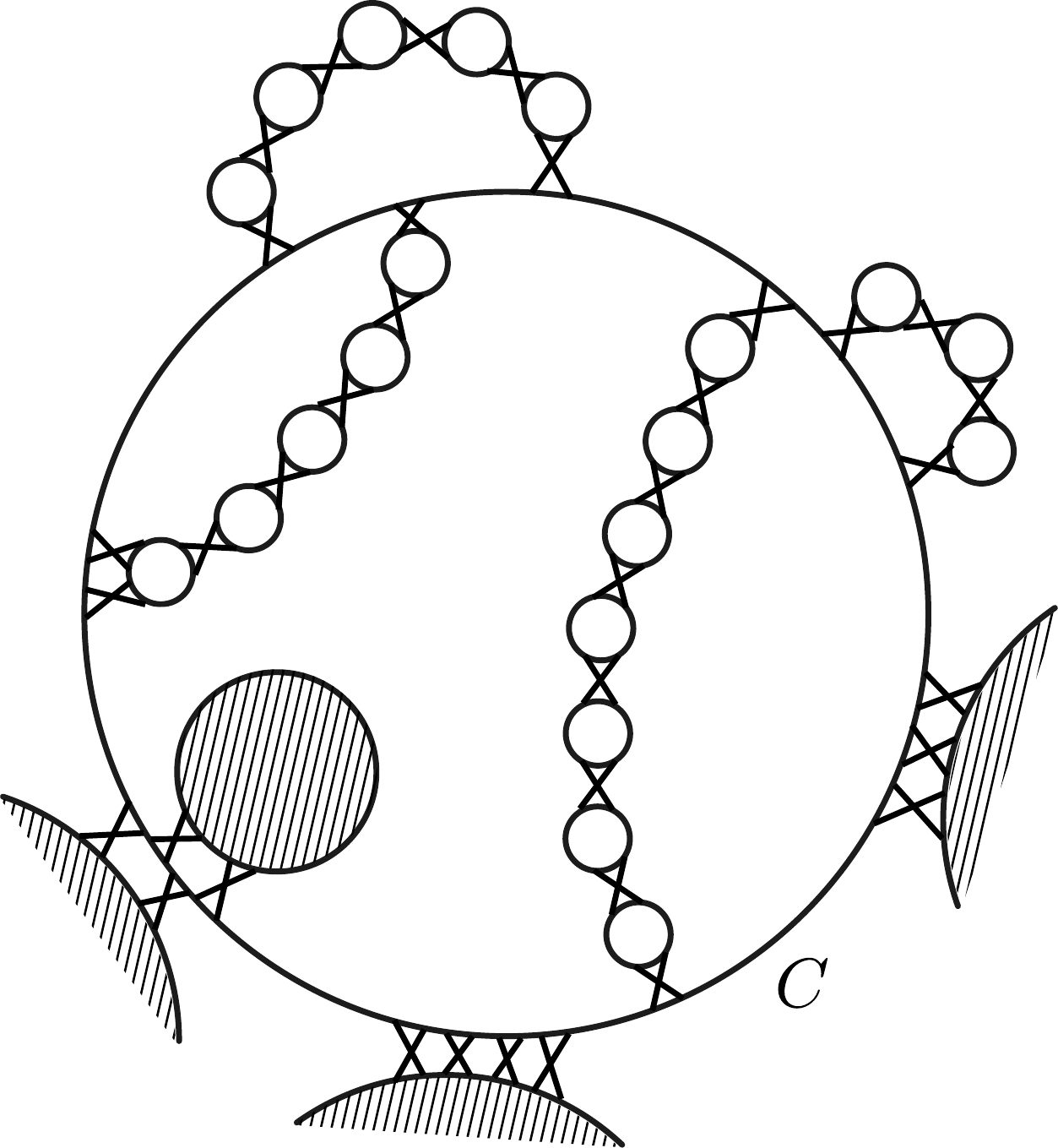}
\includegraphics[scale=.4]{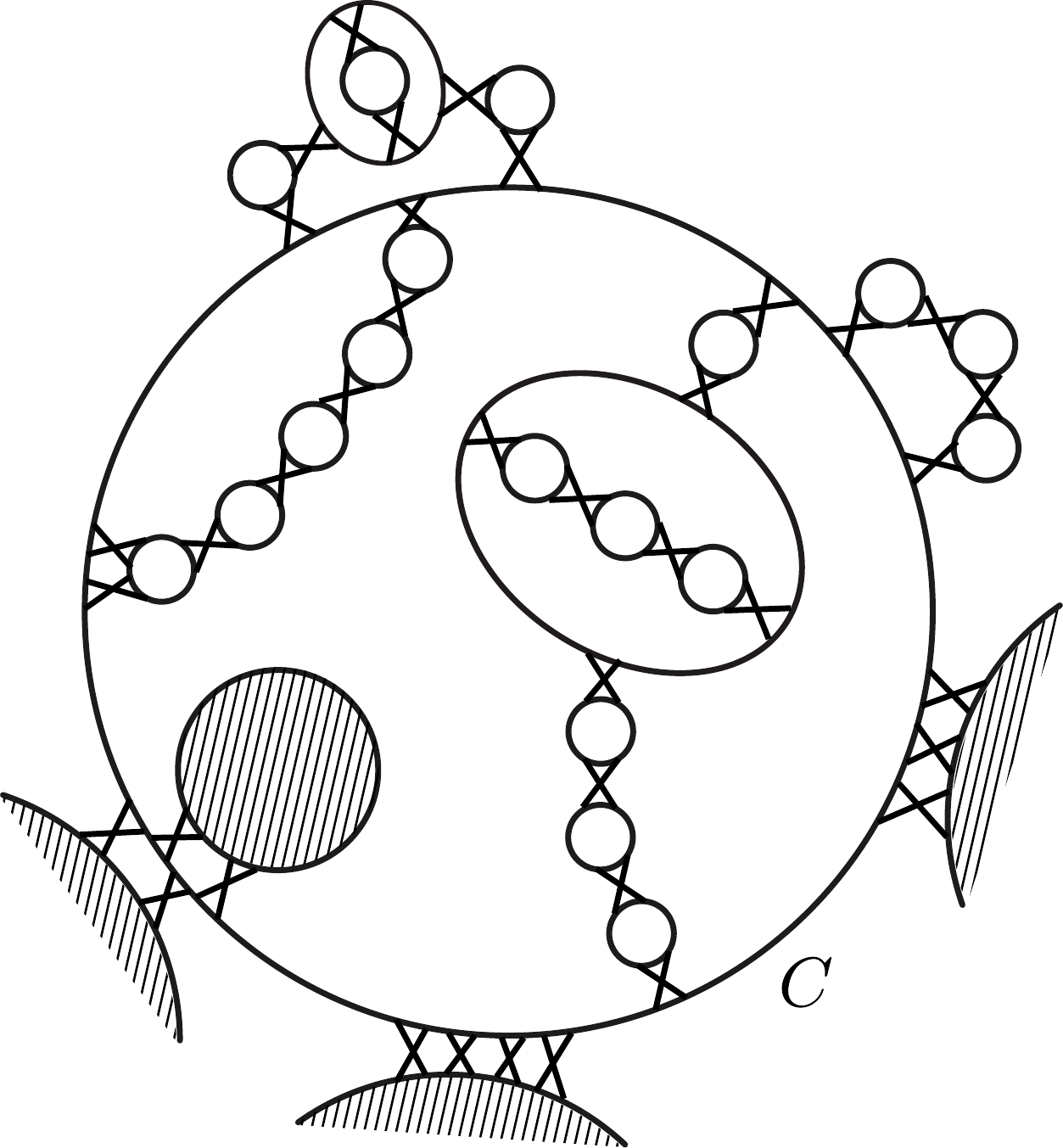}
\caption{ 
Type I attachment of several strings to a single Seifert circle. The shaded Seifert circles indicate other Seifert circles in the original (base) link diagram sharing crossings with $C$. Right: Repeated Type I attachment operations using Seifert circles created by previous Type I attachments.   
\label{TypeIattachment} } 
\end{figure}

\vspace{.1in}
 \begin{theorem}\label{TypeI_theorem}
Let $D_b$ be a strong base link diagram and let $D$ be a link diagram obtained by attaching Type I strings to Seifert circles of the underlining Type A diagrams used to construct $D_b$, then $D$ remains a strong base link diagram.
 Furthermore, the reduction numbers of $D$ are the summations of the reduction numbers of $D_b$ and those of the attached strings.
 \end{theorem}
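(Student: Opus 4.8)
The plan is to follow the template of the proofs of Theorems~\ref{cycle_theorem} and~\ref{multipath_theorem1}. First I would invoke mirror symmetry~(\ref{mirro_H}) to assume that every crossing in every attached Type I string is positive, so that no negative lone crossings are created and the candidate reduction numbers become $r^-(D)=r^-(D_b)$ and $r^+(D)=r^+(D_b)+\sum_i(k_i-1)$, the sum running over the attached strings with the $i$-th consisting of $2k_i-1$ Seifert circles. Following Remark~\ref{r_est}, it then suffices to exhibit reductions realizing these numbers (which the reduction scheme of Figure~\ref{Reduction} does, string by string) and to verify the two degree equations~(\ref{E1}) and~(\ref{e1}); the MWF inequality then forces the exhibited numbers to be the genuine reduction numbers and yields $\textbf{b}(D)=s(D)-r(D)$. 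Since Lemma~\ref{lowerbound_lemma} already supplies $E(D)\le s(D)-w(D)-1-2r^-(D)$ and $e(D)\ge -s(D)-w(D)+1+2r^+(D)$, the entire content is to produce terms of $H(D,z,a)$ attaining these two bounds.

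I would organize the verification as a double induction. The outer induction is on the number of attached strings: the base case (no strings) is the hypothesis that $D_b$ is a strong base link diagram, and at each stage I attach one further Type I string to a Seifert circle $C$ of an underlining Type A diagram of the \emph{current} diagram $D'$, where each previously created string circle is itself a trivial (single-circle) Type A diagram, so that the repeated attachments of Figure~\ref{TypeIattachment} are covered. Strongness of $D'$ is exactly what makes the computation below indifferent to which Type A diagrams realize the ambient structure, and it is preserved because that computation uses $D'$ only through its being a base link diagram together with the degree/leading-coefficient data of Theorem~\ref{main_lemma} for its Type A constituents, never the internal details of those pieces. For the single-string step I induct on the size $k$ of the string. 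When $k=1$ the string is a single circle sharing two positive (hence \emph{regular}, not lone) crossings with $C$; this introduces no lone crossing and $k-1=0$, so the circle can be absorbed into an enlarged underlining Type A diagram and strongness of $D'$ gives the claim with unchanged reduction numbers.

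For the inductive step $k\to k+1$ I would apply the skein relation~(\ref{Skein1}) to a lone crossing inside the string and track the two children exactly as in Case~2 of Theorem~\ref{cycle_theorem} and in the $k_m$-increment step of Theorem~\ref{multipath_theorem1} (note that a Type I string always sits in the regime ``lone crossings $\ge n$'', so the delicate equal-degree comparison of Case~3 does not arise at the generic interior crossing). The flipped diagram $D_-$ simplifies by a Reidemeister~II move to the previous inductive step, whose lowest $a$-power is known by induction; multiplying by the weight $a^{-2}$ lands it precisely on the target $-s(D)-w(D)+1+2r^+(D)$. The smoothed diagram $D_0$ simplifies by a Reidemeister~I move to a diagram with a shortened string or a connected sum with an elementary torus link $T_\gamma$, and one checks its weighted lowest $a$-power is strictly larger, so it cannot cancel the $D_-$ term. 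Carrying $p_0^\ell$ through these reductions and evaluating it on the Type A and torus-link pieces via Theorem~\ref{main_lemma} yields both the degree equation~(\ref{e1}) and the exact leading coefficient; the equation~(\ref{E1}) for $E(D)$ is obtained by the symmetric argument indicated in Theorem~\ref{multipath_theorem1}, tracking $p_0^h$ in place of $p_0^\ell$.

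The main obstacle I anticipate is the leading-coefficient bookkeeping rather than the degree count: I must verify that $p_0^\ell(D)\ne 0$, i.e.\ that the surviving $D_-$ term is genuinely nonzero (which reduces to $p_0^\ell(D')\ne 0$, known through Theorem~\ref{main_lemma}) and that no two equal-degree terms cancel. Two configurations demand extra care. The first is a string end attached to $C$ by multiple crossings $\gamma\ge 2$ rather than a lone crossing: here, as in the $\gamma_m>1$ subcase of Theorem~\ref{multipath_theorem1}, those crossings must be resolved by the branching of Figure~\ref{Case1Figure} rather than a single skein application, and the two resulting groups must be compared. The second is several strings sharing the same anchor $C$, where I must confirm that the reductions and the skein branchings performed on one string do not interfere with the others, so that the reduction numbers remain additive. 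Finally, I would check that attaching only positive crossings leaves every $\partial^-S$-component unchanged, so that $r^-(D)=r^-(D_b)$ and the equation~(\ref{E1}) hold with the negative reduction number untouched.
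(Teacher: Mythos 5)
Your proposal is correct and follows essentially the same route as the paper: a double induction (outer on the number of attached strings, inner on the string length), with the $k=1$ circle absorbed into an enlarged underlining Type A diagram via strongness, and the inductive step handled by applying the skein relation at a lone crossing of the string and comparing the extreme $a$-degrees of the two children ($D_-$ attaining the target, $D_0$ landing strictly inside). The only difference is that you carry more machinery than needed -- the paper never invokes Lemma~\ref{lowerbound_lemma}, the Figure~\ref{Case1Figure} branching, or $p_0^\ell$ bookkeeping here, since the two skein children contribute at distinct extreme degrees so no cancellation analysis is required -- but none of this harms the argument.
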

 
\begin{proof} We will use induction on $n$, the number of Type I strings attached to $D_b$. 
For $n=0$, the result holds since the initial diagram is a strong base link diagram. Assume now that the statement of the theorem is true for $n=n_0\ge 0$. Now consider the case $n=n_0+1$. Without loss of generality, let us assume that the crossings in $S$ are of positive sign. For $k=1$, the operation attaches a single new Seifert circle $C^\p$ to $C$ with at least two crossings (so they are not lone crossings). Thus this attachment only modifies the underlining Type A link diagrams used to construct $D_b$ since $C$ belongs to one of the underlining Type A link diagrams. By the induction hypothesis, $D$ is a strong base link diagram and the reduction number remains unchanged. Now assume that the statement is true for some $k_0\ge 1$ and consider the case $k=k_0+1$. Keep in mind that one end Seifert circle $C^\p$ in $S$ may be attached to $C$ with multiple crossings. Let the number of crossings between $C$ and $C^\p$ be $m\ge 1$.
Apply \ref{Skein1} to a lone crossing in the added string. $D_-$ reduces (via a Reidemeister move II) to the link diagram $\tilde{D}_-$ which corresponds to $k=k_0$ while $D_0$ reduces (via Reidemeister I moves) to the link diagram $\tilde{D}_0=D^\p\#T_{m}$ where $T_m$ is trivial if $m=1$, and $D^\p$   corresponds to $n=n_0$. If $m=1$, then we have $s(D)=s(\tilde{D}_-)+2=s(\tilde{D}_0)+2k-1$, $w(D)=w(\tilde{D}_-)+2=w(\tilde{D}_0)+2k$, $r^-(D)=r^-(D_+)=r^-(\tilde{D}_-)=r^-(\tilde{D}_0)$ and $r^+(D)=r^+(\tilde{D}_-)+1=r^+(\tilde{D}_0)+k-1$. Thus
 \begin{eqnarray*}
 -2+E(D_-)&=& -2+s(\tilde{D}_-)-w(\tilde{D}_-)-1-2r^-(\tilde{D}_-)\\
 &=&
 -2+(s(D)-2)-(w(D)-2)-1-2r^-(D)\\
 &=&
 s(D)-w(D)-3-2r^-(D),\\
 -1+E(D_0)&=& -1+s(\tilde{D}_0)-w(\tilde{D}_0)-1-2r^-(\tilde{D}_0)\\
 &=&
 -1+(s(D)-2k+1)-(w(D)-2k)-1-2r^-(D)\\
 &=&
 s(D)-w(D)-1-2r^-(D).
 \end{eqnarray*}
It follows that $E(D)=s(D)-w(D)-1-2r^-(D)$ as desired. Similarly we have
 \begin{eqnarray*}
 -2+e(D_-)&=& -2-s(\tilde{D}_-)-w(\tilde{D}_-)+1+2r^+(\tilde{D}_-)\\
 &=&
 -2-(s(D)-2)-(w(D)-2)+1+2(r^+(D)-1)\\
 &=&
 -s(D)-w(D)+1+2r^+(D),\\
 -1+e(D_0)&=& -1-s(\tilde{D}_0)-w(\tilde{D}_0)+1+2r^+(\tilde{D}_0)\\
 &=&
 -1-(s(D)-2k+1)-(w(D)-2k)+1+2(r^+(D)-k+1)\\
 &=&
 -s(D)-w(D)+2k+1+2r^+(D),
 \end{eqnarray*}
hence $e(D)=-s(D)-w(D)+1+2r^+(D)$ since $k>1$. This proves that $D$ is still a base link diagram. Since the proof is valid for any underlining Type A link diagrams, $D$ is still a strong base link diagram. The case of $m>1$ can be similarly proved and is left to the reader.
\end{proof}

A {\em R-pattern} is a string of Seifert circles that is attached to one side of a Seifert circle $C$ in a strong base link diagram $D_b$ in a consecutive manner over an arc of $C$ (by an arc we mean that no other Seifert circles in $D_b$ share crossings with $C$ over this arc - with one exception described below in the case of {\em interlocked} R-patterns) where some of the Seifert circles in the string become attached to $C$ by a lone crossing or multiple crossings (such Seifert circles in the pattern attached to $C$ are called {\em attaching circles}). In particular, the first and the last Seifert circles in the string are attaching circles. The R in the name R-pattern stands for rational, see Section \ref{s5}. Since a cycle of Seifert circles must have even length, the string of lone crossings between two consecutive attaching circles must be even.
Two R-patterns are {\em interlocked} if they are attached to different sides of $C$ and overlap such that the ending Seifert circle in one of them is attached to $C$ between the space where the first two attaching Seifert circles of the other R-pattern are attached (from the other side of $C$), and the attachment is via a lone crossing (indicated by the arrows in Figure \ref{ValidAttachment}). 
\begin{figure}[htb!]
\includegraphics[scale=.6]{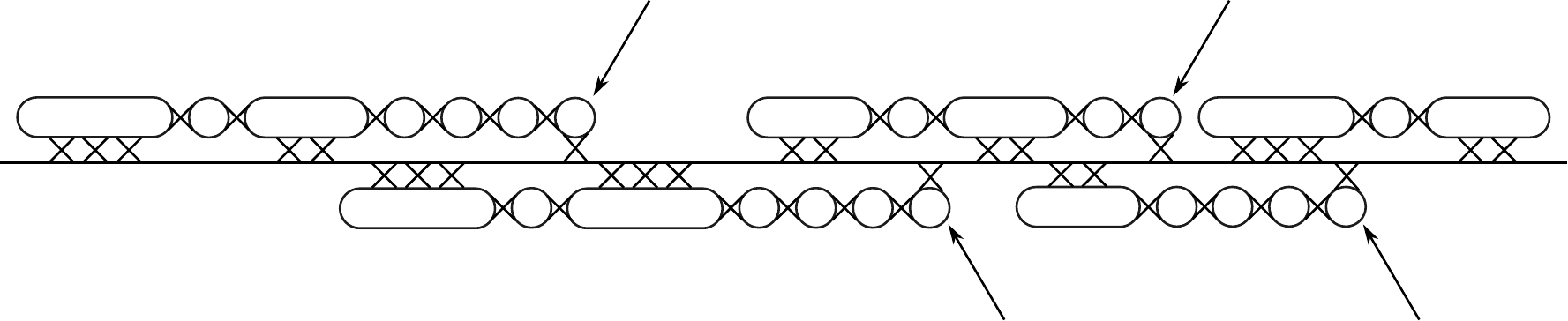}
\caption{Five interlocked R-patterns are shown in the figure where the horizontal segment is part of the Seifert circle $C$. The arrows indicate where lone crossings must be used for the attachment.
\label{ValidAttachment} }
\end{figure}
We require that no other overlaps between two R-patterns are allowed.
However, for the last R-pattern in an interlocked sequence of R-patterns, we allow the following two exceptions: 

(i) between the last  two (or first two, but not both) anchoring locations of the last $R$ pattern, $C$ may share crossings with other Seifert circles in $D$ and the attachment is via a lone crossing, see the left of Figure \ref{exceptions} for an illustration of this situation; 

(ii) the last (or first, but not both) Seifert circle in the sequence may be attached to a Seifert circle $C^\p$ of $D$ by a lone crossing, and in this case $C^\p$ shares multiple crossings with $C$ and one of these crossings is ``used" by this R-pattern as shown in the right side of Figure  \ref{exceptions}.
\begin{figure}[htb!]
\includegraphics[scale=.8]{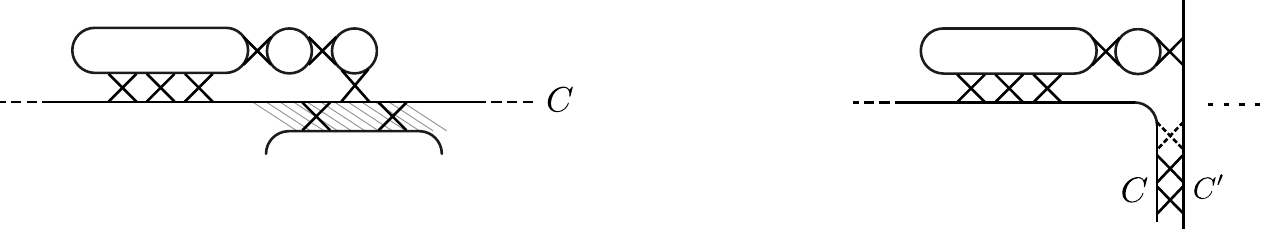}
\caption{The two exceptions to an end string in a sequence of interlocked R-patterns. Left: the shaded part indicates where other Seifert circles in the original base link diagram may be attached to $C$; Right: the dotted crossing indicates that there was an extra crossing in the original link diagram between $C$ and $C^\p$ (which is being ``used" to accommodate the attachment of the last string of Seifert circles).
\label{exceptions} }
\end{figure}

\begin{definition}\label{Rpattern}{\em 
A  {\em Type II} attachment is defined as a sequence of interlocked R-patterns attached to both sides of a Seifert circle $C$ in a strong base link diagram $D$. In particular we define a {\em Type II(i)} attachment or a {\em Type II(ii)} attachment as a Type II attachment of interlocked R-patterns that uses exception (i) or exception (ii) respectively.}
\end{definition}

For a R-pattern that contains $m$ ($m\ge 1$) strings with attaching Seifert circles $C_1$, $C_2$, ..., $C_{m+1}$, let $2k_j$ be the number of lone crossings in the $j$-th string that are between $C_j$ and $C_{j+1}$ ($k_j\ge 1$, $1\le j\le m$). The entire pattern contributes a total of $1+\sum_{1\le j\le m}2k_j$ Seifert circles to the resulting link diagram. The reduction numbers of the $j$-th string is $k_j$ and the total reduction number of this pattern is $\sum_{1\le j\le m}k_j$. Figure \ref{RpatternReduction} illustrates how these reduction numbers may be achieved. The details are left to our reader to verify. The theorem below then confirms that $\sum_{1\le j\le m}k_j$ is indeed the reduction number of a Type II attachment operation. 

\begin{figure}[htb!]
\includegraphics[scale=.35]{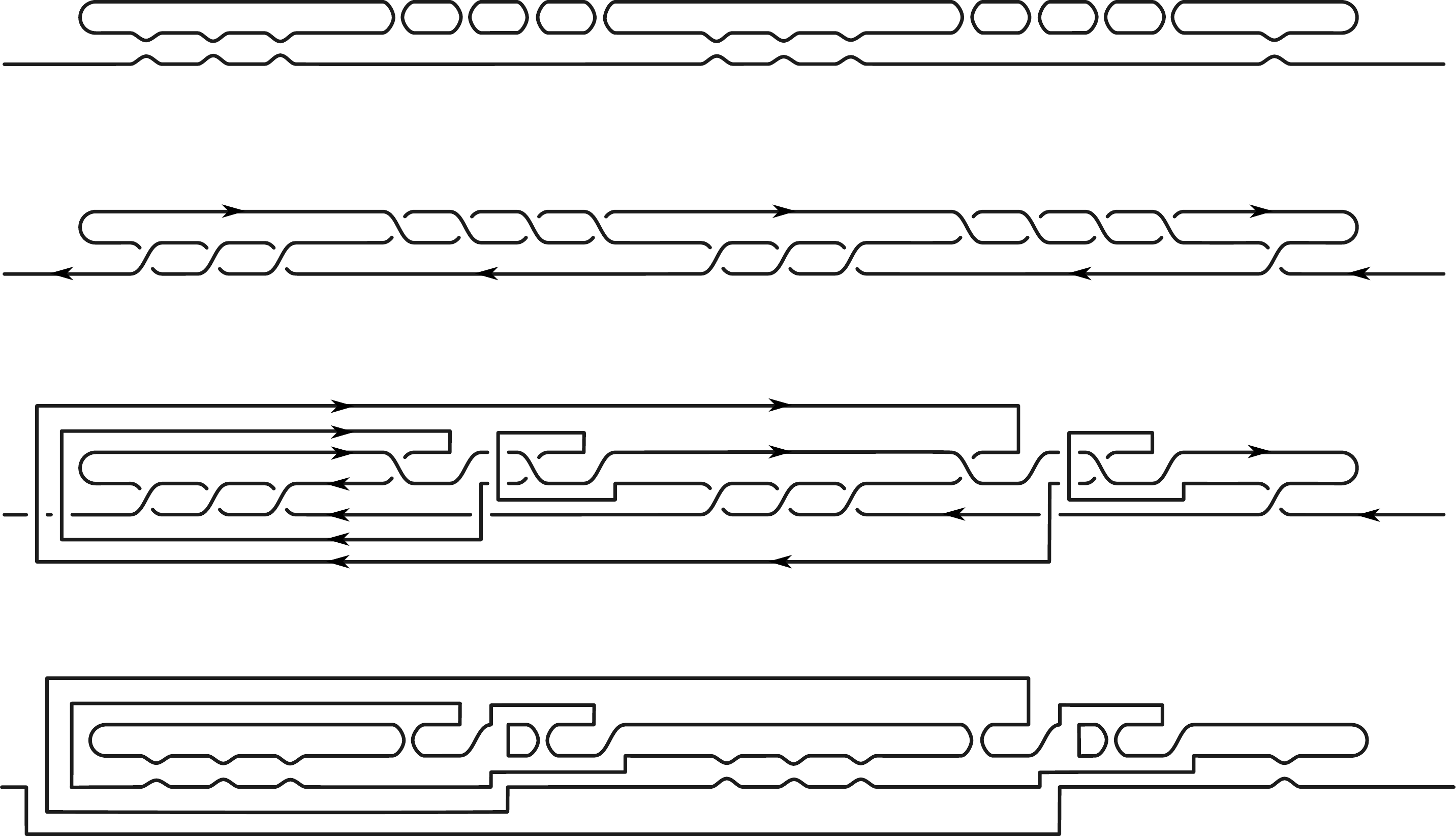}
\caption{An illustration of how the reduction numbers in a R-pattern may be achieved. Top: the Seifert circle decomposition of the original link diagram; upper middle: the original link diagram; bottom middle: the link diagram after the rerouting of strings; bottom: the Seifert circle decomposition of modified link diagram. Notice that the rerouting of the strand at the last lone crossing in a string needs to wrap around the strings that have been modified in earlier steps as shown.
\label{RpatternReduction} }
\end{figure}

\begin{theorem}\label{TypeD_theorem}
Let $D$ be a link diagram obtained by Type II attachment operations without exceptions or Type II(i) attachment operations on a strong base link diagram $D_b$ such that the attachments are made to Seifert circles in the underlining Type A link diagrams, then $D$ remains a strong base link diagram, where the reduction numbers of $D$ are the sum of the respective reduction numbers of $D_b$ and the reduction numbers of the attached interlocked R-pattern sequences as described above.
\end{theorem}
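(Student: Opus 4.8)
The plan is to follow the template established in the proofs of Theorems \ref{cycle_theorem} and \ref{multipath_theorem1}: verify the two defining identities (\ref{E1}) and (\ref{e1}) for $D$ with the claimed reduction numbers, and then observe that the argument never uses any special feature of the underlining Type A diagrams, so that $D$ is in fact \emph{strong}. The class of diagrams under consideration is closed under taking mirror images, and (\ref{mirro_H}) interchanges the two identities (with $s\mapsto s$, $w\mapsto -w$ and $r^+\leftrightarrow r^-$, and $E(D)=-e(D^\p)$ for the mirror $D^\p$). Hence it suffices to establish the single identity $e(D)=-s(D)-w(D)+1+2r^+(D)$; the formula for $E(D)$ follows by applying it to $D^\p$. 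Here the positive-side R-patterns are the ones contributing to $r^+(D)$, while the negative-side interlocked patterns and $D_b$ persist through the computation.

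First I would record the reduction count, as prescribed by Remark \ref{r_est}: using the rerouting of Figure \ref{RpatternReduction}, an R-pattern with strings of $2k_1,\dots,2k_m$ lone crossings can be reduced by $\sum_j k_j$, and I must check that interlocked patterns on opposite sides of $C$ can be reduced \emph{simultaneously}, so the total reduction is additive and equals $r^+(D_b)$ plus the pattern contributions. The lower bound $e(D)\ge -s(D)-w(D)+1+2r^+(D)$ is then immediate from Lemma \ref{lowerbound_lemma}, so the real content is the matching upper bound together with the identification of the top $z$-degree coefficient $p_0^\ell(D)$.

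For the upper bound I would use a nested induction, the outer one on the number of interlocked R-patterns and the inner one on the number of positive lone crossings in the pattern currently being removed, resolving one such crossing by (\ref{Skein1}) exactly as in Cases 2 and 3 of the proof of Theorem \ref{cycle_theorem}. Smoothing ($D_0$) followed by a Reidemeister I move, and flipping ($D_-$) followed by a Reidemeister II move, each return a diagram of the same Type II form on $D_b$ but with a strictly simpler positive-side pattern (a shorter string, one fewer string, or one fewer R-pattern); tracking $s$, $w$, $r^+$ and invoking the inner hypothesis yields two contributions, the $D_0$ branch realizing the extreme $a$-power and the $D_-$ branch lying strictly above it. At the base the surviving leaf is a connected sum of $D_b$-with-its-negative-side-patterns — a Type II attachment with strictly fewer R-patterns, hence a strong base link diagram by the outer hypothesis — together with several elementary torus links $T_m$ coming from the multiple-crossing attaching circles. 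I evaluate $e$ and $p_0^\ell$ of this leaf using the \emph{strongness} of $D_b$ and the additivity in Theorem \ref{connectedsumtheorem}, and the $T_m$ factors by Theorem \ref{main_lemma}. Because smoothing two crossings and flipping one produce the same reduced leaf while the former raises the $z$-degree by one, the maximal-smoothing leaf carries the unique highest power of $z$, pinning down $p_0^\ell(D)$.

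The hard part is the interlocking. The rerouting that achieves the reduction wraps strands around previously modified strings (Figure \ref{RpatternReduction}), and I must verify that when two R-patterns overlap on opposite sides of $C$ these reroutings do not obstruct one another, so the reduction numbers genuinely add. Equivalently, on the polynomial side, after each skein resolution the resulting diagram must \emph{still} be a legitimate sequence of interlocked R-patterns — so that the inductive hypothesis applies — precisely at the critical lone crossings marked by the arrows in Figure \ref{ValidAttachment}. This is exactly where the structural restrictions in the definition of a Type II attachment, and the allowance of exception (i) in the Type II(i) case, are forced: the single extra lone crossing permitted by exception (i) is what keeps the first/last attaching-circle junction in admissible form after a crossing is smoothed. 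Managing these boundary junctions, rather than the bulk of a string (which reproduces the Type B and Type M computations already carried out), is where the genuine care is needed.
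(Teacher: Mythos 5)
Your proposal follows essentially the same route as the paper: the paper proves the result by first establishing the single-R-pattern case (Lemma \ref{TypeD_theoremsingleR}) via induction on the number of attaching circles, resolving a lone crossing by (\ref{Skein1}) so that $D_-$ and $D_0$ reduce to simpler diagrams of the same form, using Lemma \ref{lowerbound_lemma} for one bound and Theorems \ref{main_lemma} and \ref{connectedsumtheorem} at the leaves, and then iterates this one R-pattern at a time in a specific order (right to left through the interlocked sequence) — which is exactly the ordering issue you correctly flag as the delicate point. Your mirror-symmetry reduction to the single identity for $e(D)$ and your slightly different induction parametrization are cosmetic variations; the substance matches.
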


We first establish the Type II attachment operation for a single R-pattern.

\begin{lemma}\label{TypeD_theoremsingleR}
Let $D$ be a link diagram obtained by an attachment operation of a single R-pattern on a strong base link diagram $D_b$ with possibly exception (i), then $D$ remains a strong base link diagram.
\end{lemma}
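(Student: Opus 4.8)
The plan is to establish the two defining equalities (\ref{E1}) and (\ref{e1}) for $D$ and to verify that the argument never uses any special feature of the underlining Type A diagrams of $D_b$, so that $D$ is strong in the sense of Definition \ref{strong_definition}. By the mirror relation (\ref{mirro_H}) I may assume that every crossing of the attached R-pattern is positive; then $r^-(D)=r^-(D_b)$, while the rerouting realized in Figure \ref{RpatternReduction} gives $r^+(D)=r^+(D_b)+\sum_{1\le j\le m}k_j$. Lemma \ref{lowerbound_lemma} already supplies the inequalities $E(D)\le s(D)-w(D)-1-2r^-(D)$ and $e(D)\ge -s(D)-w(D)+1+2r^+(D)$, so the whole task reduces to proving the matching reverse inequalities, and---crucially for the inductions below and for the later Montesinos application---to pin down the extremal coefficients $p_0^h(D)$ and $p_0^\ell(D)$ and confirm that they do not vanish.

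The maximum-power equality $E(D)=s(D)-w(D)-1-2r^-(D_b)$ is the easier half, and I would dispatch it first by induction on $\sum_{1\le j\le m}k_j$. Applying the skein relation (\ref{Skein1}) to a lone crossing interior to a string, the branch $D_-$ simplifies via a Reidemeister II move to an R-pattern with that $k_j$ lowered by one, while $D_0$ simplifies via Reidemeister I moves to a connected sum of a smaller R-pattern configuration with an elementary torus link $T_\gamma$. Because the attached crossings are all positive they never participate in an $r^-$ reduction, so---exactly as in the $E$-computations of Theorems \ref{cycle_theorem}, \ref{multipath_theorem1} and \ref{TypeI_theorem}---the $D_0$ branch dominates and produces the claimed value of $E(D)$ (tracking $p_0^h$ along the way), with $D_b$ furnishing the base case through its strong base property.

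The minimum-power equality $e(D)=-s(D)-w(D)+1+2r^+(D)$ is the heart of the matter. Here I would induct on the number $m$ of strings, and within a fixed string on $k_j$, processing the pattern one bubble at a time from one end. At a generic step I resolve a lone crossing using (\ref{Skein1}); the branch that lowers $k_j$ by one is controlled by the induction hypothesis, and once a string has been shrunk to a single attaching crossing I switch to the multiple-crossing branching of Figure \ref{Case1Figure}, just as in Case~1 of the proof of Theorem \ref{cycle_theorem}, to detach an attaching circle $C_j$ from $C$. The aim at each stage is to exhibit a single surviving summand $W(D^j)H(\tilde D^j)$ of the expansion (\ref{H_exp}) whose lowest $a$-power equals $-s(D)-w(D)+1+2r^+(D)$ and whose $p_0^\ell$ carries a strictly larger $z$-degree than any competing summand with the same $a$-power; propagating $p_0^\ell$ through the induction (as in Cases~1--3 of Theorem \ref{cycle_theorem}) then rules out cancellation and yields the equality together with an explicit formula for $p_0^\ell(D)$.

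The step I expect to be the main obstacle is maintaining this bookkeeping across the transitions between consecutive strings---that is, at the attaching circles, where the cycle structure along the arc of $C$ changes---and, more delicately, in the presence of exception (i), where $C$ additionally shares lone crossings with other Seifert circles of $D_b$ between the last two anchoring locations (left of Figure \ref{exceptions}). In that situation the resolving tree must interact with crossings belonging to $D_b$ itself, and I would argue that because $D_b$ is a \emph{strong} base link diagram these extra crossings may be absorbed into a valid underlining Type A configuration, so that the dominant-branch analysis goes through unchanged; confirming that no new summand slips into the minimal $a$-power with a competing top $z$-degree is exactly where the care is needed. Finally, since none of the estimates uses any property of the underlining Type A diagrams of $D_b$ beyond strongness, the equalities hold for every admissible replacement of those diagrams, and hence $D$ is itself a strong base link diagram.
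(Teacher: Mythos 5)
Your overall strategy --- Lemma \ref{lowerbound_lemma} together with the MWF inequality for one direction, and a skein-relation induction exhibiting a dominant summand for the other --- is indeed the engine of the paper's argument, and your normalization to positive crossings and your identification of the reduction numbers are fine. But the proposal stops short exactly where the proof has to do work. The two places you yourself flag as ``the main obstacle'' (the transitions at attaching circles and exception (i)) are not resolved, and your proposed fix for exception (i) --- that the crossings $C$ shares with other Seifert circles of $D_b$ between the last two anchoring locations ``may be absorbed into a valid underlining Type A configuration'' --- cannot be right as stated. The defining feature of exception (i) is that the end circle of the R-pattern is attached by a lone crossing in that region, creating a cycle of Seifert circles that runs through circles of $D_b$; the configuration is therefore precisely \emph{not} a mere modification of the underlining Type A diagrams, and strongness of $D_b$ gives you nothing directly. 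The paper instead handles exception (i) concretely in the two-attaching-circle case by a sub-induction on $k$: resolving a lone crossing of the string sends $D_-$ to the one-attaching-circle case and $D_0$ to a connected sum of $D_b$ with an elementary torus link, both already known to be base link diagrams (Theorems \ref{connectedsumtheorem}, \ref{cycle_theorem}, \ref{TypeI_theorem}), after which the two branches' extremal $a$-powers are simply compared.

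Your induction scheme for $e(D)$ --- on the number of strings and within a string on $k_j$, invoking the branching of Figure \ref{Case1Figure} to detach each attaching circle from $C$ --- also leaves a real verification unstated: when an intermediate attaching circle $C_j$ detaches from $C$, its two adjacent strings coalesce into a single longer string (unlike in Case 1 of Theorem \ref{cycle_theorem}, the lone crossings do \emph{not} become nugatory), so the surviving branch is an R-pattern of a different shape whose reduction number and extremal coefficient must be re-examined before the induction hypothesis can be applied. The paper's proof avoids this entirely by inducting on the number $n$ of attaching circles: one resolves a lone crossing in the last string only, observes that both $D_-$ and $D_0$ are again single-R-pattern attachments with $n-1$ attaching circles (hence base link diagrams by the induction hypothesis, with known $E$ and $e$), and compares the two branches; the multiple-crossing attachments are never branched at all. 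To complete your argument you would either need to reorganize it along these lines or supply the missing analysis of the merged-string branch and of exception (i).
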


\begin{proof} Consider the case of a sequence of interlocked R-patterns. We show that we can attach one R-pattern at a time and use induction on the number $n$ of attaching Seifert circles in the sequence Seifert circles for a single R-pattern. Keep in mind that we are only considering the cases of no exceptions or exception (i).

For $n=1$, we are attaching one Seifert circle with $m\ge 2$ multiple crossings to a Seifert circle in a strong base link diagram $D_b$, so we still have a base link diagram. 

For $n=2$, let $C_1$ and $C_2$ be the two attaching Seifert circles. If $C_1$ and $C_2$ are both attached to an arc of $C$ with no other Seifert circles share crossings with $C$ between the two attaching locations of $C_1$ and $C_2$ to $C$ then the attachment creates a connected sum of the strong base link diagram $D$ with a Type B link diagram. In the special case that both $C_1$ and $C_2$ are attached using a single crossing then this will be a connected sum of the strong base link diagram $D$ with a torus link $T(2k+2,2)$.
If both $C_1$ and $C_2$ are attached to $C$ with multiple crossings then the Type B link diagram contains $2k+2$ Seifert circles and $2k$ lone crossings ($k\ge 1$). If one of the two Seifert circles is attached using a lone crossing then the Type B link diagram contains $2k+2$ Seifert circles and $2k+1$ lone crossings ($k\ge 1$).
By Theorem \ref{connectedsumtheorem} and \ref{cycle_theorem}, the statement of the theorem holds. If all crossings in the string are lone crossings then it is in fact a Type I string and the statement holds by Theorem \ref{TypeI_theorem}. 

The remaining case is exceptions (i) in Definition \ref{Rpattern}. Let us assume that $C_2$ shares a lone crossing with $C$ and $C_1$ shares $m\ge 1$ crossings with $C$. If $m=1$ then this is a Type I attachment and by Theorem \ref{TypeI_theorem} attaching the R-pattern will form a strong base link diagram.
If $m>1$ then the string contains $2k+1$ lone crossings and $2k+1$ Seifert circles. For $k=1$,  apply (\ref{Skein1}) to one of the lone crossings. $D_-$ reduces to a link diagram corresponds to the case of $n=1$ and $D_0$ reduces to the connected sum of $D_b$ and an elementary torus link, which is a base link diagram by Theorem \ref{connectedsumtheorem}. We leave it to our reader to verify that $-2+E(D_-)=s(D)-w(D)-3-2r^-(D)$, $-1+E(D_0)=s(D)-w(D)-1-2r^-(D)$, $-2+e(D_-)=-s(D)-w(D)+1+2r^+(D)$ and $-1+e(D_0)=-s(D)-w(D)+3+2r^+(D)$. It follows that $E(D)=-1+E(D_0)=s(D)-w(D)-1-2r^-(D)$ and $e(D)=-2+e(D_-)=-s(D)-w(D)+1+2r^+(D)$. In general, if the statement is true for $k=k_0\ge 1$, then for $k=k_0+1$, the same argument yields $-2+E(D_-)=s(D)-w(D)-3-2r^-(D)$, $-1+E(D_0)=s(D)-w(D)-1-2r^-(D)$, $-2+e(D_-)=-s(D)-w(D)+1+2r^+(D)$ and $-1+e(D_0)=-s(D)-w(D)+1+2r^+(D)+2k$. Thus we also have $E(D)=-1+E(D_0)=s(D)-w(D)-1-2r^-(D)$ and $e(D)=-2+e(D_-)=-s(D)-w(D)+1+2r^+(D)$.

Assume now that the statement of the theorem holds for some $n=n_0\ge 2$ and let us consider the case $n=n_0+1\ge 3$. 
The R-pattern contains more than two attaching circles, and removing the last string of Seifert circles results in a Type II attachment to $D$ with $n_0$ attaching Seifert circles. Let this link diagram be $D^\p$ and by the induction hypothesis,  $D^\p$ is a strong base link diagram. This way we can view $D$ as being obtained from $D^\p$ by attaching a string of $2k$ Seifert circles (with at least $2k+1$ lone crossings in the string), one end to $C$ and the other end to the last Seifert circle $C^{\p\p}$ in the R-pattern. Consider the case $k=1$ and apply (\ref{Skein1}) to one of the lone crossings in the attachment. Both $D_0=D^\p$ and $D_-$ (which looks like $D^\p$ with one additional crossing between $C^{\p\p}$ and $C$) can be obtained from $D_b$ by a type II attachment using $n_0$ attaching Seifert circles. Thus by the induction principle both $D_0$ and $D_-$ are base link diagrams. If we assume the $2k$ lone crossings in the string are positive (the negative case is similar) then we have
$w(D_-)=w(D^\p)+1=w(D)-2$, $s(D_-)=s(D^\p)=w(D)-2$, $r^+(D)=r^+(D^\p)+1$ and $r^-(D)=r^-(D^\p)$. We have $-2+E(D_-)=s(D^\p)-w(D^\p)-4-2r^-(D^\p)$, $-2+e(D_-)=-s(D^\p)-w(D^\p)-2+2r^+(D^\p)$, $-1+E(D_0)=s(D^\p)-w(D^\p)-2-2r^-(D^\p)$ and $-1+e(D_0)=-s(D^\p)-w(D^\p)+2r^+(D^\p)$. It follows that $E(D)=-1+E(D_0)=s(D)-w(D)-1-2r^-(D)$, $e(D)=-2+e(D_-)=-s(D)-w(D)+1+2r^+(D)$ and $D$ is a base link diagram. Furthermore we have $p^h_0(D)=zp_0^h(D^\p)$ and $p_0^\ell(D)=p_0^\ell(D^\p)$. Now use induction on $k$, we can easily show that $E(D)=s(D)-w(D)-1-2r^-(D)$, $e(D)=-s(D)-w(D)+1+2r^+(D)$ with $p^h_0(D)=zp_0^h(D^\p)$ and $p_0^\ell(D)=p_0^\ell(D^\p)$ in general. We note that these formulas do not change if the $2k$ lone crossings in the string are negative. This concludes the induction and the theorem is proved. 
\end{proof}

We now can easily prove Theorem \ref{TypeD_theorem}.
\begin{proof}
Let $D$ be obtained by a type II attachment operation of $n$ interlocked R-patterns on a strong base link diagram $D_b$.
Then we apply Theorem \ref{TypeD_theoremsingleR} $n$ times. However we have to attach the different R-patterns in a particular order. For example in Figure \ref{ValidAttachment} the interlocked R-pattern consists of five R-patterns. We start by attaching the R-pattern on the right and move from right to left through the Figure. 
\end{proof}

%In this case the proof is very similar to the case of $n=2$ where $C_2$ is attached to $C^\p$.  If the last Seifert circle in the sequence is attached to $C^\p$ as in the case of exception (ii) in Definition \ref{Inter_Rpattern}, then the proof is also very similar to the case of $n=2$ where $C_2$ is attached to $C^\p$. The details are left to the reader. This concludes the induction and the theorem is proved. {\color{red}{In both cases, $m=1$. If $k=1$, apply (\ref{Skein1}) to one of the lone crossings. $D_-$ reduces to $D_b$ and $D_0$ reduces to $D_b^\p$. It's easy to check that $E(D)=-1+E(D_0)=s(D)-w(D)-1-2r^-(D)$, $e(D)=-2+e(D_-)=-s(D)-w(D)+1+2r^+(D)$ with $p^h_0(D)=zp_0^h(D_b^\p)$ and $p_0^\ell(D)=p_0^\ell(D_b)$. Use induction on $k$, we can easily show that $E(D)=s(D)-w(D)-1-2r^-(D)$, $e(D)=-s(D)-w(D)+1+2r^+(D)$ with $p^h_0(D)=zp_0^h(D_b^\p)$ and $p_0^\ell(D)=p_0^\ell(D_b)$ in general. }}

Theorem \ref{TypeD_theorem} can be extended to include the case of exception (ii) as well with an additional condition.
Let $D_b$ be a strong base link diagram with two Seifert circles $C$ and $C^\p$ in $D_b$ sharing multiple crossings. If an interlocked R-pattern with exception (ii) is attached to $C$ such that one end Seifert circle of the pattern is attached to $C^\p$ via a lone crossing that is ``borrowed" from one of the multiple crossings between $C$ and $C^\p$ as shown  in the right side of Figure  \ref{exceptions}. In other word, in the resulting diagram (after the pattern is attached), $C$ and $C^\p$ share one less crossings than they do in $D_b$. Let $D_b^\p$ denote the link diagram obtained from $D_b$ by deleting one crossing between $C$ and $C^\p$, then we have

\begin{theorem}\label{TypeD2_theorem}
If $D_b$ and $D_b^\p$ are both strong base link diagrams, then attaching an interlocked R-pattern with exception (ii) using $C$ and $C^\p$ as defined above results in a strong base link diagram.
\end{theorem}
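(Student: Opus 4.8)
The plan is to imitate the inductive architecture of Lemma~\ref{TypeD_theoremsingleR} and Theorem~\ref{TypeD_theorem}, inserting one extra branching analysis to absorb the borrowed crossing. First I would reduce to the case of a single R-pattern carrying the exception: exactly as in the proof of Theorem~\ref{TypeD_theorem}, an interlocked sequence is attached one R-pattern at a time in the right order, and every R-pattern except the exceptional one is disposed of by Lemma~\ref{TypeD_theoremsingleR}. It therefore suffices to attach a single R-pattern to $C$ whose last Seifert circle $C^{\p\p}$ is joined to $C^\p$ by a lone crossing borrowed from the crossings shared by $C$ and $C^\p$, and to show the outcome is a strong base link diagram. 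Viewing the result through $D_b^\p$, the direct crossings between $C$ and $C^\p$ number exactly as in $D_b^\p$, and the borrowed crossing reappears as the pair (string to $C$, lone crossing $C^{\p\p}$-$C^\p$); the circles $C$, the string, $C^\p$ and one surviving $C$-$C^\p$ crossing form a single even cycle of Seifert circles, so the configuration is of the type already studied, but anchored on two circles rather than one.

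For this single pattern I would induct on the length of the last string, applying~(\ref{Skein1}) to a lone crossing of that string and writing $H(D)=a^{-2}H(D_-)+a^{-1}zH(D_0)$. The decisive structural point---and the reason both $D_b$ and $D_b^\p$ must be assumed strong---is that the two branches live over the two base diagrams. The smoothed branch $D_0$ detaches the last Seifert circle and, after the resulting Reidemeister~I simplification, presents the shortened R-pattern as an exception-free (or exception-(i)) Type~II attachment on $D_b^\p$; Theorem~\ref{TypeD_theorem} applied to the strong base diagram $D_b^\p$ then certifies that $D_0$ is itself a strong base link diagram and supplies $E(D_0)$ and $e(D_0)$ through~(\ref{E1}) and~(\ref{e1}). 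The flipped branch $D_-$ simplifies by a Reidemeister~II move to a shorter exception-(ii) configuration over $D_b$, handled by the induction hypothesis, with the base case of the induction cashing in the borrowed crossing against the full diagram $D_b$ and invoking Theorem~\ref{cycle_theorem}/Theorem~\ref{TypeD_theorem} over $D_b$.

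With both branches identified I would record $s$, $w$, and the pertinent reduction numbers for each, substitute into~(\ref{E1}) and~(\ref{e1}), and carry the skein weights $a^{-2}$ and $a^{-1}z$. The aim is to show that precisely one branch realizes the asserted top power $s(D)-w(D)-1-2r^-(D)$ and precisely one realizes the asserted bottom power $-s(D)-w(D)+1+2r^+(D)$, the competing branch contributing a strictly larger minimal (respectively strictly smaller maximal) $a$-power so that no cancellation can spoil the extremal coefficient; Lemma~\ref{lowerbound_lemma} furnishes the reverse inequalities, and the MWF inequality upgrades these degree equalities to the identity $\b(D)=s(D)-r(D)$, the reduction number $r(D)$ being that of the base diagram augmented by the reduction number $\sum_{1\le j\le m}k_j$ of the attached R-pattern. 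Because every diagram created in the branching depends on $D_b$ and $D_b^\p$ only through their being strong base link diagrams, the whole argument is unaffected by the choice of underlining Type~A diagrams, so $D$ is in fact \emph{strong}.

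The step I expect to be the main obstacle is the flipped branch $D_-$ together with the base case of the string-length induction: one must verify that flipping the borrowed crossing genuinely permits the Reidemeister~II collapse that shortens the string and returns one to an exception-(ii) diagram over $D_b$ (rather than leaving a diagram outside the inductive family), and that the borrowed crossing is correctly ``returned'' to $C$-$C^\p$ so that $D_b$ rather than $D_b^\p$ governs this branch. Equally delicate is confirming that the two extremal powers coming from the $D_0$-branch over $D_b^\p$ and the $D_-$-branch over $D_b$ do not coincide and cancel; tracking the parity of the borrowed string---so that the cycle through $C$, the string, $C^\p$ and a surviving $C$-$C^\p$ crossing is even---is what makes the reduction-number accounting, and hence the two extremal-power equalities, come out consistently.
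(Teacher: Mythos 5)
Your proposal follows essentially the same route as the paper's proof: reduce to the single R-pattern carrying the exception (handling the other interlocked patterns by Lemma~\ref{TypeD_theoremsingleR}), induct on the string length by applying~(\ref{Skein1}) to a lone crossing, and observe that the flipped branch $D_-$ collapses to a shorter exception-(ii) configuration over $D_b$ while the smoothed branch $D_0$ lives over $D_b^\p$ --- which is precisely the paper's reason for assuming both diagrams are strong base link diagrams --- with the extremal $a$-powers realized by exactly one branch in each case. The only piece you do not spell out is the auxiliary induction on the multiplicity $m$ of the crossings attaching the first circle of the string to $C$ (these are not lone crossings, so your stated skein resolution does not reach them), but the paper disposes of this by the same routine sub-induction used in Theorem~\ref{cycle_theorem}.
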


\begin{proof}
Assume that the interlocked R-pattern consists of several R-patterns. We begin by attaching the R-pattern with the exception first and then apply Theorem \ref{TypeD_theoremsingleR} repeatedly to attach the other R-patterns.
To attach the R-pattern with the exception we  are again using induction on the number $n$ of attaching circles. Let $D_b$ and $D^\p_b$ be as defined above and $D$ be the resulting diagram after the R-pattern is attached. Furthermore we assume that all crossings of the last attachment string of the R-pattern with an exception are positive and leave the negative case to the reader. Note that this implies $r^-(D_b)=r^-(D^\p_b)$ and $r^+(D^\p_b)-r^+(D_b)\le 1$ (that is the reduction number of $D^\p_b$ may have increased by one, if the two Seifert circles used to in the definition of $D_b$ and $D^\p_b$ share only two crossings). Assume $n=2$, $C_1$ is attached to $C$ and attach $C_2$ to a different Seifert circle $C^\p$ by a lone crossing which is ``borrowed" from one of the original crossings between $C$ and $C^\p$. Let $2k$ be the number of Seifert circles in the string ($k\ge 1$) and let $m\ge 1$ be the number of crossings between $C_1$ and $C$. 

Consider first the case $m=1$ and $k=1$. Apply (\ref{Skein1}) to one of the lone crossings. $D_-$ reduces to $D_b$ and $D_0$ reduces to $D_b^\p$. We leave it to our reader to verify that $-1+E(D_0)=s(D)-w(D)-1-2r^-(D)$ and $-2+E(D_-)=s(D)-w(D)-3-2r^-(D)$. Thus $E(D)=s(D)-w(D)-1-2r^-(D)$ and $p_0^h(za^{-1}H(D_0))=z p_0^h(D_b^\p)$.
We also have $-1+e(D_0)=-s(D)-w(D)+5+2r^+(D_b^\p)>-2+e(D_-)=-s(D)-w(D)+1+2r^+(D)$. Thus $e(D)=-s(D)-w(D)+1+2r^+(D)$,  $p_0^\ell(D)=p_0^\ell(D_b)$ and $D$ is a base link diagram.

Now we use induction on $k$. Let $D_{k_0}$ be the base link diagram with $m=1$ and $2k_0$ Seifert circles attached to $D_b$. We assume that all $D_{k_0}$ with $k_0\le k>1$ are base link diagrams. 
Now apply (\ref{Skein1}) to one of the lone crossing in the diagram $D=D_{k+1}$ where $m=1$ and $2(k+1)$ Seifert circles are attached $D_b$. We have that $D_-=D_k$ and $D_0=D^\p_b$ and therefore $s(D_0)=s(D)-2(k+1)$,$w(D_0)=w(D)-2(k+1)-1$, $s(D_-)=s(D)-2$ and $w(D_-)=w(D)-2$. Furthermore $r^-(D)=r^-(D_0)=r^-(D_-)$, $r^+(D)=r^+(D_-)+1$ and $r^+(D_0)+k+1 \le r^+(D)\le r^+(D_0)+k+2$.
Now we can easily show that $-1+E(D_0)=s(D)-w(D)-1-2r^-(D)$ and $-2+E(D_-)=s(D)-w(D)-3-2r^-(D)$. Thus $E(D)=s(D)-w(D)-1-2r^-(D)$ and $p_0^h(za^{-1}H(D_0))=z p_0^h(D_b^\p)$.
We also have $-1+e(D_0)=-s(D)-w(D)+4(k+1)+2r^+(D_b^\p)>-2+e(D_-)=-s(D)-w(D)+1+2r^+(D)$. Thus $e(D)=-s(D)-w(D)+1+2r^+(D)$,  $p_0^\ell(D)=p_0^\ell(D_b)$ and $D$ is a base link diagram. This proves the case for $m=1$.

If $m=2$ for any $k\ge 1$, apply (\ref{Skein1}) to one of the two crossings between $C_1$ and $C$. $D_-$ reduces to $D_b^\p$ and $D_0$ corresponds to the case $m=1$. 
Therefore $s(D_0)=s(D)=s(D_-)+2k$,$1+w(D_0)=w(D)=w(D_-)+2(k+1)$. Furthermore $r^-(D)=r^-(D_0)=r^-(D_-)$.
Notice that $r^+(D)=r^+(D_0)$ since the cycle of Seifert Circles of length four containing $C_1$ and $C$ has reduction number of one regardless of how many crossings are connecting $C_1$ and $C$. We also have $r^+(D_-)+k \le r^+(D)\le r^+(D_-)+k+1$. Now we can easily show that $-1+E(D_0)=s(D)-w(D)-1-2r^-(D)=-2+E(D_-)$. However $p_0^h(a^{-2}H(D_-))=p_0^h(D_b^\p)$ and $p_0^h(za^{-1}H(D_0))=z^2p_0^h(D_b^\p)$ by the results for $m=1$. Thus we have $E(D)=s(D)-w(D)-1-2r^-(D)$ with $p_0^h(D)=z^2p_0^h(D_b^\p)$.
We also have $-1+e(D_0)=-s(D)-w(D)+1+2r^+(D)$ and $-2+e(D_-)=-s(D)-w(D)+4k+1+2r^+(D^\p_b)\ge-s(D)-w(D)+2k-1+2r^+(D)> -1+e(D_0)$.
Thus $e(D)=-s(D)-w(D)+1+2r^+(D)$,  $p_0^\ell(D)=z p_0^\ell(D_b)$ and $D$ is a base link diagram. This proves the case for $m=2$.

Now assume that $D_{m,k}$ is the diagram created by attaching a string of $2k$ Seifert circles to $D$ where the first Seifert circle $C_1$ is attached with $m\ge 3$ crossings to $C$ and the last Seifert circle is attached to a different Seifert circle $C^\p$ by a lone crossing which is ``borrowed" from one of the original crossings between $C$ and $C^\p$. In addition we assume that all $D_{m,k}$ are base link diagrams for all $k$ and all $m\le m_0$. Furthermore we assume that $p_0^h(D_{m,k})=z^m p_0^h(D_b^\p)$ and $p_0^\ell(D_{m,k})=z^{m-1} p_0^\ell(D_b)$. Now let $D= D_{m_0+1,k}$ be given and apply (\ref{Skein1}) to one of the $m_0+1$ crossings between $C_1$ and $C$. We now have $D_0 = D_{m_0,k}$ and $D_- = D_{m_0-1,k}$. Therefore $s(D_0)=s(D)=s(D_-)$, $1+w(D_0)=w(D)=w(D_-)+2$. Furthermore $r^-(D)=r^-(D_0)=r^-(D_-)$ and $r^+(D)=r^+(D_0)=r^+(D_-)$. We have that $-1+E(D_0)=-2+E(D_-)=s(D)-w(D)-1-2r^-(D)$, $p_0^h(D_0)=z^{m_0} p_0^h(D_b^\p)$ and $p_0^h(D_-)=z^{m_0-1} p_0^h(D_b^\p)$.
Therefore $E(D)=s(D)-w(D)-1-2r^-(D)$ and $p_0^h(D)=z^{m_0+1} p_0^h(D_b^\p)$.
We also have that $-1+e(D_0)=-2+e(D_-)=-s(D)-w(D)+1-2r^+(D)$, $p_0^\ell(D_0)=z^{m_0-1} p_0^\ell(D_b)$ and $p_0^\ell(D_-)=z^{m_0-2} p_0^\ell(D_b)$.
Therefore $E(D)=s(D)-w(D)-1-2r^-(D)$ and $p_0^\ell(D)=z^{m_0} p_0^h(D_b)$.  This concludes the case for $n=2$.

Assume now that the statement of the theorem holds for some $n=n_0\ge 2$ and let us consider the case $n=n_0+1\ge 3$. 
The R-pattern with exception contains more than two attaching circles, and removing the initial string of Seifert circles (that does not contain the exception) results in a Type II attachment to $D$ with $n_0$ attaching Seifert circles. Let this link diagram be $D^\p$ and by the induction hypothesis,  $D^\p$ is a strong base link diagram. This way we can view $D$ as being obtained from $D^\p$ by attaching a string of $2k$ Seifert circles (with at least $2k+1$ lone crossings in the string), one end to $C$ and the other end to the first Seifert circle $C^{\p\p}$ in the R-pattern. Consider the case $k=1$ and apply (\ref{Skein1}) to one of the lone crossings in the attachment. Both $D_0=D^\p$ and $D_-$ (which looks like $D^\p$ with one additional crossing between $C^{\p\p}$ and $C$) can be obtained from $D_b$ by a type II attachment of a R-pattern with exception using $n_0$ attaching Seifert circles. Thus by the induction principle both $D_0$ and $D_-$ are base link diagrams. If we assume the $2k$ lone crossings in the string are positive (the negative case is similar) then we have
$w(D_-)=w(D^\p)+1=w(D)-2$, $s(D_-)=s(D^\p)=w(D)-2$, $r^+(D)=r^+(D^\p)+1$ and $r^-(D)=r^-(D^\p)$. We have $-2+E(D_-)=s(D^\p)-w(D^\p)-4-2r^-(D^\p)$, $-2+e(D_-)=-s(D^\p)-w(D^\p)-2+2r^+(D^\p)$, $-1+E(D_0)=s(D^\p)-w(D^\p)-2-2r^-(D^\p)$ and $-1+e(D_0)=-s(D^\p)-w(D^\p)+2r^+(D^\p)$. It follows that $E(D)=-1+E(D_0)=s(D)-w(D)-1-2r^-(D)$, $e(D)=-2+e(D_-)=-s(D)-w(D)+1+2r^+(D)$ and $D$ is a base link diagram. Furthermore we have $p^h_0(D)=zp_0^h(D^\p)$ and $p_0^\ell(D)=p_0^\ell(D^\p)$. Now use induction on $k$, we can easily show that $E(D)=s(D)-w(D)-1-2r^-(D)$, $e(D)=-s(D)-w(D)+1+2r^+(D)$ with $p^h_0(D)=zp_0^h(D^\p)$ and $p_0^\ell(D)=p_0^\ell(D^\p)$ in general. We note that these formulas do not change if the $2k$ lone crossings in the string are negative. This concludes the induction and the theorem is proved. 
\end{proof}

\section{Applications and examples}\label{s5}

A formula for the braid index of two bridge links already exists and can be found in a standard textbook on knot theory \cite{Crom}. However this formula is based on particular diagrams of the of the two bridge link that are often non minimal diagrams. In this section, we offer a new approach that is always based on a minimal diagram of the two bridge link and that can be extended to a larger class of links - namely the alternating Montesions links.

\medskip
\subsection{Application to two bridge links}
\label{twobridgelinks}

Let $K=b(\alpha,\beta)$ be a two-bridge link (or 4-plat or rational link), where $0<\beta<\alpha$
and $\alpha$, $\beta$ are co-prime integers. A vector $(a_1,a_2,...,a_n)$
is called a {\it standard continued fraction decomposition} of $\frac{\beta}{\alpha}$ if $n$ is odd and all $a_i>0$ and
$$
\frac{\beta}{\alpha}=\frac{1}{a_1+\frac{1}{a_{2}+\frac{1}{.....\frac{1}{a_n}}}}. 
$$
%The standard continued fraction expansion of $\frac{\beta}{\alpha}$ is unique if we require that $a_1>1$. 
It may be necessary to allow $a_n=1$ in order to guarantee that the length of vector $(a_1,a_2,...,a_n)$ is odd and under these conditions the standard continued fraction expansion of $\frac{\beta}{\alpha}$ is unique.
A standard diagram of a two bridge link $b(\alpha,\beta)$ given by the vector $(a_1,a_2,...,a_n)$ is shown in Figure \ref{2bridgeone}, where the rightmost block of crossings corresponds to the $a_1$ entry. Such a diagram is also called a standard 4-plat diagram. Furthermore, without loss of generality for a standard diagram we will assign the component corresponding to the long arc at the bottom of Figure \ref{2bridgeone} the orientation as shown. This is immaterial in the case of a two bridge knot since two bridge knots and links are invertible. In the case when the two bridge link has two components, there are two choices for the orientation of the other component. We will assume that an orientation has been given to the other component, but there is no need for us to specify which one since our goal is to develop a formula for the braid index that works for both cases. We also note that both a two bridge knot or link $L$ and its mirror image $\bar{L}$ have standard continued fraction decomposition using only positive values in the vector $(a_1,a_2,...,a_n)$.

%
%Since passing to the mirror image does not change the braid index, for our purpose it is sufficient to work with only the standard diagrams. Thus we will assume that all two bridge link diagrams are presented in the standard form in the rest of this paper.  A two bridge link diagram satisfying the above conditions is said to be {\em normal}. We will only work with normal two bridge link diagrams in this paper, mainly for the sake of simplicity in the formulation of the braid index. 
\begin{figure}[htb!]
\includegraphics[scale=1]{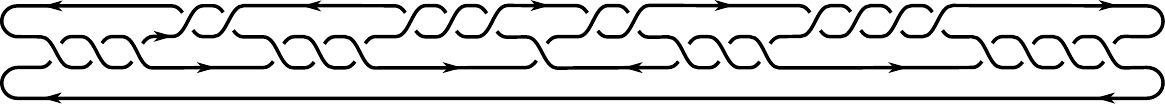}
\caption{The two bridge link $b(17426,4117)$ given by $(4,4,3,2,1,3,3,2,3)$.}
\label{2bridgeone}
\end{figure}

Since all crossings corresponding to a given $a_i$ have the same crossing sign under the given orientation, we will define a signed vector $(b_1,b_2,...,b_n)$ where $b_i =\pm a_i$ with its sign given by the crossing sign of the crossings corresponding to $a_i$. For example, for $K=b(17426,4117)$ with the orientation shown in Figure \ref{2bridgeone} we obtain the signed vector $(-4,4,-3,-2,-1,3,3,2,3)$. Let us group the consecutive $b_j$'s with the same signs together and call these groups {\em blocks} denoted by $B_i$. For example, we have four blocks $B_1=(-4)$, $B_2=(4)$,  $B_3=(-3,-2,-1)$ and $B_4=(3,3,2,3)$ for the link given in Figure \ref{2bridgeone}. 

Next we consider the Seifert circles of the standard diagram $D$ of a two bridge link $K=(b_1,b_2,...,b_n)$ (where $n=2k+1$ is odd), see Figure \ref{2bridgeone} for an example. Let $C$ be the Seifert circle that contains the long arc on the bottom of the diagram. $D$ can be realized as a Type II attachment to $C$ satisfying the following conditions: 

\noindent
\begin{itemize}
\item Each R-pattern attached to the outside of $C$ corresponds to a positive block in $(b_1,b_2,...,b_n)$.

\item Each R-pattern attached to the inside of $C$ corresponds to a negative block in $(b_1,b_2,...,b_n)$.

\item The R-patterns are interlocked.

\item  Each $b_{2j+1}>0$, and each $b_{2j}<0$ corresponds to crossings between a Seifert circle in an R-pattern and $C$. That is, each of these corresponds to an attaching circle that is attached with $|b_j|$ crossings to $C$.

\item  For each $b_{2j}>0$ (and each $b_{2j+1}<0$) there are $|b_{2j}|$ ($|b_{2j+1}|$) lone crossings between the attaching Seifert circles.

\item Each positive block starts and ends with a positive $b_{2j}$ unless it is the first or the last block in $(b_1,b_2,...,b_n)$.

\item Each negative block starts and ends with a negative  $b_{2j+1}$.

\item A middle positive block either contains a single positive $b_{2j}$ that is even, or is of the form $(b_{2j},b_{2j+1},...,b_{2j+2j^\p})$ where $j^\p\ge 1$ and both $b_{2j}$ and $b_{2j+2j^\p}$ are odd.

\item A middle negative block either contains a single negative $b_{2j+1}$ that is even, or is of the form $(b_{2j+1},b_{2j+1},...,b_{2j+2j^\p+1})$ where $j^\p\ge 1$ and both $b_{2j+1}$ and $b_{2j+2j^\p+1}$ are odd.
\end{itemize}

%{\color{red} I commented the theorem out and put the statements in the above list, the theorem was too long.}

%\begin{theorem}\label{2bridge_theorem}
%Let $K=b(\alpha,\beta)$ be a normal two bridge link diagram with signed vector  $(b_1,b_2,...,b_n)$ (where $n=2k+1$ is odd), then $K$ is obtained by a Type II attachment to the large Seifert circle $C$ corresponding to the long arc as shown at the bottom of Figure \ref{2bridgeone}. More precisely, we have the following: (i) each R-pattern attached to the outside of $C$ corresponds to a positive block in $(b_1,b_2,...,b_n)$ and each R-pattern attached to the inside of $C$ corresponds to a negative block in $(b_1,b_2,...,b_n)$; (ii) Each $b_j>0$ with odd $j$, and each $b_j<0$ with even $j$ corresponds to the crossings between a Seifert circle in an R-pattern and $C$, while the absolute values of the other $b_j$'s represent the number of lone crossings between the attaching Seifert circles; (iii) Each positive block starts and ends with a positive $b_j$ where $j$ is even unless it is the first or the last block in $(b_1,b_2,...,b_n)$; (iv) Each negative block starts and ends with a negative and  $b_j$ where $j$ is odd.
%\end{theorem}

The above statements can be explained as follows:
Since the orientation of $C$ is fixed by the orientation of the long arc at the bottom of a standard diagram, it is easy to see that the crossings correspond to positive $b_{2j+1}$'s and negative $b_{2j}$'s must be smoothed in the direction parallel (but with opposite direction) to the long arc at the bottom as shown in Figure \ref{2bridgeone}. Thus the positive $b_{2j+1}$'s and negative $b_{2j}$'s contribute to the ``medium"  Seifert circles (namely the attaching circles in the R-patterns). The orientation of the arcs in the negative $b_{2j+1}$'s and positive $b_{2j}$'s causes the smoothing in the direction ``vertical" to the large Seifert circle $C$ as shown in Figure \ref{2bridgetwo}. They will form the lone crossings in the $R$ patterns. Thus all crossings with a positive sign are on the outside of the large Seifert circle $C$ while all crossings with a negative sign are on the inside of $C$. It is thus clear that these must form blocks of crossings of the same sign, corresponding to the blocks in $(b_1,b_2,...,b_n)$ and each block $B_i$ is an R-pattern attached to $C$.
The R-patterns are interlocked because each time we switch the signs of the $b_j$ (that is we switch from a block $B_i$ to $B_{i+1}$) the large Seifert circle $C$ switches its position in the 4-plat diagram between the top string and the second string counting from the bottom, see Figure \ref{2bridgetwo}. These causes exactly one lone crossing of the block $B_i$ to be ``interlocked" with the next block $B_{i+1}$.
Furthermore if we move from right to left along the top string of the Seifert circle $C$ then when a positive (negative) bock $B_i$ ends $C$ must move from the second string (top string)) to the top string (second string from the bottom), thus the positive (negative) block $B_i$ must end with a positive $b_{2j}$ (negative $b_{2j+1}$) and the negative (positive) block $B_{i+1}$ must start with a negative $b_{2j+1}$ (positive $b_{2j}$), see Figure \ref{orientation_rational}. Thus we can reconstruct $D$ by attaching the R-patterns to $C$ one at a time as we move from right to left through the top portion of $C$ as depicted in Figure \ref{2bridgetwo}. We can use the above information to prove the following theorem.

%, while the other crossings are smoothed in the direction vertical to the long arc. Thus the positive $b_{2j+1}$'s and negative $b_{2j}$'s contribute to the ``medium"  Seifert circles (namely the attaching) and the other crossings contribute to the ``small" Seifert circles attached between the medium Seifert circles as shown in Figure \ref{2bridgetwo} (namely the Seifert circles in the strings of the R-pattern attached to each other by lone crossings). One can also verify easily that the conditions of a Type II attachment hold. The details are left to the reader.

\begin{figure}[htb!]
\includegraphics[scale=1]{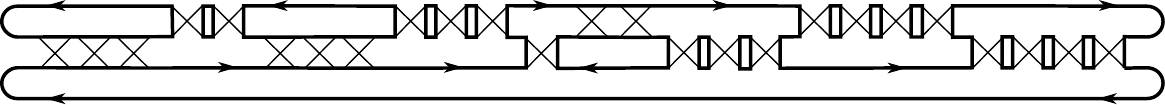}
\caption{The Seifert circle decomposition of the two bridge link in Figure \ref{2bridgeone}. We see that the patterns interlock with a lone crossing along $C$ between $B_1$ and $B_2$, $B_2$ and $B_3$, and between $B_3$ and $B_4$. We also note that there are 15 Seifert circles: the large Seifert circle $C$, 3 medium Seifert circles and 11 small Seifert circles.
}
\label{2bridgetwo}
\end{figure}

\begin{theorem}\label{2bridge_theorem} 
Let $K=b(\alpha,\beta)$ be a two bridge link diagram with signed vector  $(b_1,b_2,...,b_{2k+1})$ in the normal form, then the braid index is given by
\begin{equation}\label{2bridgeformula}
\textbf{b}(K)=1+\frac{2+\sign(b_1)+\sign(b_{2k+1})}{4}+\sum_{b_{2j}>0,1\le j\le k}\frac{b_{2j}}{2}+\sum_{b_{2j+1}<0,0\le j\le k}\frac{|b_{2j+1}|}{2}.
\end{equation}
\end{theorem}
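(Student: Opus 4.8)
The plan is to reduce the computation of $\textbf{b}(K)$ to a pure counting problem and then to evaluate the counts from the signed vector. The structural discussion preceding the theorem exhibits the standard diagram $D$ of $K$ as obtained from the single large Seifert circle $C$ (a trivial unknotted circle, which is a Type A diagram and hence a strong base link diagram) by one Type II attachment: the interlocked sequence of R-patterns $B_1,B_2,\ldots$ corresponding to the sign blocks of $(b_1,\ldots,b_{2k+1})$, attached to both sides of $C$, with the two boundary blocks possibly invoking exception (i) or (ii). I would therefore invoke Theorem \ref{TypeD_theorem} for the Type II and Type II(i) pieces and Theorem \ref{TypeD2_theorem} for any exception (ii) piece (checking that the auxiliary diagram $D_b^\p$ obtained by deleting one of the multiple crossings between $C$ and an attaching circle is again a strong base link diagram) to conclude that $D$ is a strong base link diagram, so that (\ref{E1}) and (\ref{e1}) hold. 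As explained in Section \ref{s3}, this forces $s(D)-r(D)=(E(D)-e(D))/2+1\le \textbf{b}(D)$ while $s(D)-r(D)\ge \textbf{b}(D)$ by the definition of $r(D)$; hence $\textbf{b}(K)=\textbf{b}(D)=s(D)-r(D)$, and it remains only to show that $s(D)-r(D)$ equals the right-hand side of (\ref{2bridgeformula}).

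Next I would count $s(D)$ and $r(D)$ block by block. Writing $s(D)=1+A+S$, where the $1$ counts $C$, $A$ counts the medium (attaching) circles and $S$ the small circles, the bullet-list analysis identifies $A$ as the number of entries with $b_{2j+1}>0$ or $b_{2j}<0$, while the small circles are the interior circles of the lone-crossing twist regions, so that $S=L-(\#\text{lone entries})$, where $L=\sum_{b_{2j}>0}b_{2j}+\sum_{b_{2j+1}<0}|b_{2j+1}|$ is the total number of lone crossings. For the reduction number I would use the per-R-pattern bookkeeping established earlier: an R-pattern whose consecutive attaching circles bound even strings of $2k_1,\ldots,2k_m$ lone crossings contributes $1+\sum_j 2k_j$ circles and reduction number $\sum_j k_j$, so that its net contribution to $s-r$ is $1+\sum_j k_j=1+\tfrac12(\text{its lone crossings})$. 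Summed over the blocks, the linear pieces assemble into $\tfrac12 L=\sum_{b_{2j}>0}\tfrac{b_{2j}}{2}+\sum_{b_{2j+1}<0}\tfrac{|b_{2j+1}|}{2}$, which is exactly the pair of sums appearing in (\ref{2bridgeformula}).

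What remains, and what I expect to be the main obstacle, is the exact accounting of the constant term, i.e.\ showing that the residual contribution (the $1$ from $C$ together with all the per-block $+1$'s, as modified by the interlockings and the two end blocks) collapses to $1+\tfrac{2+\sign(b_1)+\sign(b_{2k+1})}{4}$. The interlocking of consecutive R-patterns shares exactly one lone crossing per block boundary and so redistributes the even-string lengths across the boundary, merging the per-block constants; the delicate part is that the first and last blocks behave asymmetrically according to whether they lie outside $C$ (a positive end, which supplies an attaching circle) or inside $C$ (a negative end, which closes a short cycle directly through $C$), and this is precisely what $\sign(b_1)$ and $\sign(b_{2k+1})$ record. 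I would organize this as a case analysis on the four sign combinations of $(b_1,b_{2k+1})$, verifying in each case that the boundary blocks contribute the claimed quarter-integer correction and that the parity conditions built into the normal form make the half-integer parts of the two sums combine with this correction to give an integer. A reassuring sanity check is the family $b(n,1)=T(2,n)$: for $b_1=+n$ the single attaching circle gives $s=2$, $r=0$ and the formula returns $\textbf{b}=2$, while for $b_1=-n$ (even $n$) the diagram is one negative cycle of length $n$, so Theorem \ref{cycle_theorem} gives $s=n$ and $r=n/2-1$, whence $\textbf{b}=n/2+1$, in agreement with (\ref{2bridgeformula}).
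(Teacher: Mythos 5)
Your proposal follows essentially the same route as the paper: reduce to $\textbf{b}(K)=s(D)-r(D)$ by realizing the standard diagram as a Type II attachment of interlocked R-patterns to the huge Seifert circle $C$ and invoking the strong-base-diagram theorems, then account for $s-r$ block by block, with the linear terms assembling into the two sums and the end blocks supplying the constant $\frac{2+\sign(b_1)+\sign(b_{2k+1})}{4}$. The case analysis you defer on the first and last blocks is exactly the content of the paper's proof, which checks (using the forced parities of the end entries) that a positive end block contributes an extra $\frac{1+\sign(b_1)}{4}=\tfrac12$, a negative end block contributes $0$, and middle blocks contribute no constant.
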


\begin{proof} 
By Theorem \ref{TypeD_theorem}, $\textbf{b}(K)$ is given by the total number of Seifert circles in it minus its reduction number. Let us consider the following (exhaustive list of) possibilities for the contributions of a block to the total count of Seifert circles in $K$ and to the total reduction number of $K$. Formula (\ref{2bridgeformula}) follows once we combine these cases.

(1) $K$ contains a single R-pattern with only positive crossings. By the counting given in the paragraph before Theorem \ref{TypeD_theorem}, the total number of Seifert circles in the R-pattern is $1+\sum_{b_{2j}>0,1\le j\le k}{b_{2j}}$ and its reduction number is $\sum_{b_{2j}>0,1\le j\le k}\frac{b_{2j}}{2}$. Since $C$ is the only Seifert circle in the base link diagram, we have $\textbf{b}(K)=2+\sum_{b_{2j}>0,1\le j\le k}\frac{b_{2j}}{2}$, which is the same as (\ref{2bridgeformula}) as there are no negative blocks.

(2) $(b_1,b_2,...,b_{2k+1})$ contains more than one block and the first block $B_1=(b_1,b_2,...,b_{2j})$ is positive. In this case it is necessary that $b_{2j}$ is odd, see Figure \ref{orientation_rational}. (If $b_{2j}$ is even then there will be a cycle of odd length in the Seifert graph of $K$, which is not possible since the Seifert graph is bipartite.) In this case $B_1$ contributes a total of $\sum_{1\le i\le j}{b_{2i}}$ Seifert circles with a reduction number $(-1+\sum_{1\le i\le j}{b_{2i}})/2$. Thus the contribution of $B_1$ to $\textbf{b}(K)$ is $(1+\sum_{1\le i\le j}{b_{2i}})/2=\frac{1+\sign(b_1)}{4}+\sum_{1\le i\le j}({b_{2i}}/2)$. 

(3) Similarly, if $(b_1,b_2,...,b_{2k+1})$ contains more than one block and the last block $B_m$ (which is of the form $(b_{2j^\p},b_{2j^\p+1},...,b_{2k+1})$) is positive, then $B_m$ contributes $\frac{1+\sign(b_{2k+1})}{4}+\sum_{j^\p\le i\le k}({b_{2i}}/2)$ to $\textbf{b}(K)$.

(4) In all other cases a positive block $B_{j_1}$ either contains a single term $b_{2j}>0$ with $b_{2j}$ being even, or it is of the form $B_{j_1}=(b_{2j},b_{2j+1},...,b_{2j+2j_1})$ with $b_{2j}$ and $b_{2j+2j_1}$ both odd. We leave it to our reader to verify that the contribution of $B_{j_1}$ to $\textbf{b}(K)$ is $\sum_{0\le i\le j_1}({b_{2j+2i}}/2)$ (notice that this includes the case $j_1=0$).

(5) Similarly, any negative block $B_{j_2}=(b_{2j^\p+1},b_{2j^\p+2},...,b_{2j^\p+1+2j_2})$ (with the possibility that $j_2=0$) contributes a term of the form 
$\sum_{0\le i\le j_2}({b_{2j^\p+1+2i}}/2)$ to $\textbf{b}(K)$. Notice that in the case when $B_{j_2}$ is the first or the last block, then the terms $\frac{1+\sign(b_{1})}{4}$ or $\frac{1+\sign(b_{2k+1})}{4}$ will be zero and do not change formula (\ref{2bridgeformula}).
\end{proof}

\begin{remark}{\em
If a minimum two bridge link diagram $K$ is not represented in its standard form as shown in Figure \ref{2bridgeone}, but rather in the mirror image of Figure \ref{2bridgeone}, then a slight modification of the above proof leads to the following formulation, assuming that $(b_1^\p,b_2^\p,...,b_{2k+1}^\p)$ is the signed vector of $K$ in this presentation:
\begin{equation}\label{2bridgeformula_2}
\textbf{b}(K)=1+\frac{2-\sign(b_1^\p)-\sign(b_{2k+1}^\p)}{4}+\sum_{b^\p_{2j}<0,1\le j\le k}\frac{|b^\p_{2j}|}{2}+\sum_{b^\p_{2j+1}>0,0\le j\le k}\frac{b_{2j+1}^\p}{2}.
\end{equation} 
In particular, if $K$ has signed vector $(b_1,b_2,...,b_{2k+1})$ when it is in the standard form, then the above formula applies to its mirror image with signed vector $(b_1^\p,b_2^\p,...,b_{2k+1}^\p)=(-b_1,-b_2,...,-b_{2k+1})$. We have
\begin{eqnarray*}
\textbf{b}(\overline{K})&=&1+\frac{2-\sign(b_1^\p)-\sign(b_{2k+1}^\p)}{4}+\sum_{b^\p_{2j}<0,1\le j\le k}\frac{|b^\p_{2j}|}{2}+\sum_{b^\p_{2j+1}>0,0\le j\le k}\frac{b_{2j+1}^\p}{2}\\
&=&1+\frac{2+\sign(b_1)+\sign(b_{2k+1})}{4}+\sum_{b_{2j}>0,1\le j\le k}\frac{b_{2j}}{2}+\sum_{b_{2j+1}<0,0\le j\le k}\frac{|b_{2j+1}|}{2}\\
&=&\textbf{b}(K),
\end{eqnarray*}
as expected.
}
\end{remark}

\begin{figure}[htb!]
\includegraphics[scale=.5]{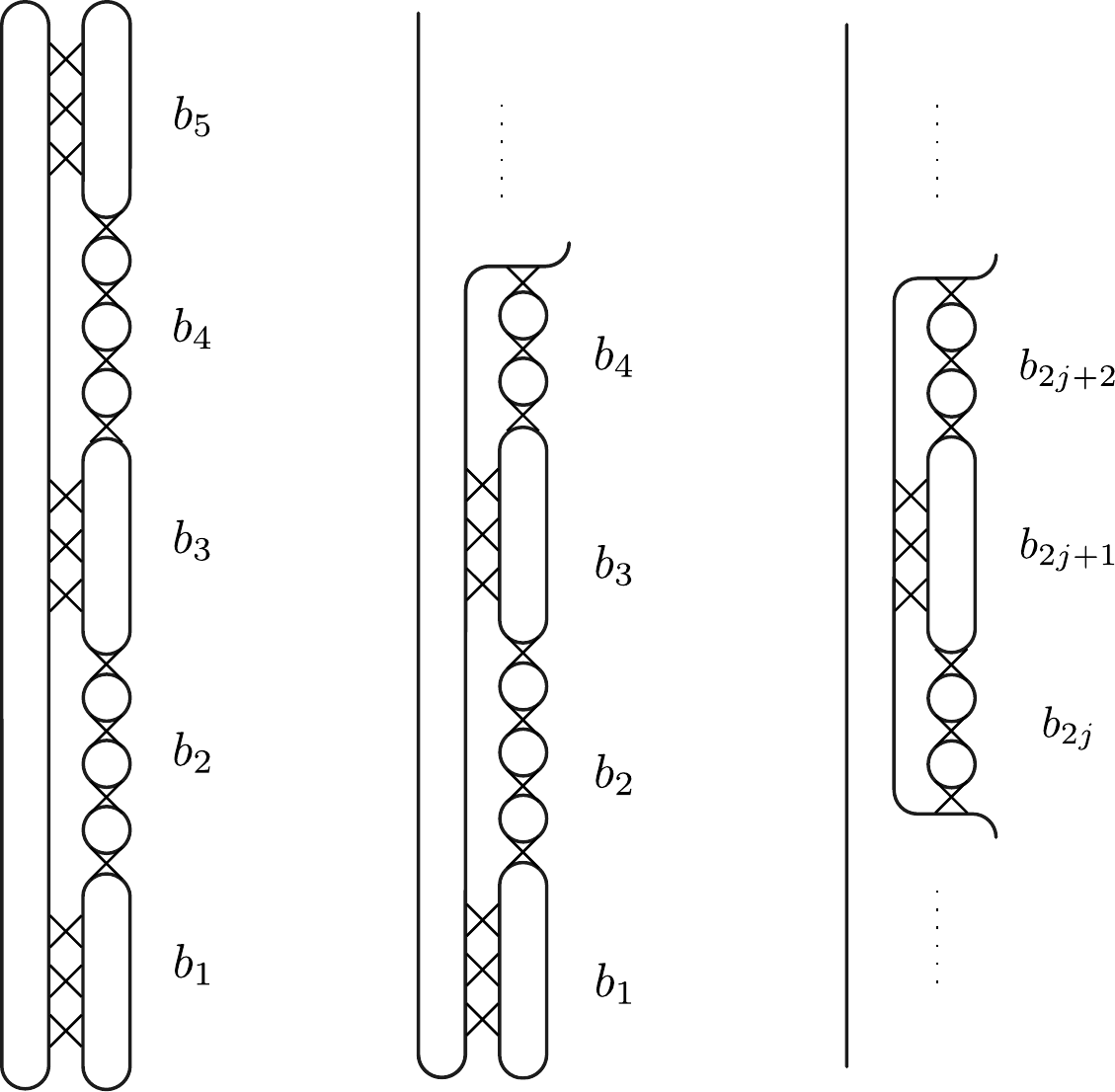}
\caption{From left to right: 1. $(b_1,b_2,...,b_{2k+1})$ consists of a single positive block; 2. A positive first block followed by a negative block must end with an odd $b_{2j}$; 3. A middle positive block must start and end with odd $b_{2j}$'s.
}
\label{orientation_rational}
\end{figure}

\begin{example}{\em
Consider the two bridge link with signed vector $(-4,4,-3,-2,-1,3,3,2,3)$ given in Figure \ref{2bridgeone}. We have 
\begin{eqnarray*}
\textbf{b}(K)&=&1+(2+\sign(b_1)+\sign(b_{2k+1}))/4+\sum_{b_{2j}>0,1\le j\le k}b_{2j}/2+\sum_{b_{2j+1}<0,0\le j\le k}|b_{2j+1}|/2\\
&=&1+1/2+(2+3+4)+(1+3+4)/2=10.
\end{eqnarray*}
If we reverse the orientation of one component in the link then $b(17426,4117)$ has a signed vector
$(4,4,3,2,1,3,-3,-2,-3)$. In which case we have
\begin{eqnarray*}
\textbf{b}(K)&=&1+(2+\sign(b_1)+\sign(b_{2k+1}))/4+\sum_{b_{2j}>0,1\le j\le k}b_{2j}/2+\sum_{b_{2j+1}<0,0\le j\le k}|b_{2j+1}|/2\\
&=&1+1/2+(3+2+4)/2+(3+3)/2=9.
\end{eqnarray*}}
\end{example}

\begin{remark}{\em 
A different formulation of the braid index of a two bridge link was obtained by Murasugi \cite{Mu} using an even decomposition of the rational number $\beta/\alpha$ that defines the link. In a future paper we shall establish the relationship between the two formulations using a direct combinatorical approach \cite{DEH2018}. One advantage of the formula in Theorem \ref{2bridge_theorem}  is that it uses a minimal diagram of a rational link that is directly based on the Conway notation in the knot table. More significantly, the real advantage of this formulation it that it provides crucial step toward the complete formulation of the braid index of alternating Montesinos links, see the next subsection.}
\end{remark}

\subsection{Application to alternating Montesinos links}
\underline{The definition of a Montesinos link.} The theorems we proved in Section \ref{s4}, as well as in the last subsection, have prepared us to tackle a much larger family of links, namely the (oriented) alternating Montesinos links. In general, a Montesinos link $L=M(\beta_1/\alpha_1,\ldots, \beta_k/\alpha_k,e)$ is a link with a diagram as shown in Figure \ref{Montesinos}, where each diagram within a topological circle (which is only for the illustration and not part of the diagram) is a rational tangle $A_j$ that corresponds to some rational number $\beta_j/\alpha_j$ with $|\beta_j/\alpha_j|<1$ and $1\le j\le k$ for some positive integer $k$, and $e$ is an integer that stands for an arbitrary number of half-twists, see Figure \ref{Montesinos}.
If the Montesinos link is alternating then all fractions $\beta_j/\alpha_j$ have the same sign and this is matched by the sign of $e$  representing the $|e|$ half-twists. As in the case of two bridge knots the sign of $e$ and the $\beta_j/\alpha_j$ should not be confused with the sign of individual crossings, i.e. for example the signs of the crossings represented by $e$ may not coincide with the sign of $e$. The sign of the crossings represented by $e$ dependents on the orientation that the two strings in the $e$-half twists have.
Since we are only concerned with the braid index of an alternating Montesinos link in this paper, we will assume that $\beta_j/\alpha_j>0$ for each $j$ and that the crossings in the tangle diagrams are as chosen in a standard drawing of two bridge links - for more details see below. Notice that if $k=2$ then the Montesinos link is actually a 2-bridge link. However we shall not require that $k\ge 3$ since our formula will hold for the case $k=2$ as well. 
A classification of Montesinos links exists including both alternating and non-alternating Montesinos links and can be found in \cite{B}.
Without loss of generality, we will assume that the top long strand in a Montesinos link diagram is oriented from right to left as shown in Figure \ref{Montesinos} since reversing the orientations of all components in a link does not change its braid index. 

\begin{figure}[htb!]
\includegraphics[scale=.4]{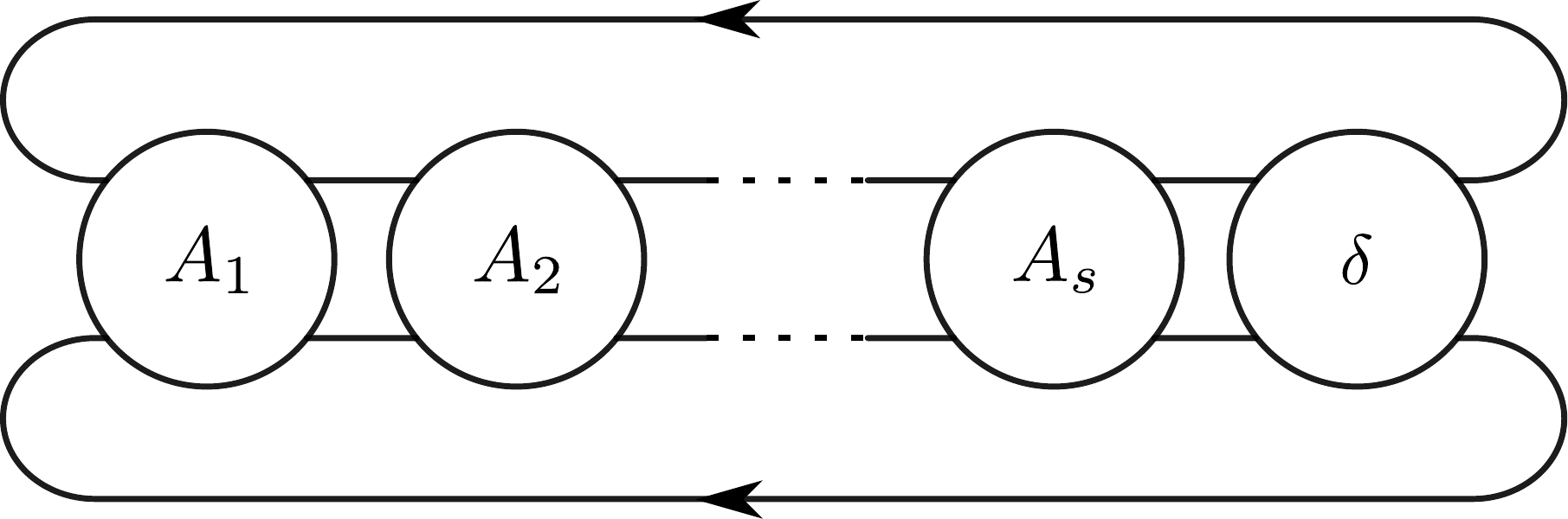}
\caption{A diagram depicting a general Montesinos link with $s$ rational tangles and $\delta$ horizontal half-twists.}
\label{Montesinos}
\end{figure}

\noindent
\underline{Standard diagrams, notations and terminology.} We will use a standard drawing for each rational tangle $A_j$ which is given by the continued fraction of the rational number $\beta_j/\alpha_j$ and contains an odd number of positive entries, exactly like what we did in the case of two bridge links in the last section. That is, we assume that $0<\beta_j<\alpha_j$ and $\beta_j/\alpha_j$ has a continued fraction decomposition of the form $(a_{1}^j,a^j_2,...,a_{2q_j+1}^j)$. The four strands that entering/exiting each tangle are marked as NW, NE, SW and SE. One example is shown at the left of Figure \ref{tangle}. Here we note that $a_{2q_j+1}^j$ is allowed to equal one if needed to make the vector $(a_{1}^j,a^j_2,...,a_{2q_j+1}^j)$ of odd length. We have
$$
\frac{\beta_j}{\alpha_j}=\frac{1}{a_{1}^j+\frac{1}{a_{2}^j+\frac{1}{.....\frac{1}{a_{2q_j+1}^j}}}}. 
$$
The closure of a rational tangle is obtained by connecting its NW and SW end points by a strand and connecting its NE and SE end points with another strand (as shown at the left side of Figure \ref{tangle}). This closure is called the denominator $D(A_j)$ of the rational tangle $A_j$. Notice that $D(A_j)$ results in a normal standard diagram of  the two bridge link $K(\alpha_j, \beta_j)$ given by the vector $(a_{1}^j,a^j_2,...,a_{2q_j+1}^j)$ (as shown at the right side of Figure \ref{tangle}) and discussed in the previous subsection. We note that the requirement that $\beta_j/\alpha_j<1$ means that each rational tangle ends with a vertical row of twists. This requirement guarantees a unique number of $e$ horizontal twists in the diagram of the Montesinos link. Finally, we define $(b_{1}^j,b^j_2,...,b_{2q_j+1}^j)$ as a signed vector similarly to the last subsection: $|b^j_m|=a^j_m$ with its sign matching the signs of the corresponding crossings in $L$ under the given orientation of $L$. We will use the notation $A_j(b_{1}^j,b^j_2,...,b_{2q_j+1}^j)$ to denote the tangle $A_j$ and the signed vector associated with it. We note that crossing signs cannot be determined by only looking at the fraction $\beta_j/\alpha_j$. One needs to take into account the orientation of the tangle $\beta_j/\alpha_j$ inherited from the orientation of $L$. 

\begin{figure}[htb!]
\includegraphics[scale=.4]{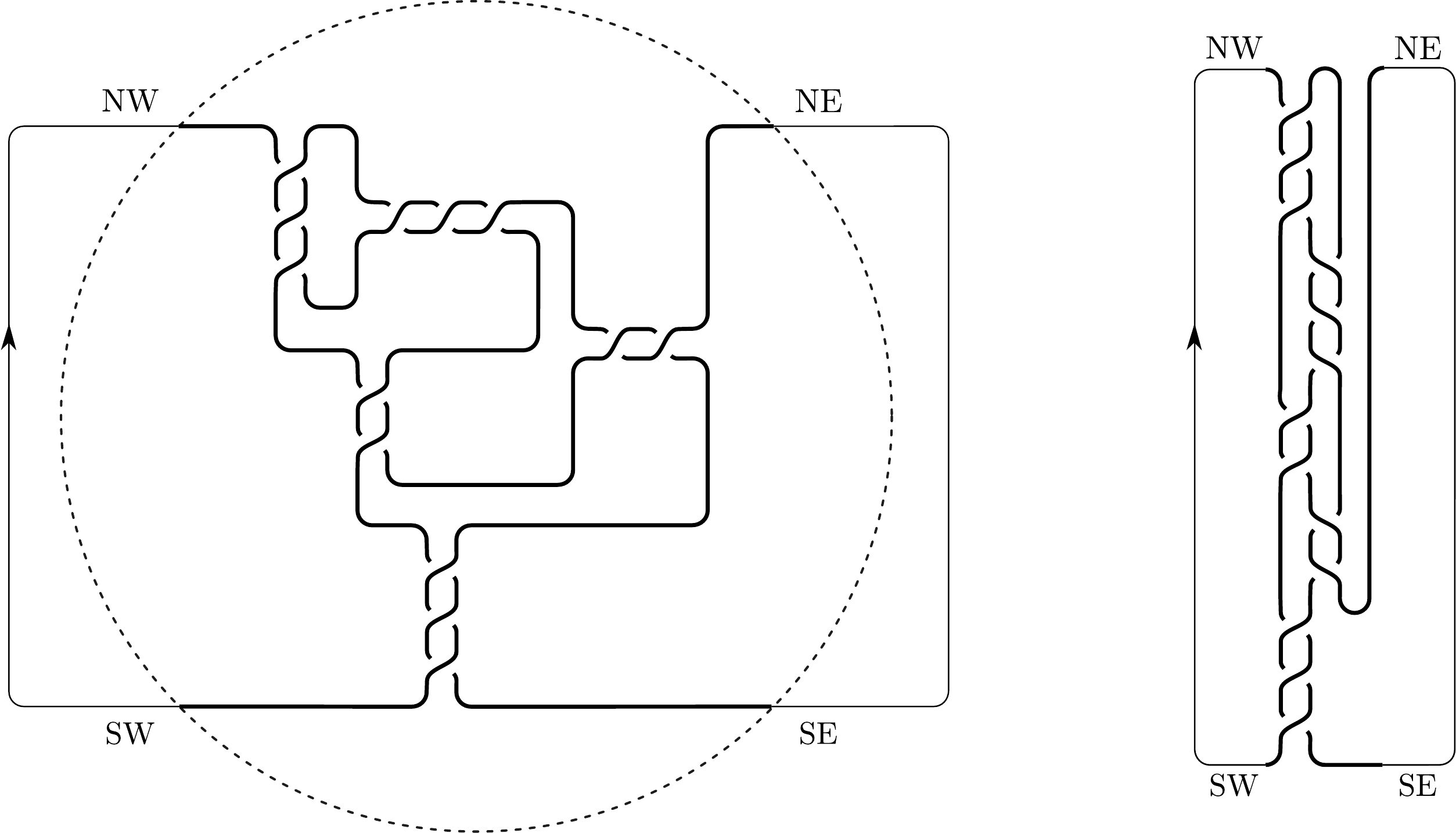}
\caption{Left: A standard drawing of the rational tangle $56/191 = A(3,2,2,3,3)$; Right: The denominator $D(A(3,2,2,3,3))$ is a standard diagram of the two bridge knot $K(191,56)$.}
\label{tangle}
\end{figure}

\noindent
\underline{Seifert circle decomposition of $L$.} Let us now consider the Seifert circle decomposition of $L$ by first examining how the arcs of Seifert circles entering and exiting each $A_j(b_{1}^j,b^j_2,...,b_{2q_j+1}^j)$ might look. Figure \ref{decomp} lists all eight possibilities for these arcs. Of course there might be lots of smaller complete Seifert circles within each tangle, but these are not shown in Figure \ref{decomp}. Observing (from Figure \ref{tangle}) that the SW--NW and SE--NE strands meet at the first crossing in $b_{1}^j$, therefore if these two strands belong to two different Seifert circles, then they must have parallel orientation (and the corresponding crossings are negative). Thus (vi) and (viii) are not possible. Furthermore, since we have assigned the top long arc in the Montesinos link diagram the orientation from right to left, (iii) is not possible either. We say that $A_j$ is of {\em Seifert Parity 1} if it decomposes as (i) in Figure \ref{decomp}, of {\em Seifert Parity 2} if it decomposes as (ii) or (iv) in Figure \ref{decomp} and of {\em Seifert Parity 3} if it decomposes as (v) or (vii) in Figure \ref{decomp}. Notice that $A_j$ is of Seifert Parity 3 if and only if $b_1^j>0$. Also, the Seifert Parity of a tangle $A_j$ depends on the orientation it inherits from $L$ and it should not be confused with the term parity of a tangle, which refers how the arcs in a tangle are connected and is a property that depends on the tangle itself alone.

\begin{figure}[htb!]
\includegraphics[scale=.4]{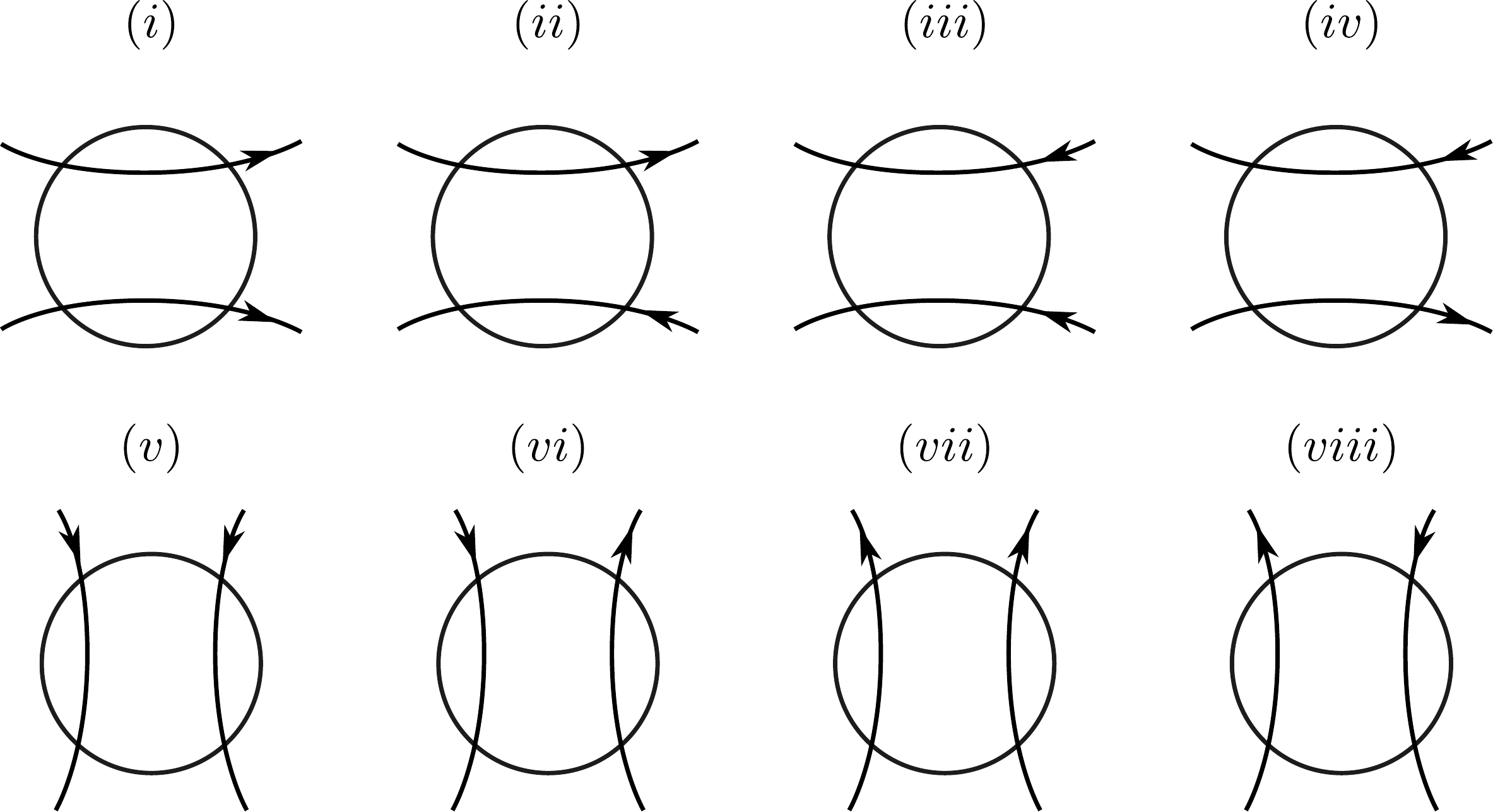}
\caption{Of the eight cases listed, (iii), (vi) and (viii) are not possible.}
\label{decomp}
\end{figure}

Thus, it becomes clear that the Seifert circle decomposition of $L$ contains the following: the Seifert circle(s) that contain the top and bottom long strands in $L$, the Seifert circles that do not contain these long strands, but contain strands that entering/exiting  one or more tangles, and Seifert circles within a tangle corresponding to the medium and small Seifert circles as defined in the last subsection. For the sake of convenience, we will call these {\em huge, large, medium} and {\em small} Seifert circles. $L$ can be classified into one of the following three classes. 

Class M1. The top long strand and the bottom long strand belong to two different huge Seifert circles and the bottom long strand also has the orientation from right to left. Notice that if $L$ is of Class M1, then every $A_j$ is of Seifert Parity 1 and all crossings in $e$ (if there are any) have negative signs. On the other hand, if one of the $A_j$'s is of Seifert Parity 1 or the crossings in $e$ are negative, then $L$ must be of Class M1. 

Class M2. The top long strand and the bottom long strand belong to two different huge Seifert circles and the bottom long strand has the orientation from left to right. Notice that $L$ is of Class M2 if and only if every $A_j$ is of Seifert Parity 2 (more precisely case (ii) in Figure \ref{decomp}) and in this case $e=0$. However, the condition $e=0$ and one of the $A_j$'s is of Seifert Parity 2 does not necessarily mean that $L$ is of Class M2.

Class B. The top long strand and the bottom long strand belong to the same huge Seifert circle. Notice that $L$ is of Class B if and only if at least one $A_j$ is of Seifert Parity 3 and all crossings in $e$ (if there are any) have positive signs.

Let $L=M(\beta_1/\alpha_1,\ldots, \beta_k/\alpha_k,e)$ be given such that $0<\beta_j/\alpha_j<1$ and that $e$ is positive (in the sense of the standard diagram drawing, not the crossing signs). In the following we will explain how to construct an alternating Montesinos link from a strong base link diagram by attaching interlocked R-patterns, which then leads us to the conclusion that all alternating Montesinos link diagrams are base link diagrams.

\noindent
\underline{Class M1 case.} Here every tangle $A_j$ is of Seifert Parity 1 and $L$ can be constructed from a Type M1 strong base link diagram $M_1$ with $k+\delta$ strings attached where all crossings in the strings are negative. Notice that in this case we have $b_1^j<0$ for all $1\le j\le k$ and all the $\delta$ crossings in $e$ are negative. More precisely, in the last $\delta$ strings of $M_1$, each one contains a single negative crossing (corresponding to the crossings in $e$). If $b_1^j$ is odd, then $b_2^j$ must be positive (in order for $A_j$ to have Seifert Parity 1), and the crossings in $b_1^j$ and $b_2^j$ smooth as shown in the right side of Figure \ref{TypeII(i)attachment}. In this case the $j$-th string contains exactly $|b_1^j|$ negative lone crossings, a valid Type M1 string. It is easy to see in this case that the rest of the tangle is attached to the top huge Seifert circle $C_1$ as Type II(i) attachment, see Figure \ref{TypeII(i)attachment}

\begin{figure}[htb!]
\includegraphics[scale=.4]{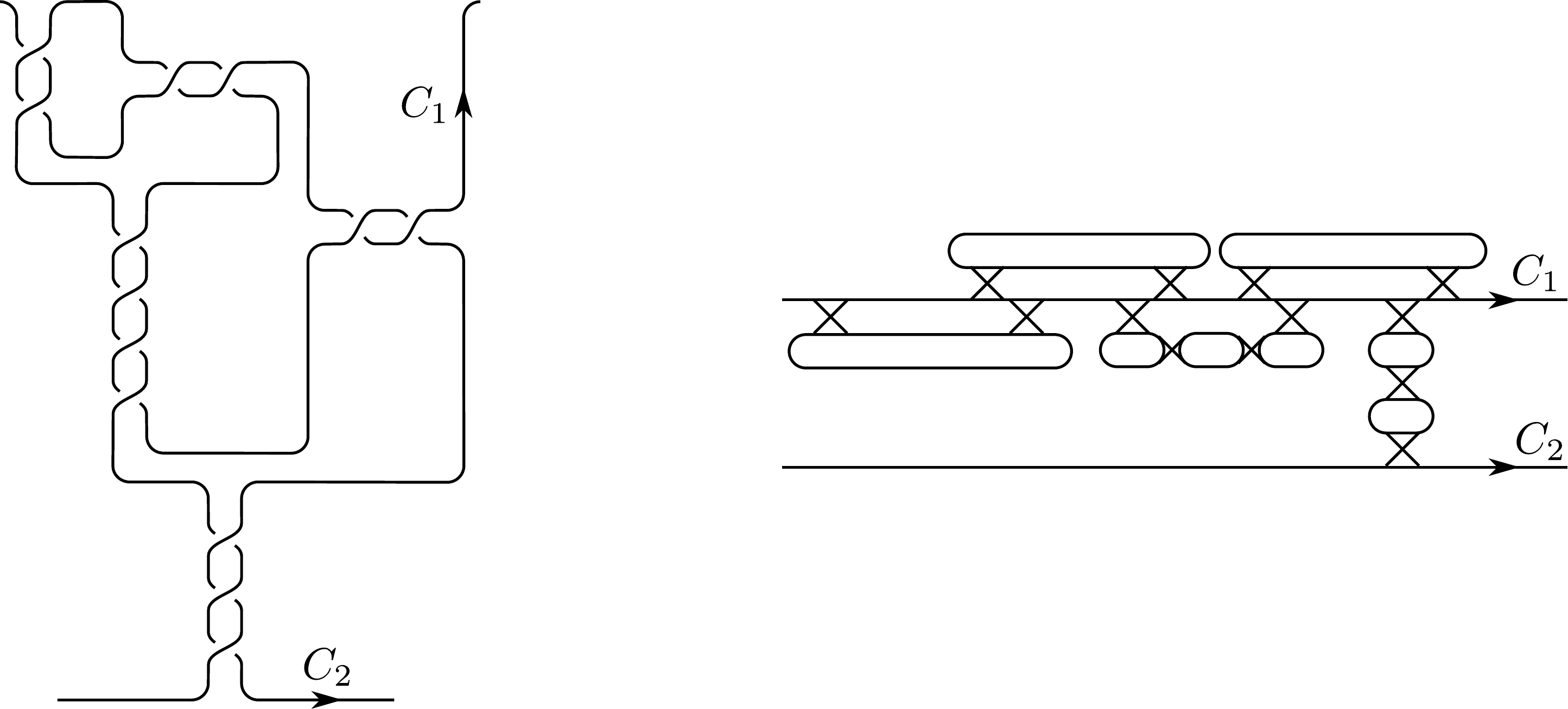}
\caption{A Type II(i) attachment. Left: The tangle $49/169=A(-3,+2,-4,+2,-2)$ with Seifert Parity 1 and odd $b_1^j$ ; Right: The realization of the tangle as an interlocked R-patterns attached to $C_1$ with exception (i), where $C_1$, $C_2$ are the two Seifert circles used to define the corresponding Type M1 link diagram. 
\label{TypeII(i)attachment}}
\end{figure} 

On the other hand, if $b_1^j$ is even, then $b_2^j$ must be negative and the crossings in $b_1^j$ and $b_2^j$ smooth as shown in Figure \ref{TypeII(ii)attachment}. In this case the $j$-th string contains $|b_1^j|$ Seifert circles and $|b_1^j|$ lone crossings (corresponding to the crossings in $b_1^j$), in addition the Seifert circle attached to $C_1$ shares $|b_2^j|+1$ negative crossings with $C_1$. Again this is a valid Type M1 string, and in this case the rest of the tangle is attached to the top huge Seifert circle $C_1$ as Type II(ii) attachment, where the added crossing is ``borrowed" by the pattern, recovering the original link diagram $D$. In Figure \ref{TypeII(ii)attachment} an arrow points to the crossing that was borrowed to create the type (ii) exception.
Since the resulting Type M1 link diagram remains a strong base link diagram with or without this additional crossing between $C_1$ and the Seifert circle in the $j$-th string attached to it, the condition of Theorem \ref{TypeD2_theorem} is met. Thus a Class M1 Montesinos link diagram remains a base link diagram.

\begin{figure}[htb!]
\includegraphics[scale=.4]{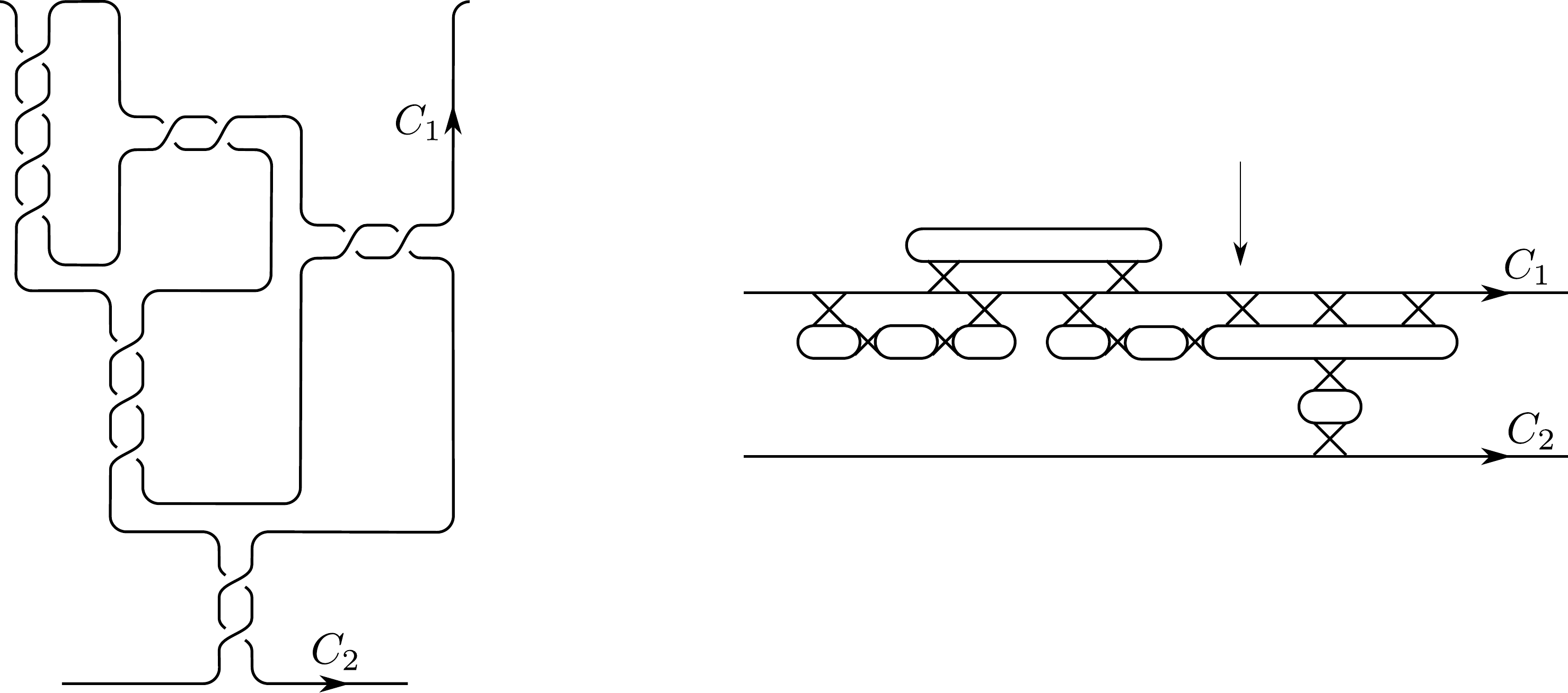}
\caption{A Type II(ii) attachment. Left: The tangle $71/173=A(-2,-2,-3,+2,-4)$ with Seifert Parity 1 and even $b_1^j$ ; Right: The realization of the tangle as an interlocked R-patterns attached to $C_1$ with exception (ii). The arrow points to the crossing that was borrowed to create the type (ii) exception. 
\label{TypeII(ii)attachment}}
\end{figure} 

\noindent
\underline{Class M2 case.} Here every tangle $A_j$ is of Seifert Parity 2 and $e=0$. As in the Class M1 case $b_1^j<0$ for all $1\le j\le k$. Overall this case is quite similar to the M1 Case. $L$ can be constructed from a Type M2 strong base link diagram $M_2$ with $k$ strings attached where all crossings in the strings are negative. More precisely, if $b_1^j$ is odd, then $b_2^j$ must be negative (in order for $A_j$ to have Seifert Parity 2), and the crossings in $b_1^j$ and $b_2^j$ smooth horizontally (similar to the case shown in Figure \ref{TypeII(ii)attachment}).  In this case the $j$-th string contains $|b_1^j|$ Seifert circles and is of length $|b_1^j|+1$. It contains $|b_1^j|$ lone crossings (corresponding to the crossings in $b_1^j$), and the Seifert circle attached to $C_1$ shares $|b_2^j|+1$ negative crossings with $C_1$. Since the resulting Type M2 link diagram remains a strong base link diagram with or without this additional crossing between $C_1$ and the Seifert circle in the $j$-th string attached to it, the condition of Theorem \ref{TypeD2_theorem} is met. Thus this is a valid Type M2 string, and in this case the rest of the tangle is attached to the top huge Seifert circle $C_1$ as Type II (ii), where the added crossing is ``borrowed" by the pattern. On the other hand, if $b_1^j$ is even, then $b_2^j$ must be positive and the crossings in $b_2^j$ smooth vertically (similar to the case shown in Figure \ref{TypeII(i)attachment}). 
In this case the $j$-th string contains exactly $|b_1^j|$ negative lone crossings, a valid Type M2 string. It is easy to see in this case that the rest of the tangle is attached to the top huge Seifert circle $C_1$ as Type II (i). Thus a Class M2 Montesinos link diagram remains a base link diagram.

\noindent
\underline{Class B case.} In this case at least one tangle is of Seifert Parity 3, there are no tangles of Seifert Parity 1 and the crossings in $e$ (if there are any) are positive and are smoothed vertically. Further more, the two long strands in the diagram belong to the same huge Seifert circle and the strands entering and exiting consecutive tangles of Seifert Parity 2 also belong to the same (large or huge) Seifert circles. In this case $L$ can be constructed from a Type B link diagram (denoted by $D_B$) with Type I attachments as follows. To construct $D_B$ we at first obtain the Seifert circles of $D_B$ by replacing each tangle with the tangle with two simple, no-intersection strands that match its Seifert Parity as given in Figure \ref{decomp}, and smooth the crossings in $e$. Thus the total number of Seifert circles in $D_B$ equals the number of crossings in $e$ plus the number of tangles with Seifert Parity 3. At this stage of the construction $D_B$ contains no crossings, for an example see the class B Montesinos link $M(17/44,7/10,19/26,2)$ in Figure \ref{classBconstructionone}.
Now we will add crossings starting with $e$ lone crossings to $D_B$ (these might not be the only lone crossings in $D_B$).

\begin{figure}[htb!]
\includegraphics[scale=.3]{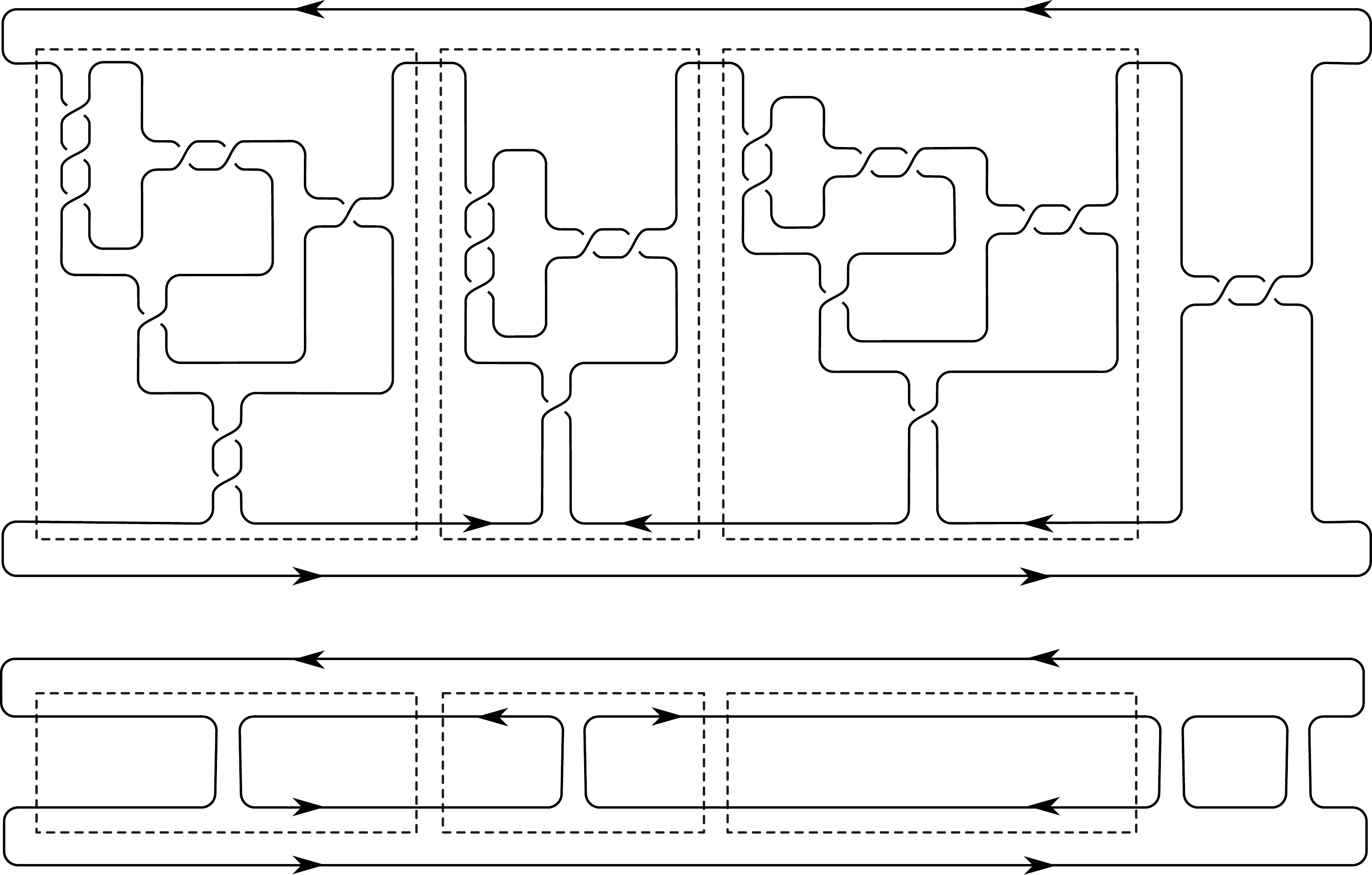}
\caption{Top: The class B Montesinos link $M(17/44,7/10,19/26,2)$. The first two tangles on the left  are of Seifert parity 3 and the third tangle is of Seifert parity 2. Bottom: The initial construction of $D_B$ with no crossings.
\label{classBconstructionone}}
\end{figure} 

This leaves the case of a tangle $A_j$  of Seifert Parity 3 for a detailed discussion. In this case $b_1^j>0$ and $b_2^j>0$, and in order for this to be a Seifert Parity 3 tangle the Seifert circle on the right of the tangle only touches a single crossing of  $b_2^j$ and all crossings of $b_1^j$ (that is the crossings in $b_1^j$ smooth vertically and the crossings in $b_2^j$ smooth horizontally). If $b_2^j=1$, then the strands after smoothing the crossings in $b_1^j$ and $b_2^j$ belong to different Seifert circles of $D_B$ if there is more than one tangle of Seifert Parity 3, and they belong to the single huge Seifert circle if there is exactly one tangle of Seifert Parity 3. The two strands share $b_1^j+b_2^j=b_1^j+1\ge 2$ crossings and the rest of the tangle can be attached to the large (or huge) Seifert circle on the left side via a Type II (i) attachment. On the other hand, if $b_2^j>1$, then we also place $b_1^j+1\ge 2$ crossings between the two corresponding Seifert circles in $D_B$ (created using the two simple strands that define the Seifert Parity of $A_j$), as the rest of the tangle (other than the portion containing $b_1^j$) is attached to the large (or huge) Seifert circle on the left side via a Type II (ii) attachment, which would borrow the added crossing back, recovering the original diagram, see Figure \ref{tangleRpatternwithexception}. In Figure \ref{classBconstructiontwo} we continue the construction of $D_B$ that was started in Figure \ref{classBconstructionone}. The two left most tangles are of Seifert parity 3. For the first we add $|b_1^1+1|$ crossings since $b_2^1=1$ and for the second we add $|b_1^2|$ crossings since $b_2^2>1$. At this stage $D_B$ is a Type B link diagram.

\begin{figure}[htb!]
\includegraphics[scale=.45]{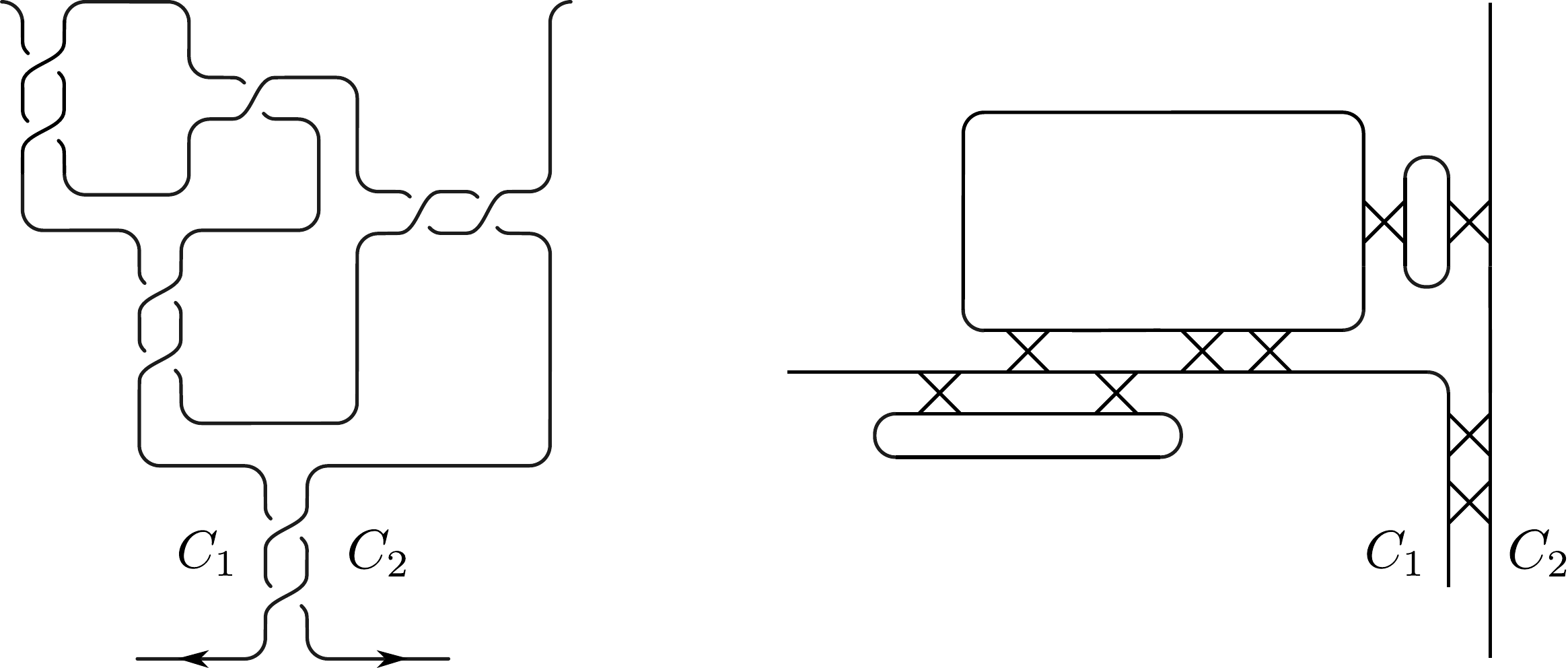}
\caption{Left: A standard drawing of the rational tangle $19/46 = A(2,2,2,1,2)$ with its Seifert circles; Right: Redrawn as an interlocked R-pattern with exception as displayed in Figure \ref{exceptions} on the right.}
\label{tangleRpatternwithexception}
\end{figure}

\begin{figure}[htb!]
\includegraphics[scale=.3]{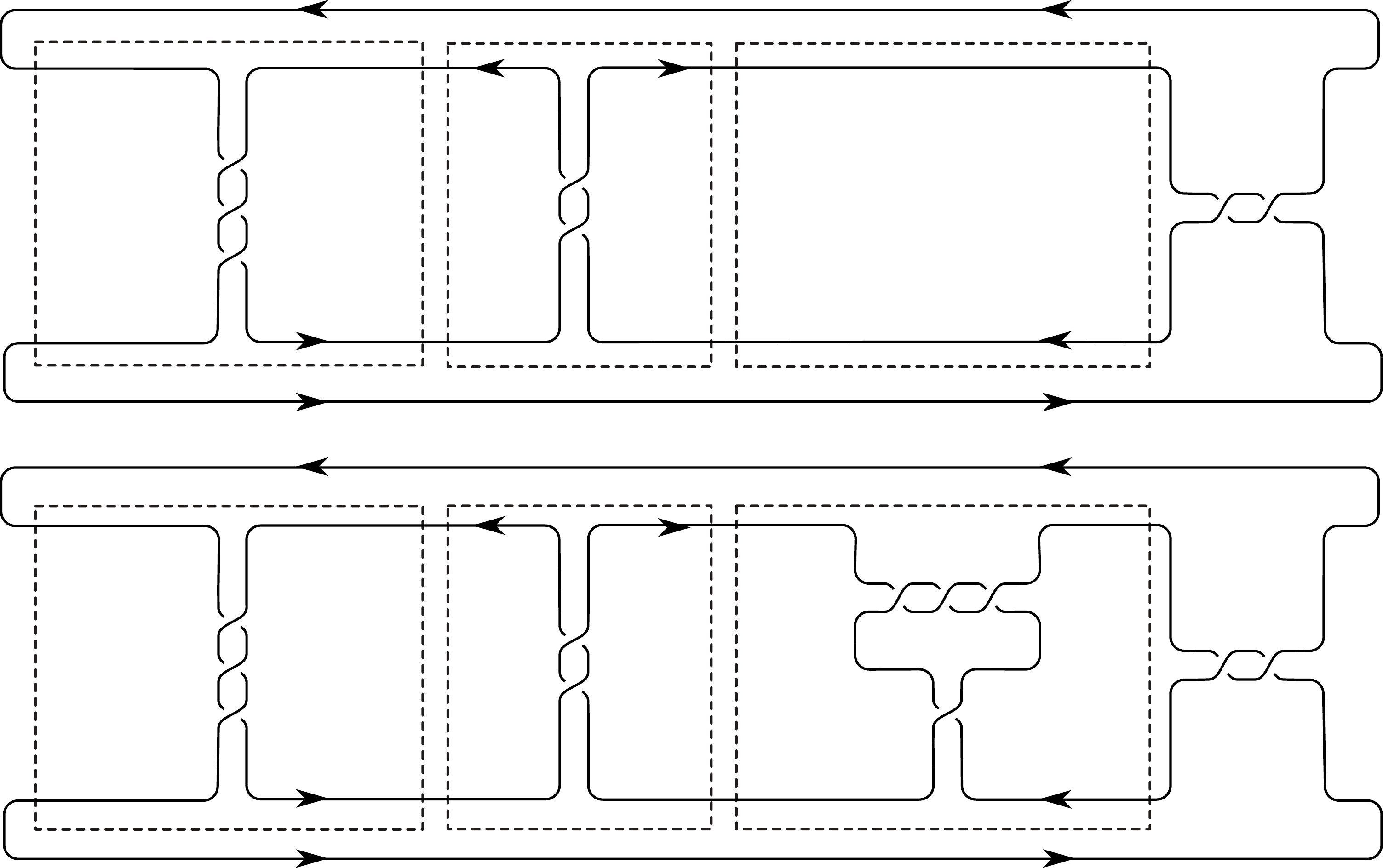}
\caption{The continuation of the construction of $D_B$ started in Figure \ref{classBconstructionone}.
Top: $D_B$ with $e$, $|b_1^1+1|$ and $|b_1^2|$ crossings added. A this stage $D_B$ is a Type B link diagram.  Bottom: A type I attachments is added to $D_B$ for the third tangle of Seifert parity 2.
\label{classBconstructiontwo}}
\end{figure} 

In the case of a tangle of Seifert Parity 2, the discussion  is almost identical to the Class M2 case discussed above. If $b_1^j$ is even we add a string of $|b_1^j|$ lone negative crossings to the large or huge Seifert circle that contains the Seifert Parity 2 tangle via a type I attachment. The rest of the tangle will become a Type II(i) attachment.  If If $b_1^j$ is odd we add a string of length $|b_1^j|+1$ consisting of $|b_1^j|$ negative lone crossings and is attached via $|b_2^j|+1$ crossings to the large or huge Seifert circle via a type I attachment. The rest of the tangle will become a Type II(ii) attachment. Figure \ref{classBconstructiontwo} shows that a string with $|b_1^3|$ Seifert circles with $|b_2^3|+1$ crossing attached at the top. Now the construction of $D_B$ is complete and the rest of the diagram can be created using Type II attachments. Figure \ref{classBconstructionthree} shows the complete diagram where three interlocked pattern we used to reconstruct the three tangles.

\begin{figure}[htb!]
\includegraphics[scale=.3]{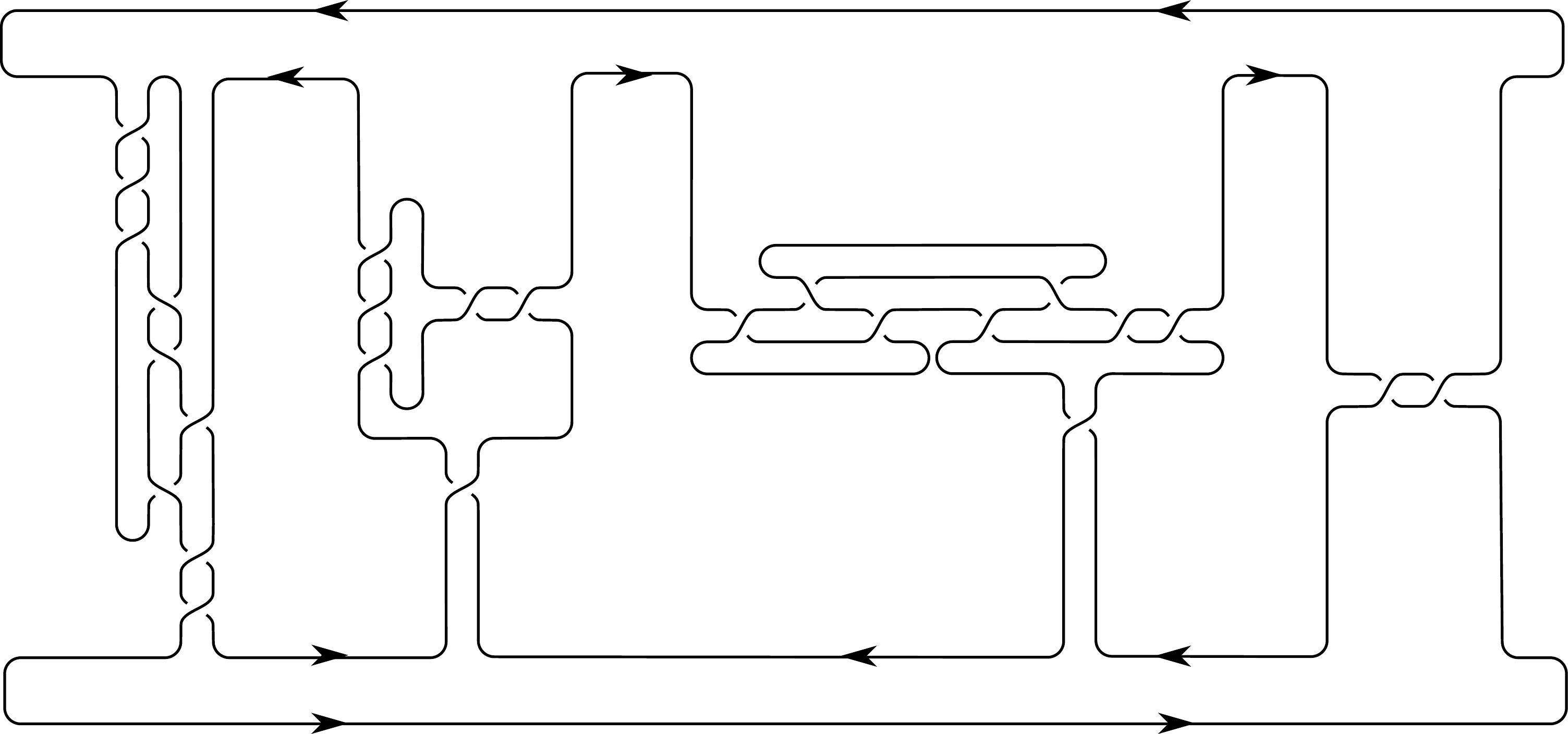}
\caption{The continuation of the construction of $D_B$ started in Figure \ref{classBconstructionone}.
For the first tangle a Type II(i) attachment, for the second a Type II(ii) attachment and for the third a Type II(ii) attachment is used.
\label{classBconstructionthree}}
\end{figure} 

Thus,
by the result on R-patterns from the Section \ref{s4} (Theorems \ref{TypeD_theorem}, \ref{TypeD_theoremsingleR}  and \ref{TypeD2_theorem}), we have proved the following theorem.

\begin{theorem}\label{Montesinos_theorem}
An alternating Montesinos link $L=M(\beta_1/\alpha_1,\ldots, \beta_k/\alpha_k,e)$ is a strong base link diagram. In particular, we have $\textbf{b}(L)=(E(L)-e(L))/2+1=s(L)-r(L)$, where $s(L)$ is the number of Seifert circles in the normal standard diagram of $L$ and $r(L)=r^-(L)+r^+(L)$ is the reduction number of the diagram.
\end{theorem}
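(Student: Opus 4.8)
The plan is to prove the theorem by invoking the classification of alternating Montesinos diagrams into the three classes M1, M2 and B already established above, and by exhibiting each class as the result of attaching interlocked R-patterns (Type II attachments, possibly with exceptions) to a strong base link diagram of Type M1, Type M2 or Type B. Since the building blocks $M_1$, $M_2$ and $D_B$ are connected sums of Type A, B and M diagrams, they are strong base link diagrams by the remark following Definition \ref{strong_definition}; the attachment theorems then propagate the strong base property to $L$ itself, so that (\ref{E1}) and (\ref{e1}) hold for $L$.

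First I would verify that the Seifert parity analysis of Figure \ref{decomp} places $L$ in exactly one of the three classes, and that within each class the orientation and parity constraints force each tangle $A_j(b_1^j,\ldots,b_{2q_j+1}^j)$ to be realized either as a Type II(i) attachment (when the leading entry $b_1^j$ is absorbed by a vertical smoothing) or as a Type II(ii) attachment (when a crossing must be ``borrowed'' from the anchor circle). For Class M1 and Class M2 the whole diagram is $M_1$ (respectively $M_2$) with one such attachment per tangle, so the conclusion follows by applying Theorem \ref{TypeD_theorem} and Theorem \ref{TypeD2_theorem} once for each tangle, after checking in the exception (ii) case that deleting the borrowed crossing again yields a strong base diagram $D_b^\p$ (this is precisely the hypothesis on $D_b$ and $D_b^\p$ in Theorem \ref{TypeD2_theorem}). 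For Class B, I would first build $D_B$ as a Type B diagram---placing the leading crossings of the Seifert Parity 3 tangles together with the $e$ half-twists---using Theorems \ref{cycle_theorem} and \ref{connectedsumtheorem}, then attach the remaining tangle content by Type I attachments (Theorem \ref{TypeI_theorem}) and by interlocked R-patterns (Theorems \ref{TypeD_theorem}, \ref{TypeD_theoremsingleR} and \ref{TypeD2_theorem}).

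Once $L$ is known to be a base link diagram, the braid index formula is immediate. By the discussion following the definition of a base link diagram, the MWF inequality gives $s(L)-r(L)=(E(L)-e(L))/2+1\le \textbf{b}(L)$, while the definition of the reduction number gives $s(L)-r(L)\ge \textbf{b}(L)$; hence all three quantities coincide. The reduction number $r(L)=r^-(L)+r^+(L)$ is read off additively from the R-pattern reduction counts established in Section \ref{s4}, which is what the ``furthermore'' clauses of those attachment theorems supply.

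The hard part will be the bookkeeping in Class B, where Type I attachments and Type II attachments with both kinds of exceptions coexist on the same huge Seifert circle, and one must confirm that the interlocking requirement and the borrowed-crossing convention are mutually compatible across adjacent tangles of differing Seifert parity. In particular, verifying that each Seifert Parity 3 tangle contributes a valid sub-pattern so that $D_B$ really is of Type B, and that the exception (ii) hypothesis of Theorem \ref{TypeD2_theorem}---that both $D_b$ and $D_b^\p$ are strong---holds at every stage, is where the care is needed; the M1 and M2 cases are comparatively routine once the parity dictionary between the signed vector entries $b_m^j$ and the attachment type is fixed.
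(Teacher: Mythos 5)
Your proposal follows essentially the same route as the paper: classify $L$ into Classes M1, M2 and B via the Seifert parity of the tangles, realize each class as a strong base diagram of Type M1, M2 or B with the tangles attached as Type I strings and interlocked R-patterns (using exception (i) or (ii) according to the parity of $b_1^j$), and then invoke Theorems \ref{TypeI_theorem}, \ref{TypeD_theorem}, \ref{TypeD_theoremsingleR} and \ref{TypeD2_theorem} together with the MWF argument from Section \ref{s3} to get $\textbf{b}(L)=s(L)-r(L)$. The points you flag as needing care --- verifying the Type B skeleton $D_B$ and checking that both $D_b$ and $D_b^\p$ are strong in the exception (ii) case --- are exactly the checks the paper carries out in its Class B and Class M1/M2 discussions.
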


\begin{remark}\label{braidindexusingreduction} 
{\em  For the example in Figures \ref{classBconstructionone}, \ref{classBconstructiontwo} and \ref{classBconstructionthree} there are 12 Seifert circles in the original diagram. From the first tangle we obtain $r^-(A_1)=1$, for the second $r^+(A_1)=1$, while the third does not contribute any increase in reduction numbers. The completed class $B$ link diagram $D_B$ has a reduction number $r^+(D_B)=1$. Thus $r(L)=r^-(L)+r^+(L)=1+2=3$ and we can redraw the diagram of the Montesinos link with $12-3=9$ Seifert circles. A computation of the HOMFLY polynomial yields that $E(L)=8$ and $e(L)=-8$ and we obtain a braid index of $b(L)=9$, as claimed by Theorem \ref{Montesinos_theorem}.}
\end{remark}

Theorem \ref{Montesinos_theorem} allows us to derive an explicit formula for the braid index of $L$ as the difference between $s(L)$ and $r(L)$, since we can compute $s(L)$ and $r(L)$ by computing the contributions of the individual tangles to $s(L)-r(L)$. We will spend the rest of this section on this task.

\noindent
\underline{Formulation of the braid index for an alternating Montesinos link.} Let $i=1$, 2 or 3 be the Seifert Parity type of $A_j$ and let $\Delta_i(A_j)$ be the contribution of the medium and small Seifert circles in $A_j$ to $s(L)-r(L)$. 

\noindent
The case of Seifert Parity 2. We will first determine $\Delta_i(A_j)$ when $A_j$ is of Seifert Parity 2. In this case the denominator $D(A_j)$ (see Figure \ref{tangle}) is a standard two bridge diagram as discussed in Subsection \ref{twobridgelinks} and $b_1^j<0$. Since we are not counting the large Seifert circle $C$, using Theorem \ref{2bridgetwo} we have:

\begin{eqnarray*}
\Delta_2(A_j)&=&(2+\sign(b^j_1)+\sign(b^j_{2q_j+1}))/4+\sum_{b^j_{2m}>0,1\le m\le q_j}b^j_{2m}/2+\sum_{b^j_{2m+1}<0,0\le m\le q_j}|b^j_{2m+1}|/2\\
&=&(1+\sign(b^j_{2q_j+1}))/4+\sum_{b^j_{2m}>0,1\le m\le q_j}b^j_{2m}/2+\sum_{b^j_{2m+1}<0,0\le m\le q_j}|b^j_{2m+1}|/2.
\end{eqnarray*}

\noindent
The case of Seifert Parity 1. If $A_j$ is of Seifert Parity 1, then $b^j_{1}<0$ but we cannot take the denominator directly (due to the orientations of the strands inherited from $L$). However if we add one more (negative) crossing to the crossings in $b^j_{1}$, then the orientation of the bottom strand is reversed and the result is a tangle $A_j^\p$ of Seifert Parity 2. We can compare $A_j$ with $A_j^\p$. In the above we just computed a formula that can be used to obtain $\Delta_2(A_j^\p)$. We note that $A_j^\p$ has the same reduction number as $A_j$ but has one more Seifert circle than $A_j$ does. Thus the contribution of the medium and small Seifert circles in $A_j$ to $s(L)-r(L)$ is:

\begin{eqnarray*}
\Delta_1(A_j)&=&-1+\Delta_2(A_j^\p)= -1+(1+\sign(b^j_{2q_j+1}))/4+\sum_{b^j_{2m}>0,1\le m\le q_j}b^j_{2m}/2\\
&+&(|b^j_{1}|+1)/2+\sum_{b^j_{2m+1}<0,1\le m\le q_{j}}|b^j_{2m+1}|/2\\
&=&(-1+\sign(b^j_{2q_j+1}))/4+\sum_{b^j_{2m}>0,1\le m\le q_j}b^j_{2m}/2+\sum_{b^j_{2m+1}<0,0\le m\le q_j}|b^j_{2m+1}|/2.
\end{eqnarray*}

\noindent
The case of Seifert Parity 3. Finally, if $A_j$ is of Seifert Parity 3, then $D(A_j)$ is a two bridge link diagram $K(A_j)$ (in its normal standard form) with $b^j_{1}>0$ and $b^j_{2}>0$. Notice that in this case $L$ must be of Class B. Furthermore, the NW and SW strands and and the NE and SE strands belong to large (or huge) Seifert circles in $D_B$. Then the rest of the Seifert circles form an interlocked R-pattern with exception (i) if $b_2^j=1$ or (ii) if $b_2^j>1$ (see the discussion of the class B case in the proof of Theorem \ref{Montesinos_theorem}). If we take the denominator of the tangle, then we obtain a (minimum) rational link diagram in which the large Seifert circle (the one containing the NW-SW strand) does not contain the NE-SE strand. Thus, the contribution of the medium and small Seifert circles to $s(L)-r(L)$ is the same as the braid index of the above denominator minus 2 since we are only counting the contribution of the medium and small Seifert circles (which do not include the ones containing the  NW-SW strand and the NE-SE strand). Thus using Theorem \ref{Montesinos_theorem} we have:

\begin{eqnarray*}
\Delta_3(A_j)&=&-1+(2+\sign(b^j_1)+\sign(b^j_{2q_j+1}))/4+\sum_{b^j_{2m}>0,1\le m\le q_j}b^j_{2m}/2+\sum_{b^j_{2m+1}<0,0\le m\le q_j}|b^j_{2m+1}|/2\\
&=&(-1+\sign(b^j_{2q_j+1}))/4+\sum_{b^j_{2m}>0,1\le m\le q_j}b^j_{2m}/2+\sum_{b^j_{2m+1}<0,0\le m\le q_j}|b^j_{2m+1}|/2.
\end{eqnarray*}

In the case when $L$ is of Class B, let $\eta$ be the number of Seifert Parity 3 $A_j$'s in $L$, then $D_B$ consists of a cycle of Seifert circles of length $2n=\eta+e$, where $e$ is the number of lone crossings. Recall that by the construction of $D_B$, the lone crossings in $D_B$ are precisely those in $e$. Combining the above and Theorems \ref{cycle_theorem}, \ref{multipath_theorem1}, \ref{TypeI_theorem} and \ref{TypeD_theorem}, we obtain the following complete formulation for the braid index of an alternating Montesinos link (presented in a normal standard diagram).

\begin{theorem}\label{Montesinos_formula}
Let $L=M(\beta_1/\alpha_1,\ldots, \beta_k/\alpha_k,\delta)=M(A_1,A_2,\ldots, A_k,e)$ be an alternating Montesinos link with a normal standard diagram and  the signed vector $(b_{1}^j,b^j_2,...,b_{2q_j+1}^j)$ for $A_j$, we have
\begin{eqnarray}
\textbf{b}(L)&=&2+\sum_{1\le j\le k}\Delta_1(A_j)\ \rm{if}\ L\ \rm{is\ of\ Class\ M1};\label{Mformula1}\\
\textbf{b}(L)&=&1+\sum_{1\le j\le k}\Delta_2(A_j) \ \rm{if}\ L\ \rm{is\ of\ Class\ M2};\label{Mformula2}\\
\textbf{b}(L)&=&\Delta_0(L)+\sum_{A_j \in \Omega_2}\Delta_2(A_j)+\sum_{A_j \in \Omega_3}\Delta_3(A_j) \ \rm{if}\ L\ \rm{is\ of\ Class\ B},\label{Mformula3}
\end{eqnarray}
where $\Omega_2$, $\Omega_3$ are the sets of Seifert Parity 2 and Seifert Parity 3 $A_j$'s respectively, $\Delta_0(L)=\eta+e-\min\{(\eta+e)/2-1,e\}$ and $\eta=\vert \Omega_3\vert$. 
\end{theorem}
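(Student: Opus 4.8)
The plan is to reduce everything to the identity $\textbf{b}(L)=s(L)-r(L)$ furnished by Theorem \ref{Montesinos_theorem}, and then to evaluate the right-hand side by a bookkeeping argument that splits $s(L)-r(L)$ into a contribution coming from the \emph{base} of the diagram (the huge and large Seifert circles together with the lone crossings in $e$) and a contribution $\Delta_i(A_j)$ coming from the medium and small Seifert circles of each tangle $A_j$. The crucial structural input is already in place: by the construction carried out in the proof of Theorem \ref{Montesinos_theorem}, the link $L$ is assembled from a base diagram (of Type M1, M2, or B according to the class of $L$) by realizing each tangle as an interlocked R-pattern attachment, and Theorems \ref{TypeD_theorem}, \ref{TypeD_theoremsingleR} and \ref{TypeD2_theorem} guarantee that the reduction numbers add over these attachments. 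Consequently both $s(L)$ and $r(L)$ decompose additively over the base and the individual tangles, so that $s(L)-r(L)$ equals the base contribution plus $\sum_j \Delta_i(A_j)$.

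First I would record that the three quantities $\Delta_1(A_j)$, $\Delta_2(A_j)$, $\Delta_3(A_j)$ have already been computed in the paragraphs immediately preceding the theorem, each by applying the two-bridge formula of Theorem \ref{2bridge_theorem} to the denominator $D(A_j)$ and then subtracting the large Seifert circles that belong to the base rather than to the interior of the tangle (one circle in the Seifert Parity $1$ and $2$ cases, two in the Seifert Parity $3$ case, with the Parity $1$ computation first converting $A_j$ to a Parity $2$ tangle $A_j^\p$). Since $\Delta_i(A_j)$ is by definition the net contribution $(\text{medium}+\text{small circles})-(\text{reduction})$ of $A_j$ alone, these subtractions are exactly what prevents double counting against the base, and the tangle terms then require no further work.

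It therefore remains to evaluate the base contribution $s_{\text{base}}-r_{\text{base}}$ in each class. For Class M1 the base consists of the two huge Seifert circles carrying the top and bottom long strands, the $e$ crossings being multiple negative crossings between them that create no new Seifert circles and admit no reduction; hence the base contributes $2-0=2$, giving $(\ref{Mformula1})$. For Class M2 the base again has two huge circles and $e=0$, but the Type M2 geometry eliminates one additional Seifert circle (Figure \ref{caseM2}), i.e. the assembled reduction number exceeds the naive sum $\sum_j(k_j-1)$ of the per-tangle reductions by exactly one; attributing this single global reduction to the base yields $2-1=1$ and hence $(\ref{Mformula2})$. For Class B the base $D_B$ is the Type B cycle of Seifert circles of length $2n=\eta+e$ with exactly $e$ lone crossings, so Theorem \ref{cycle_theorem} gives its reduction number as $\min\{e,n-1\}=\min\{e,(\eta+e)/2-1\}$; thus the base contributes $(\eta+e)-\min\{e,(\eta+e)/2-1\}=\Delta_0(L)$, while the Parity $2$ and Parity $3$ tangles contribute $\Delta_2(A_j)$ and $\Delta_3(A_j)$ respectively, giving $(\ref{Mformula3})$.

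The main obstacle I anticipate is the careful verification of the two accounting identities just used, since all the analytic content (the values of $E(L)$ and $e(L)$, the additivity of reduction numbers) has been supplied by earlier results and only the combinatorial bookkeeping remains. Specifically, the Class M2 case requires showing that the one extra reduction forced by the Type M2 structure is genuinely global and must be charged to the base rather than to any single tangle, independently of the number $k$ of tangles; and the Class B case requires confirming that the $\eta$ large circles produced by the Seifert Parity $3$ tangles, together with the $e$ lone crossings from the horizontal twists, really do constitute the Type B cycle of length $\eta+e$ so that Theorem \ref{cycle_theorem} applies with precisely these parameters. Once these identities are pinned down, combining the base contribution with the already-computed $\Delta_i(A_j)$ terms produces the three formulas directly.
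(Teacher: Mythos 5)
Your proposal is correct and follows essentially the same route as the paper: it invokes Theorem \ref{Montesinos_theorem} to reduce the claim to evaluating $s(L)-r(L)$, uses the additivity of reduction numbers over the R-pattern attachments to split this into a base contribution plus the per-tangle terms $\Delta_i(A_j)$ already computed before the theorem, and then evaluates the base as $2$, $2-1=1$, and $(\eta+e)-\min\{e,(\eta+e)/2-1\}=\Delta_0(L)$ in the three classes exactly as the paper's preceding discussion does. The only difference is presentational: you make the bookkeeping explicit where the paper leaves it as "combining the above."
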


We end this section with a few examples. These are relatively small Montesinos knots whose HOMFLY polynomials (hence $E(L)-e(L))/2+1$) can be computed directly to verify the results. 

\medskip
\begin{example}{\em Let $K=12a_{304}=M(7/19,1/3,1/2,0)$ in Figure \ref{Me1}. This is an alternating Montesinos link of Class B with $e=0$ and $\eta=2$. $7/19=(2,1,2,1,1)$ has a signed vector of $(2,1,-2,1,1)$ so it is of Seifert Parity 3, $1/3=(3)$ has a signed vector of $(3)$ so it is of Seifert Parity 3, $1/2=(2)$ has a signed vector of $(-2)$ so it is of Seifert Parity 2. We obtain $\Delta_3(7/19)=1+1=2$, $\Delta_3(1/3)=0$, $\Delta_2(1/2)=1$ (here we count $\sign(b^j_1)=\sign(b^j_{2q_j+1})=-1$) and $\Delta_0(L)=2-0=2$.
By (\ref{Mformula3}) we obtain $\textbf{b}(12a_{304})=2+1+2=5$.}
\end{example}

\begin{figure}[htb!]
\includegraphics[scale=.3]{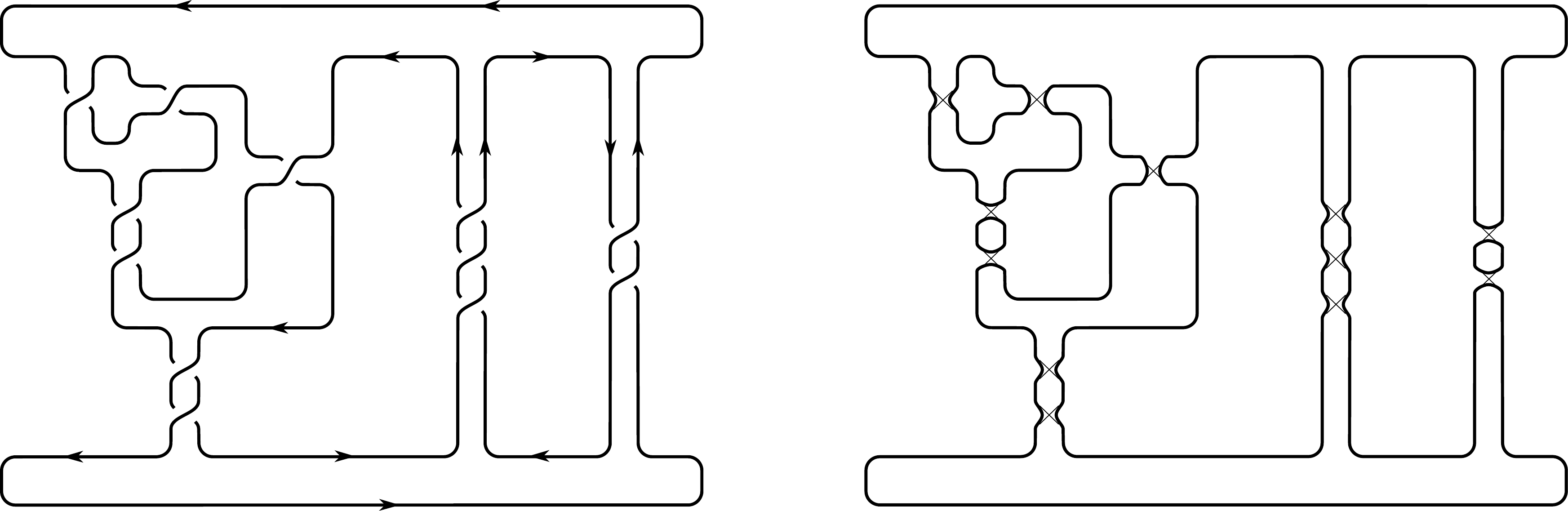}
\caption{Left: The Montesinos knot $K=12a_{304}=M(7/19,1/3,1/2,0)$ in its normal standard form; Right: The Seifert circle decomposition of $K$.}
\label{Me1}
\end{figure}

\medskip
\begin{example} {\em Let $K=12a_{252}=M(1/4,3/5,1/3,1)$ in Figure \ref{Me3}. This is also a Montesinos link of Class B with $e=1$ and $\eta=3$. $1/4=(4)$ has a signed vector of $(4)$ and $\Delta_3(1/4)=0$, $3/5=(1,1,2)$ has a signed vector of $(1,1,-2)$ and $\Delta_3(3/5)=-1/2+1/2+1=1$, $1/3=(3)$ has a signed vector of $(3)$ and $\Delta_3(1/3)=0$.
By (\ref{Mformula3}) we obtain $\textbf{b}(12a_{252})= 1+(4-1)=4$.}
\end{example}

\begin{figure}[htb!]
\includegraphics[scale=.4]{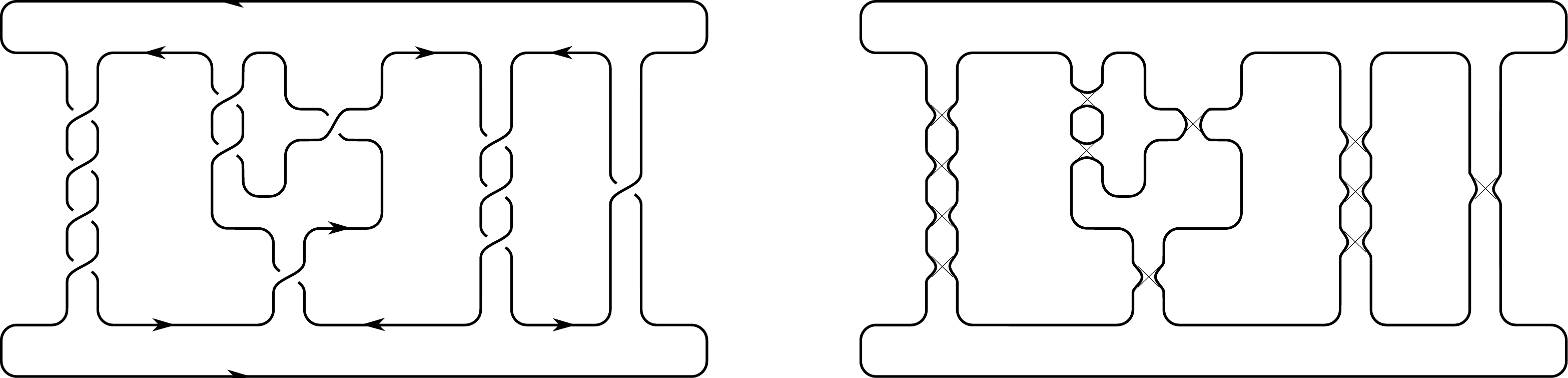}
\caption{Left: The Montesinos knot $K=12a_{252}=M(1/4,3/5,1/3,1)$ in its normal standard form; Right: The Seifert circle decomposition of $K$.}
\label{Me3}
\end{figure}

\medskip
\begin{example}{\em  Let $L=M(12/19,2/3,2)=b(188,79)$ in Figure \ref{Me4}. This is a Montesinos link of Class M1 with $e=2$ and the two tangles are of Seifert Parity 1: $12/19=(1,1,1,2,2)$ has a signed vector of $(-1,1,1,2,2)$ and $2/3=(1,1,1)$ has a signed vector of $(-1,1,1)$. We have $\Delta_1(12/19)=0+3/2+1/2=2$, $\Delta_1(2/3)=0+1/2+1/2=1$.
By (\ref{Mformula1}) we obtain $\textbf{b}(b(188,79))=2+2+1=5$. Since there are only two rational tangles this link is actually a two bridge link. We can redraw the diagram to see that the vector is $(2, 2, 1, 1, 1, 2, 1, 1, 1)$ which gives the two bridge link $b(188,79)$.
The signed vector is $(2, 2, 1, 1, -1, -2, -1, 1, 1)$ and applying Formula (\ref{2bridgeformula}) also yields $1+4/4+(1+1/2+1/2)+(1/2+1/2)=5$, as expected.}
\end{example}

\begin{figure}[htb!]
\includegraphics[scale=.3]{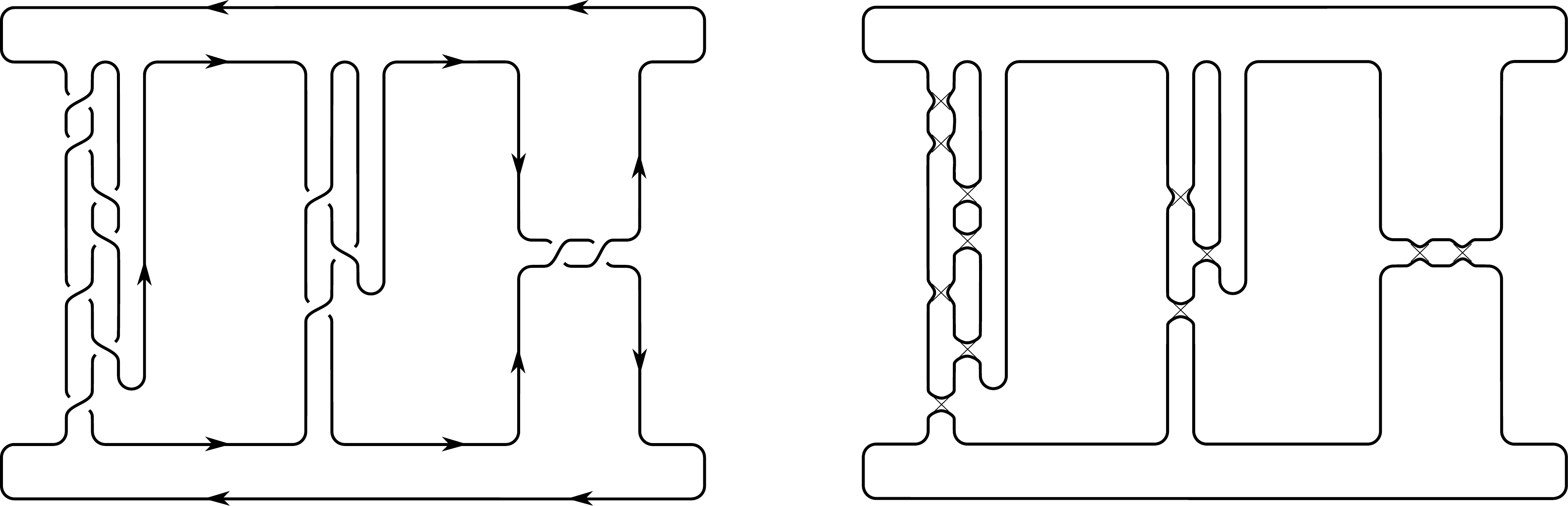}
\caption{Left: The Montesinos knot $K=M(12/19,2/3,2)=b(188,79)$ in its normal standard form; Right: The Seifert circle decomposition of $K$.}
\label{Me4}
\end{figure}

\medskip
\begin{example}{\em  Let $K=12a_{83}=M(12/19,2/3,1/2)$ in Figure \ref{Me5}. This is a Montesinos link of Class M2 with $e=0$ and three tangles of Seifert Parity 2: $12/19=(1,1,1,2,2)$ has a signed vector of $(-1,-1,-1,2,-2)$, $2/3=(1,1,1)$ has a signed vector of $(-1,-1,-1)$ and $1/2=(2)$ has a signed vector of $(-2)$. We have $\Delta_2(12/19)=0+1+2=3$, $\Delta_2(2/3)=0+0+1=1$ and $\Delta_2(1/2)=0+0+1=1$.
By (\ref{Mformula2}) we obtain $\textbf{b}(12a_{83})=1+3+1+1=6$.}
\end{example}

\begin{figure}[htb!]
\includegraphics[scale=.3]{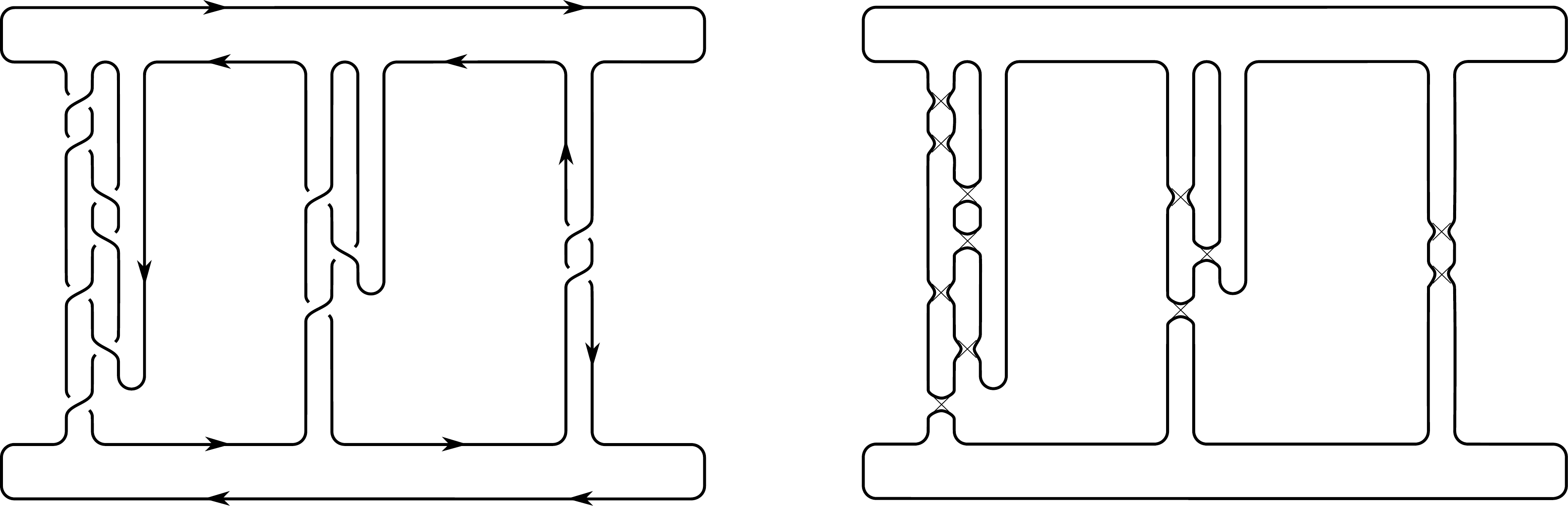} 
\caption{Left: The Montesinos knot $K=12a_{83}=M(12/19,2/3,1/2)$ in its normal standard form; Right: The Seifert circle decomposition of $K$.}
\label{Me5}
\end{figure}

\medskip
\begin{example}{\em  Let $L=M(17/44,7/10,19/26,2)$ be the earlier example shown in Figure \ref{classBconstructionone}. This is a Montesinos link of Class B with $e=2$ and two tangles of Seifert Parity 3 and one tangle of Seifert Parity 2: $17/44=(2,1,1,2,3)$ has a signed vector of $(2,1,-1,-2,-3)$, $7/10=(1,2,3)$ has a signed vector of $(1,2,3)$ and $19/26=(1,2,1,2,2)$ has a signed vector of $(-1,-2,-1,2,-2)$. We have $\Delta_0=3$, $\Delta_3(17/44)=2$, $\Delta_3(7/10)=1$ and $\Delta_2(19/26)=3$.
By (\ref{Mformula2}) we obtain $\textbf{b}(L)=9$ which agrees with Example \ref{braidindexusingreduction}.}
\end{example}

\section{Summary and future work}\label{s6}

In this paper we presented algorithms that allow the determination of the braid index of an oriented alternating link $L$ directly from a minimal diagram $D$ of the link $L$. We introduced several classes of link diagrams (with lone crossings) that can be constructed from alternating link diagrams without lone crossings. We show that for each of these link diagrams we can determine the maximum number of the (well known) Seifert circle reduction moves (see for example \cite{MP}). We show that the equality of the Morton-William-Frank inequality holds for the diagram obtained after the reduction of $r(D)$ Seifert circles. As applications of our methods and results, we derived a new formulation of the braid index for rational links based on minimum diagrams. We also show that the braid index of any alternating Montesinos link satisfies the equation $\textbf{b}(D)=s(D)-r(D)$. Using this formula, we are able to derive an explicit formula for computing the braid index of any alternating Montesinos link from a minimal diagram. We point out that the techniques in this paper can be used for even larger classes of alternating diagram, such as subfamilies of the arborescent knot family (or Conway algebraic knots) \cite{Bo}, and can be extended to some non-alternating link families as well. This will be a topic of future work of the authors.

\end{document}